\newtheorem{definition}{Definition}
\newtheorem{proposition}{Proposition}
\newtheorem{lemma}{Lemma}
\newtheorem{theorem}{Theorem}
\newtheorem{corollary}{Corollary}
\newtheorem{remark}{Remark}
\newtheorem{example}{Example}
\newcommand{\st}{{\ast}}
\newcommand{\up}{{\uparrow}}
\newcommand{\down}{{\downarrow}}
\newcommand{\seq}{\mathbin{\to}}
\newcommand{\osum}{\mathbin{:}}
\newcommand{\short}{\tilde{\mathbb{G}}}
\newcommand{\Short}{\mathbb{G}}
\newcommand{\dicot}{\tilde{\mathbb{G}}^0}
\newcommand{\dyadic}{\mathbb{D}}
\newcommand{\birth}{\tilde{\mathrm{b}}}
\newcommand{\relmiddle}[1]{\mathrel{}\middle#1\mathrel{}}
\newcommand{\eqlab}[2]{\overset{(\mathrm{#1})}{#2}}
\newcommand{\defeq}[1]{\overset{\mathrm{def}}{#1}}
\begin{document}
\title{The Game Value of Sequential Compounds of Integers and Stars}  
\author{Kengo Hashimoto}

\date{University of Fukui, E-mail: khasimot@u-fukui.ac.jp}

\maketitle

\begin{abstract}
A combinatorial game is a two-player game without hidden information or chance elements.
One of the major approaches to analyzing games in combinatorial game theory is to break down a given game position into a \emph{disjunctive sum} of multiple sub-positions, then evaluate the \emph{game value} of each component of the sum, and finally integrate these game values to find which player has a winning strategy in the whole position.
Accordingly, finding the game value of a given position is a major topic in combinatorial game theory.
The sequential compound proposed by Stromquist and Ullman is a combinatorial game consisting of two combinatorial games.
In the sequential compound of games $G$ and $H$, the players make moves on $G$ until $G$ is over, and then they play on $H$.
In this paper, we investigate the general properties of sequential compounds.
As the main result, we give the game values of sequential compounds of a finite number of \emph{integers} and \emph{stars},
which are basic and typical games in combinatorial game theory.
\end{abstract}

\section{Introduction}

A \emph{combinatorial game} is a two-player game without hidden information or chance elements; e.g., Chess, Go, and Checkers.
In a combinatorial game, two players, conventionally called \emph{Left} and \emph{Right}, take turns to make a move alternately.
If it is guaranteed that the winner is determined within a finite number of moves,
then exactly one of the two players has a winning strategy; that is, by repeating the appropriate moves, the player can win regardless of the opponent's moves.
The main interest of combinatorial game theory is to determine which player has a winning strategy in a given combinatorial game.

One of the major approaches to analyzing a combinatorial game is to break down a given game position $G$ into a ``sum'' of multiple sub-positions $G_1 + G_2 + \cdots + G_n$,
then analyze each component $G_i$ independently, and finally integrate the results for individual components to obtain the result for the whole position $G$.
A \emph{disjunctive compound} of combinatorial games, in which a player makes a move on exactly one component, is the most commonly studied ``addition'' of combinatorial games because positions of many combinatorial games are naturally decomposed into a disjunctive compound of multiple sub-positions.
For a given disjunctive compound, we can find which player has a winning strategy by finding the \emph{game value} of each component and summing them up by algebraic operations on game values.
Therefore, finding the game value of a given position is a major topic in combinatorial game theory.

There are also other types of addition: in a \emph{conjunctive compound}, a player plays on every component;
in a \emph{selective compound}, a player plays on one or more arbitrary number of components \cite{Smi66}.
The \emph{ordinal sum} $G \osum H$ is a non-commutative addition of two combinatorial games $G$ and $H$,
in which a player plays on exactly one of $G$ and $H$, and if the player played on $G$, then $H$ is discarded keeping only $G$.
In recent research \cite{CM23}, Clow and McKay studied ordinal sums of \emph{numbers}, which are basic and typical games in combinatorial game theory.

A \emph{sequential compound} \cite{SU93} is another non-commutative addition of two combinatorial games.
In the sequential compound $G \seq H$ of two combinatorial games $G$ and $H$, the players make moves on $G$ until $G$ is over, and then they play on $H$.
Stromquist and Ullman \cite{SU93} studied who has a winning strategy and the game values of the sequential compound mainly for the class of games called \emph{impartial games}, 
in which the two players share the same set of options for every position
(in contrast, games in which the options of both players are not necessarily same are called \emph{partizan games}).
Stewart \cite{Ste07} investigated sequential compounds of partizan games under a slightly different definition from \cite{SU93}.
There is almost no research other than the above that explicitly deals with sequential compounds.

In this paper, we investigate the general properties of sequential compounds of partizan combinatorial games.
Also, as the main result, we give the game values of sequential compounds of a finite number of \emph{integers} and \emph{stars},
which are basic and typical games in combinatorial game theory.
This enables us to find who has a winning strategy in a given disjunctive compound of sequential compounds of integers and stars.

This paper is organized as follows.
In Section \ref{sec:preliminaries}, we introduce the basic definitions and notation used in this paper, along with some known results in combinatorial game theory.
Our main results are described in Section \ref{sec:main}.
Then in Section \ref{sec:conclusion}, we summarize our result and state the future work.
To clarify the flow of discussion, we defer some of the proofs to the appendix.

%%%%%%%%%%%%%%%%%%%%%%%%%%%
\section{Preliminaries}
\label{sec:preliminaries}
%%%%%%%%%%%%%%%%%%%%%%%%%%%

In this section, we introduce the definitions and notation used in this paper.
By the conventions, we refer to an individual game position as a \emph{game} rather than a system of playable rules such as Chess and Go\footnote{Instead, a system of playable rules is referred to as a \emph{ruleset} in combinatorial game theory.}.

%%%%%%%%%%%%%%%%%%%%%%%%%%%
\subsection{Short Games}
%%%%%%%%%%%%%%%%%%%%%%%%%%%

In this paper, we consider only \emph{short games}, in which the game ends after a finite number of moves, and every position has at most a finite number of options.
We briefly introduce the basic definitions and some known results in combinatorial game theory.
We leave the proofs of the propositions and other detailed discussion to the textbooks such as \cite{ANW19, BCG18, Con00, HG16, Sie13}.

Let $\short$ be the set of all short games.
As a player makes a move in a game $G \in \short$, the game $G$ transitions to another game $G' \in \short$ closer to the end than $G$.
A game $G \in \short$ is represented by an ordered pair of the set $\mathcal{L} \subseteq \short$ of possible transitions by  Left's moves and the set $\mathcal{R} \subseteq \short$ of possible transitions by Right's moves.
The formal definition is as follows.

\begin{definition}
The \emph{zero game} $\llbracket 0 \rrbracket$ is defined as the pair of two empty sets, that is, 
\begin{equation}
\label{eq:ue9mokgt6foc}
\llbracket 0 \rrbracket \defeq{=} (\emptyset, \emptyset).
\end{equation}
For an integer $n \geq 0$, the set $\short_n$ is defined as
\begin{equation*}
\short_n \defeq{=}
\begin{cases}
\{\llbracket 0 \rrbracket\} &\,\,\text{if}\,\,n = 0,\\
\{(\mathcal{G}^L, \mathcal{G}^R) : \mathcal{G}^L, \mathcal{G}^R \subseteq  \short_{n-1}\} &\,\,\text{if}\,\,n \geq 1.\\
\end{cases}
\end{equation*}
A \emph{short game} (or \emph{game}) is an element of
\begin{equation*}
\tilde{\mathbb{G}} \defeq{=} \bigcup_{n \geq 0} \tilde{\mathbb{G}}_n.
\end{equation*}
\end{definition}

Intuitively, the zero game $\llbracket 0 \rrbracket$ is the end position in which neither player can make any more moves.
Also, $\short_n$ is the set of all games that reach the zero game in at most $n$ moves even if both players take turns not necessarily alternately (i.e., one player can make consecutive moves).

By the definition, we have
\begin{equation*}
\short_0 \subsetneq \short_1 \subsetneq \short_2 \subsetneq \cdots.
\end{equation*}
For $G \in \short$, the smallest integer $n \geq 0$ satisfying $G \in \short_n$ is called the \emph{formal birthday} of $G$ and denoted by $\birth(G)$.

A game $(\mathcal{G}^L, \mathcal{G}^R) \in \tilde{\mathbb{G}}$ is conventionally denoted as
\begin{equation*}
\left\{ \mathcal{G}^L \relmiddle| \mathcal{G}^R \right\} \quad \text{or} \quad
\left\{ G^L_1, G^L_2, \ldots, G^L_l \relmiddle| G^R_1, G^R_2, \ldots, G^R_r \right\},
\end{equation*}
where $\mathcal{G}^L = \{G^L_1, G^L_2, \ldots, G^L_l\}$ and $\mathcal{G}^R = \{G^R_1, G^R_2, \ldots, G^R_r\}$.

An element of $\mathcal{G}^L$ (resp.~$\mathcal{G}^R$) is called a \emph{Left option} (resp.~\emph{Right option}) of $G$.
Let $G^{\mathcal{L}}\defeq{=} \mathcal{G}^L$ and $G^{\mathcal{R}} \defeq{=} \mathcal{G}^R$ for $G = \left\{ \mathcal{G}^L \relmiddle| \mathcal{G}^R \right\} \in \short$.
Namely, $G^{\mathcal{L}}$ (resp.~$G^{\mathcal{R}}$) denotes the set of all Left (resp.~Right) options of $G$.
If $G, H \in \short$ are the same elements of $\short$, then we write $G \cong H$ instead of $G = H$.
A notation $G = H$ is reserved by another meaning as stated later, and $G = H$ and $G \cong H$ are strictly distinguished.

The two players take turns alternately until one of the players cannot make any more moves on the player's turn.
There are two conventions for determining the winner: \emph{normal play} and \emph{mis\`{e}re play}.
In normal (resp.~mis\`{e}re) play, the player who cannot move on the player's turn loses (resp.~wins).
The normal play convention is more actively studied because of its rich algebraic structure and partial order structure.
This paper also focuses on the normal play.
However, the discussion in this paper includes the case of mis\`{e}re play in the sense stated later in Remark \ref{rem:seq}.

For each game, it is determined which player has a winning strategy in the case where Left plays first and the case where Right plays first, respectively.
Namely, $\short$ is divided into a disjoint union $\short = \mathscr{N}^L \sqcup \mathscr{P}^R$,
where 
$\sqcup$ denotes a disjoint union of sets,
and $\mathscr{N}^L$ (resp.~$\mathscr{P}^R$) is the set of all games in which Left (resp.~Right) wins when Left plays first\footnote{The symbol $\mathscr{N}^L$ means that ``L''eft wins as the ``N''ext player. Also, $\mathscr{P}^R$ means that ``R''ight player wins as the ``P''revious player, that is, Right wins when Left plays first. These symbols are taken from \cite{Sie13}.}.
Similarly, we have a decomposition $\short = \mathscr{P}^L \sqcup \mathscr{N}^R$, where $\mathscr{P}^L$ (resp.~$\mathscr{N}^R$) is the set of all games in which Left (resp.~Right) wins when Right plays first.

Therefore, 
according to which player wins when Left plays first and when Right plays first,
the games are classified into four sets $\mathscr{L}, \mathscr{R}, \mathscr{N}, \mathscr{P}$ defined as follows.
\begin{itemize}
\item $\mathscr{L}$: regardless of who plays first, Left wins.
\item $\mathscr{R}$: regardless of who plays first, Right wins.
\item $\mathscr{N}$: the player who plays first (i.e., the Next player) wins.
\item $\mathscr{P}$: the player who plays second (i.e., the Previous player) wins.
\end{itemize}

The formal definition is as follows.

\begin{definition}
\label{def:outcome}
We define $\mathscr{P}^L, \mathscr{N}^L, \mathscr{P}^R, \mathscr{N}^R \subseteq \short$ as
the unique sets such that for any $G \in \short$, the following conditions (a)--(d) hold.
\begin{enumerate}[(a)]
\item $G \in \mathscr{P}^L$ if for any $G^R \in G^{\mathcal{R}}$, it holds that $G^R \in \mathscr{N}^L$.
\item $G \in \mathscr{N}^L$ if for some $G^L \in G^{\mathcal{L}}$, it holds that $G^L \in \mathscr{P}^L$.
\item $G \in \mathscr{P}^R$ if for any $G^L \in G^{\mathcal{L}}$, it holds that $G^L \in \mathscr{N}^R$.
\item $G \in \mathscr{N}^R$ if for some $G^R \in G^{\mathcal{R}}$, it holds that $G^R \in \mathscr{P}^R$.
\end{enumerate}
The sets $\mathscr{L}, \mathscr{R}, \mathscr{N}, \mathscr{P}$, called \emph{outcome classes}, are defined as follows.
\begin{eqnarray*}
\mathscr{L} &\defeq{=}& \mathscr{P}^L \cap \mathscr{N}^L,\\
\mathscr{R} &\defeq{=}& \mathscr{P}^R \cap \mathscr{N}^R,\\
\mathscr{P} &\defeq{=}& \mathscr{P}^L \cap \mathscr{P}^R,\\
\mathscr{N} &\defeq{=}& \mathscr{N}^L \cap \mathscr{N}^R.
\end{eqnarray*}
\end{definition}

Note that $\llbracket 0 \rrbracket \in \mathscr{P}^L \cap \mathscr{P}^R = \mathscr{P}$
since $\llbracket 0 \rrbracket$ satisfies Definition \ref{def:outcome} (a) and (c) by $\llbracket 0 \rrbracket^{\mathcal{L}} = \llbracket 0 \rrbracket^{\mathcal{R}} = \emptyset$.
This is consistent with the fact that the next player cannot move and loses in $\llbracket 0 \rrbracket$ due to normal play.

Every game belongs to exactly one outcome class, that is, $\short = \mathscr{L} \sqcup \mathscr{R} \sqcup \mathscr{P} \sqcup \mathscr{N}$.
Table \ref{tab:order-outcome} shows the relations among the outcome classes, where the inequalities such as $G \geq 0$ in the table are used later.
For $G \in \short$, the outcome class to which $G$ belongs is denoted by $o(G)$.

The \emph{disjunctive sum} $G+H$ of games $G$ and $H$ is a game made by combining $G$ and $H$
in which a player makes a move on exactly one of $G$ and $H$.

\begin{definition}
For $G, H \in \short$,
the \emph{disjunctive sum} (or \emph{disjunctive compound}) $G + H \in \short$ of $G$ and $H$ is defined recursively as
\begin{eqnarray*}
(G+H)^{\mathcal{L}} &\defeq{=}& (G^{\mathcal{L}} + H) \cup (G + H^{\mathcal{L}}),\\
(G+H)^{\mathcal{R}} &\defeq{=}& (G^{\mathcal{R}} + H) \cup (G + H^{\mathcal{R}}),
\end{eqnarray*}
where $\mathcal{G} + H \defeq{=}  H + \mathcal{G} \defeq{=}  \{G + H : G \in \mathcal{G}\}$ for $\mathcal{G} \subseteq \short$ and $H \in \short$.
\end{definition}

The disjunctive sum is associative and commutative and has the identity $\llbracket 0 \rrbracket \in \short$ as follows.

\begin{proposition}
The following statements (i)--(iii) hold.
\begin{enumerate}[(i)]
\item For any $G, H, J\in \short$, we have $(G + H) + J \cong G + (H + J)$.
\item For any $G, H \in \short$, we have $G + H \cong H + G$.
\item For any $G \in \short$, we have $G + \llbracket 0 \rrbracket \cong G$.
\end{enumerate}
\end{proposition}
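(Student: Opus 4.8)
The plan is to prove (i)--(iii) simultaneously by well-founded induction, taking as the induction parameter the sum of the formal birthdays of the games involved: $\birth(G)+\birth(H)+\birth(J)$ for (i), $\birth(G)+\birth(H)$ for (ii), and $\birth(G)$ for (iii). This is legitimate because every option of a game has strictly smaller formal birthday than the game itself: if $\birth(G)=n\ge 1$ then $G\in\short_n$, so $G^{\mathcal L},G^{\mathcal R}\subseteq\short_{n-1}$ and every option has birthday at most $n-1<n$. Since all three assertions are about $\cong$, that is, literal equality of ordered pairs, each reduces to showing that the two games concerned have the same set of Left options and the same set of Right options; by the symmetry of the definition of $+$ in $\mathcal L$ and $\mathcal R$ it suffices to check the Left options.

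First I would dispose of (iii) and (ii). For (iii), unfolding the definition gives $(G+\llbracket 0\rrbracket)^{\mathcal L}=(G^{\mathcal L}+\llbracket 0\rrbracket)\cup(G+\llbracket 0\rrbracket^{\mathcal L})$, and the second term is empty by \eqref{eq:ue9mokgt6foc}; the induction hypothesis gives $G^L+\llbracket 0\rrbracket\cong G^L$ for each $G^L\in G^{\mathcal L}$, so $(G+\llbracket 0\rrbracket)^{\mathcal L}=G^{\mathcal L}$, and symmetrically $(G+\llbracket 0\rrbracket)^{\mathcal R}=G^{\mathcal R}$. For (ii), the definition gives $(G+H)^{\mathcal L}=(G^{\mathcal L}+H)\cup(G+H^{\mathcal L})$ and $(H+G)^{\mathcal L}=(H^{\mathcal L}+G)\cup(H+G^{\mathcal L})$; applying the induction hypothesis to the pairs $(G^L,H)$ and $(G,H^L)$, which have strictly smaller birthday sum, shows $G^L+H\cong H+G^L$ and $G+H^L\cong H^L+G$, so the two unions coincide.

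For (i), I would expand the definition of $+$ twice on each side. Adopting the convention that $G^{\mathcal L}+H+J$ denotes $\{(G^L+H)+J:G^L\in G^{\mathcal L}\}$, and similarly for the other expressions, this gives
\begin{align*}
((G+H)+J)^{\mathcal L}&=(G^{\mathcal L}+H+J)\cup(G+H^{\mathcal L}+J)\cup(G+H+J^{\mathcal L}),\\
(G+(H+J))^{\mathcal L}&=(G^{\mathcal L}+(H+J))\cup(G+(H^{\mathcal L}+J))\cup(G+(H+J^{\mathcal L})).
\end{align*}
The three families on the right match term by term once the induction hypothesis for associativity is applied to the triples $(G^L,H,J)$, $(G,H^L,J)$, and $(G,H,J^L)$, each of which has strictly smaller birthday sum than $(G,H,J)$; the Right options are symmetric, and (i) follows.

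I do not expect a genuine obstacle here: the content is pure bookkeeping with the recursive definition, and the only real ingredient is the well-foundedness of the option relation, which is exactly what the strictly increasing chain $\short_0\subsetneq\short_1\subsetneq\cdots$ supplies. The one point to be careful about is (i): one should expand both sides symmetrically down to their first options and then invoke associativity on strictly smaller triples, rather than attempting to rewrite one side into the other step by step, which would drag in commutativity unnecessarily.
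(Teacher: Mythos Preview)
Your proof is correct and is precisely the standard argument found in the textbooks the paper cites; the paper itself does not supply a proof of this proposition but explicitly defers it to \cite{ANW19, BCG18, Con00, HG16, Sie13}. There is nothing to compare against, and your induction on the sum of formal birthdays is exactly the expected route.
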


To compare the advantages of two games for a player, we introduce an order relation $\leq$ between games
in such a way that the more advantageous a game $G$ is for Left, the greater the game $G$ is in the order $\leq$.

We first define a partial order of the set $\{\mathscr{L}, \mathscr{R}, \mathscr{P}, \mathscr{N}\}$ of all outcome classes
in such a way that the more desirable one for Left is greater:
$\mathscr{P} \leq \mathscr{L}$ and $\mathscr{N} \leq \mathscr{L}$ since $\mathscr{L}$ is more desirable than $\mathscr{P}$ and $\mathscr{N}$ for Left;
$\mathscr{R} \leq \mathscr{P}$ and $\mathscr{R} \leq \mathscr{N}$ since $\mathscr{P}$ and $\mathscr{N}$ are more desirable than $\mathscr{R}$ for Left;
$\mathscr{N}$ and $\mathscr{P}$ are incomparable to each other.
Then a game $G$ is less than or equal to a game $H$ if and only if the outcome class of $G + X$ is less than or equal to the outcome class of $H + X$ for any game $X$.

\begin{definition}
We define a binary relation $\leq$ of $\short$ as follows:
for any $G, H \in \short$, it holds that $G \leq H$ if and only if
\begin{equation*}
\forall X \in \short,\,\, o(G+X) \leq o(H+X),
\end{equation*}
where two outcome classes $o_1, o_2 \in  \{\mathscr{L}, \mathscr{R}, \mathscr{P}, \mathscr{N}\}$ are defined to satisfy $o_1 \leq o_2$ if and only if
\begin{equation*}
(o_1, o_2) \in \{(\mathscr{L}, \mathscr{L}), (\mathscr{R}, \mathscr{R}), (\mathscr{P}, \mathscr{P}), (\mathscr{N}, \mathscr{N}), 
(\mathscr{R}, \mathscr{L}), (\mathscr{R}, \mathscr{P}), (\mathscr{R}, \mathscr{N}), (\mathscr{P}, \mathscr{L}), (\mathscr{N}, \mathscr{L})\}.
\end{equation*}
\end{definition}

We write $G = H$ (resp.~$G < H$, $G \parallel H$) if $G \leq H$ and $G \geq H$ (resp.~if $G \leq H$ and $G \not\geq H$, if $G \not\leq H$ and $G \not\geq H$).

The binary relation $\leq$ of $\short$ satisfies the reflexivity and transitivity, that is, the binary relation $\leq$ is a preorder of $\short$.
Also, the order by $\leq$ is preserved by the disjunctive sum.
Namely, the following proposition holds.

\begin{proposition}
\label{prop:order}
The following statements (i)--(iii) hold.
\begin{enumerate}[(i)]
\item For any $G \in \short$, we have $G \leq G$.
\item For any $G, H, J \in \short$, if $G \leq H$ and $H \leq J$, then $G \leq J$.
\item  For any $G, H, J \in \short$, if $G \leq H$, then $G+J \leq H+J$.
In particular, if $G = H$, then $G+J = H+J$.
\end{enumerate}
\end{proposition}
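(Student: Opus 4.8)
The plan is to unwind the definition of $\leq$ in each of the three parts, reducing everything to two elementary facts. The first is that the relation $\leq$ on the four-element set $\{\mathscr{L}, \mathscr{R}, \mathscr{P}, \mathscr{N}\}$ of outcome classes is itself reflexive and transitive (indeed a partial order), which I would establish by a direct inspection of the explicit nine-element list appearing in its definition. The second is that the disjunctive sum is associative and commutative (the proposition immediately preceding this one), so that the outcome class $o(\cdot)$ is unaffected by re-bracketing or reordering of summands; here I would also use that two games related by $\cong$ are literally the same element of $\short$, hence automatically have the same outcome class.

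For (i), I would observe that $G \leq G$ simply unfolds to the assertion ``$o(G+X) \leq o(G+X)$ for every $X \in \short$'', which holds because every pair $(o,o)$ with $o \in \{\mathscr{L}, \mathscr{R}, \mathscr{P}, \mathscr{N}\}$ lies in the list defining $\leq$ on outcome classes. For (ii), assuming $G \leq H$ and $H \leq J$, I would fix an arbitrary $X \in \short$, read off $o(G+X) \leq o(H+X)$ and $o(H+X) \leq o(J+X)$ from the hypotheses, and conclude $o(G+X) \leq o(J+X)$ by transitivity of $\leq$ on outcome classes; since $X$ is arbitrary, $G \leq J$ follows.

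For (iii), assuming $G \leq H$, I would fix $X \in \short$ and use associativity to write $(G+J)+X \cong G+(J+X)$ and $(H+J)+X \cong H+(J+X)$, so that $o((G+J)+X) = o(G+(J+X))$ and likewise for $H$. Applying the hypothesis $G \leq H$ with the test game $J+X$ in the role of $X$ yields $o(G+(J+X)) \leq o(H+(J+X))$, hence $o((G+J)+X) \leq o((H+J)+X)$; as $X$ is arbitrary, $G+J \leq H+J$. The ``in particular'' clause then follows by applying this to $G \leq H$ and to $H \leq G$ separately and combining the two conclusions.

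The main obstacle, such as it is, is the finite bookkeeping check that the displayed relation on $\{\mathscr{L}, \mathscr{R}, \mathscr{P}, \mathscr{N}\}$ is reflexive and transitive; this is routine, guided by the picture that $\mathscr{R}$ is the worst outcome for Left, $\mathscr{L}$ the best, and $\mathscr{N}, \mathscr{P}$ form the intermediate incomparable pair. Beyond that, all three parts are direct substitutions into the definition of $\leq$, with part (iii) being the only one that genuinely invokes the algebra of the disjunctive sum.
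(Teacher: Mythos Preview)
Your proof is correct. The paper itself does not give a proof of this proposition; it is stated as a known result with the proofs deferred to standard textbooks (\cite{ANW19, BCG18, Con00, HG16, Sie13}), so there is no paper proof to compare against. Your argument---unwinding the definition of $\leq$, verifying reflexivity and transitivity of the outcome-class order by direct inspection, and for (iii) using associativity of $+$ to absorb $J$ into the test game---is exactly the standard textbook argument and is complete as written.
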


The order relation and the outcome classes correspond as follows.
\begin{proposition}
\label{prop:order-outcome}
For any $G \in \short$, the following statements (i)--(iv) hold.
\begin{enumerate}[(i)]
\item $G = \llbracket 0 \rrbracket \iff o(G) = \mathscr{P}$.
\item $G > \llbracket 0 \rrbracket \iff o(G) = \mathscr{L}$.
\item $G < \llbracket 0 \rrbracket \iff o(G) = \mathscr{R}$.
\item $G \parallel \llbracket 0 \rrbracket \iff o(G) = \mathscr{N}$.
\end{enumerate}
\end{proposition}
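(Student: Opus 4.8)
The plan is to deduce all four equivalences from the single statement that $G \geq \llbracket 0 \rrbracket$ if and only if $G \in \mathscr{P}^L$, together with its Left--Right mirror, that $G \leq \llbracket 0 \rrbracket$ if and only if $G \in \mathscr{P}^R$. Granting these, parts (i)--(iv) are purely formal: by the definitions of $=$, $>$, $<$ and $\parallel$, each of the relations $G = \llbracket 0 \rrbracket$, $G > \llbracket 0 \rrbracket$, $G < \llbracket 0 \rrbracket$, $G \parallel \llbracket 0 \rrbracket$ unfolds into a Boolean combination of $G \geq \llbracket 0 \rrbracket$ and $G \leq \llbracket 0 \rrbracket$, hence into the corresponding combination of the conditions $G \in \mathscr{P}^L$ and $G \in \mathscr{P}^R$. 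Using $\mathscr{P} = \mathscr{P}^L \cap \mathscr{P}^R$ together with the complementarities $\short \setminus \mathscr{P}^R = \mathscr{N}^L$ and $\short \setminus \mathscr{P}^L = \mathscr{N}^R$ (which come from $\short = \mathscr{N}^L \sqcup \mathscr{P}^R = \mathscr{P}^L \sqcup \mathscr{N}^R$), one reads off: $G = \llbracket 0 \rrbracket$ iff $G \in \mathscr{P}^L \cap \mathscr{P}^R = \mathscr{P}$; $G > \llbracket 0 \rrbracket$ iff $G \in \mathscr{P}^L$ and $G \notin \mathscr{P}^R$, i.e. iff $G \in \mathscr{P}^L \cap \mathscr{N}^L = \mathscr{L}$; symmetrically $G < \llbracket 0 \rrbracket$ iff $o(G) = \mathscr{R}$; and $G \parallel \llbracket 0 \rrbracket$ iff $G \in \mathscr{N}^L \cap \mathscr{N}^R = \mathscr{N}$.

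The technical core, which I would establish first, is the following lemma: if $G, H \in \mathscr{P}^L$, then $G + H \in \mathscr{P}^L$ (and, by Left--Right symmetry, the same with $\mathscr{P}^R$ in place of $\mathscr{P}^L$). I would prove it by induction on $\birth(G) + \birth(H)$ using Definition~\ref{def:outcome}. Every Right option of $G + H$ has the form $G^R + H$ or $G + H^R$. Consider $G^R + H$: since $G \in \mathscr{P}^L$, part~(a) of Definition~\ref{def:outcome} gives $G^R \in \mathscr{N}^L$, so by part~(b) there is a Left option $G^{RL}$ of $G^R$ with $G^{RL} \in \mathscr{P}^L$; then $G^{RL} + H$ is a Left option of $G^R + H$, and the pair $G^{RL}, H$ lies in $\mathscr{P}^L$ with strictly smaller total formal birthday (an option strictly decreases the formal birthday), so the induction hypothesis gives $G^{RL} + H \in \mathscr{P}^L$, whence $G^R + H \in \mathscr{N}^L$ by part~(b). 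The option $G + H^R$ is symmetric, using $H \in \mathscr{P}^L$. Thus every Right option of $G + H$ lies in $\mathscr{N}^L$, and so $G + H \in \mathscr{P}^L$ by part~(a); the case where $G + H$ has no Right option (in particular the base case $G = H = \llbracket 0 \rrbracket$) is the vacuous instance of this argument.

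Next I would prove $G \geq \llbracket 0 \rrbracket \iff G \in \mathscr{P}^L$. For ``$\Rightarrow$'', instantiating $X$ with $\llbracket 0 \rrbracket$ in the definition of $\llbracket 0 \rrbracket \leq G$, and using $G + \llbracket 0 \rrbracket \cong G$, $\llbracket 0 \rrbracket + \llbracket 0 \rrbracket \cong \llbracket 0 \rrbracket$ and $o(\llbracket 0 \rrbracket) = \mathscr{P}$, gives $o(G) \geq \mathscr{P}$; the explicit list defining $\leq$ on outcome classes then forces $o(G) \in \{\mathscr{P}, \mathscr{L}\}$, hence $G \in \mathscr{P}^L$. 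For ``$\Leftarrow$'', assume $G \in \mathscr{P}^L$; since $\llbracket 0 \rrbracket + X \cong X$, proving $G \geq \llbracket 0 \rrbracket$ amounts to showing $o(X) \leq o(G + X)$ for every $X$, which I would do by cases on $o(X)$. If $o(X) = \mathscr{R}$ there is nothing to prove, as $\mathscr{R}$ is below every outcome class. If $o(X) = \mathscr{P}$ then $X \in \mathscr{P}^L$, so $G + X \in \mathscr{P}^L$ by the lemma, so $o(G+X) \in \{\mathscr{L}, \mathscr{P}\}$ and hence $\mathscr{P} = o(X) \leq o(G+X)$. If $o(X) = \mathscr{N}$ then $X \in \mathscr{N}^L$, so by Definition~\ref{def:outcome}(b) there is a Left option $X^L \in \mathscr{P}^L$ of $X$; then $G + X^L \in \mathscr{P}^L$ by the lemma, so $G + X \in \mathscr{N}^L$ by Definition~\ref{def:outcome}(b), so $o(G+X) \in \{\mathscr{L}, \mathscr{N}\}$ and hence $\mathscr{N} = o(X) \leq o(G+X)$. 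If $o(X) = \mathscr{L}$, the two preceding arguments together give $G + X \in \mathscr{P}^L \cap \mathscr{N}^L = \mathscr{L}$, so $o(G+X) = \mathscr{L} = o(X)$. The mirror statement $G \leq \llbracket 0 \rrbracket \iff G \in \mathscr{P}^R$ follows by interchanging Left and Right throughout (using Definition~\ref{def:outcome}(c),(d) and the $\mathscr{P}^R$-version of the lemma).

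The only step carrying genuine content is the lemma of the second paragraph --- a ``respond in the same component'' strategy argument made rigorous by the stated induction --- which I expect to be the main obstacle; the rest is formal manipulation of the preorder $\leq$, the outcome classes, and the complementarities. The one easy-to-miss point there is that $\mathscr{P}^L$ and $\mathscr{N}^L$ are \emph{not} complementary (they overlap in $\mathscr{L}$), so that the complementary pairs to use throughout are $\{\mathscr{N}^L, \mathscr{P}^R\}$ and $\{\mathscr{P}^L, \mathscr{N}^R\}$.
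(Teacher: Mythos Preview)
Your proof is correct. Note, however, that the paper does not actually prove this proposition: it is stated in the preliminaries section as a known result, with the proofs explicitly deferred to standard textbooks (\cite{ANW19, BCG18, Con00, HG16, Sie13}). Your argument --- reducing everything to the equivalence $G \geq \llbracket 0 \rrbracket \iff G \in \mathscr{P}^L$, proving the closure lemma $\mathscr{P}^L + \mathscr{P}^L \subseteq \mathscr{P}^L$ by birthday induction, and then handling the $\Leftarrow$ direction by a case split on $o(X)$ --- is precisely the standard textbook proof one finds in those references (see, e.g., Siegel, \emph{Combinatorial Game Theory}, Theorem~II.1.12 and its surrounding lemmas). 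So there is nothing to compare against in the paper itself, but your write-up is a faithful and complete reconstruction of the classical argument.
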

Table \ref{tab:order-outcome} shows the correspondence between the outcome classes and the order relation.

By Proposition \ref{prop:order-outcome}, we can replace the discussion on the outcome classes with that on the order relations, and vice versa.
For example, to prove that Left can win a game $G$ playing first (i.e., $G \in \mathscr{L} \cup \mathscr{N} = \mathscr{N}^L)$, it suffices to prove $G \not\leq \llbracket 0 \rrbracket$.

\begin{table}
\label{tab:order-outcome}
    \centering
    \caption{The correspondence between the outcome classes and the order relation}
    \label{tab:hogehoge}
    \begin{tabular}{cccc}
        \multicolumn{2}{c}{\multirow{4}{*}{}} & \multicolumn{2}{|c}{If Right plays first} \\ \cline{3-4}
        & &\multicolumn{1}{|c|}{Left wins} & Right wins\\
        & &\multicolumn{1}{|c|}{$\mathscr{P}^L$} & $\mathscr{N}^R$\\
        & &\multicolumn{1}{|c|}{$G \geq \llbracket 0 \rrbracket$} & $G \not\geq \llbracket 0 \rrbracket$\\
        \hline
        \multirow{6}{*}{\begin{tabular}{p{10pt}}If Left plays first\end{tabular}}
        & \multicolumn{1}{|c|}{Left wins} & \multicolumn{1}{c|}{Left wins} & First player wins\\
        & \multicolumn{1}{|c|}{$\mathscr{N}^L$} & \multicolumn{1}{c|}{$\mathscr{L}$} & $\mathscr{N}$ \\
        & \multicolumn{1}{|c|}{$G \not\leq \llbracket 0 \rrbracket$} & \multicolumn{1}{c|}{$G > \llbracket 0 \rrbracket$} & $G \parallel \llbracket 0 \rrbracket$ \\ \cline{2-4}
        & \multicolumn{1}{|c|}{Right wins} & \multicolumn{1}{c|}{Second player wins} & Right wins\\
        & \multicolumn{1}{|c|}{$\mathscr{P}^R$} & \multicolumn{1}{c|}{$\mathscr{P}$} & $\mathscr{R}$ \\
        & \multicolumn{1}{|c|}{$G \leq \llbracket 0 \rrbracket$} & \multicolumn{1}{c|}{$G = \llbracket 0 \rrbracket$} & $G < \llbracket 0 \rrbracket$ \\
    \end{tabular}
\end{table}

The binary relation $=$ is an equivalence relation of $\short$ by Proposition \ref{prop:order} (i) and (ii).
For $G \in \short$, the \emph{game value} of $G$, denoted by $\bar{G}$, is defined as the equivalence class modulo $=$ to which $G$ belongs.
Note that $G = H$ if and only if
\begin{equation*}
\forall X \in \short,\,\, o(G+X) = o(H+X).
\end{equation*}
Namely, if $G$ and $H$ have the same game value, then $G$ in a disjunctive sum can be interchanged with $H$ without changing the outcome class of the whole disjunctive sum.

Let $\Short$ denote the quotient set by the equivalence relation $=$, that is, $\Short \defeq= \short / {=}$.
The order relation and disjunctive sum of $\short$ can be naturally extended to $\Short$ as
\begin{eqnarray}
\bar{G} \leq \bar{H} &\defeq{\iff}& G \leq H, \label{eq:xqzkskc9vool}\\
\bar{G} + \bar{H} &\defeq{=}& \overline{G+H} \label{eq:5bkgaabx6q3e}
 \end{eqnarray}
 for $G, H \in \short$.
By Proposition \ref{prop:order} (iii), the above definitions (\ref{eq:xqzkskc9vool}) and (\ref{eq:5bkgaabx6q3e}) are well-defined.
The order relation $\leq$ of $\Short$ is a partial order since it also satisfies the anti-symmetry: $\bar{G} \leq \bar{H}$ and $\bar{H} \leq \bar{G}$ imply $\bar{G} = \bar{H}$.

This yields the following corollary of Proposition \ref{prop:order-outcome}.
\begin{corollary}
\label{cor:order-outcome}
For any $G \in \short$, the following statements (i)--(iv) hold.
\begin{enumerate}[(i)]
\item $\bar{G} = \overline{\llbracket 0 \rrbracket} \iff o(G) = \mathscr{P}$.
\item $\bar{G} > \overline{\llbracket 0 \rrbracket} \iff o(G) = \mathscr{L}$.
\item $\bar{G} < \overline{\llbracket 0 \rrbracket} \iff o(G) = \mathscr{R}$.
\item $\bar{G} \parallel \overline{\llbracket 0 \rrbracket} \iff o(G) = \mathscr{N}$.
\end{enumerate}
\end{corollary}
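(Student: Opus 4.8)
The plan is to reduce each of the four biconditionals directly to the corresponding clause of Proposition~\ref{prop:order-outcome} by unwinding the definition of the order relation on $\Short$. The key observation is that, by the defining equivalence~\eqref{eq:xqzkskc9vool}, we have $\bar{G} \leq \bar{H} \iff G \leq H$ for all $G, H \in \short$; in particular $\bar{G} \leq \overline{\llbracket 0 \rrbracket} \iff G \leq \llbracket 0 \rrbracket$ and $\bar{G} \geq \overline{\llbracket 0 \rrbracket} \iff G \geq \llbracket 0 \rrbracket$. Thus the quotient map $G \mapsto \bar{G}$ transports the relations $\leq$, $\geq$, and hence also $=$, $<$, $\parallel$ (with $\llbracket 0 \rrbracket$) between $\short$ and $\Short$.

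For (i), $\bar{G} = \overline{\llbracket 0 \rrbracket}$ is by definition of the game value equivalent to $G = \llbracket 0 \rrbracket$, so Proposition~\ref{prop:order-outcome}(i) gives $o(G) = \mathscr{P}$ at once. For (ii), $\bar{G} > \overline{\llbracket 0 \rrbracket}$ means $\bar{G} \geq \overline{\llbracket 0 \rrbracket}$ and $\bar{G} \neq \overline{\llbracket 0 \rrbracket}$; since $\leq$ on $\Short$ is a partial order (anti-symmetry, noted just before the corollary) this is the same as $\bar{G} \geq \overline{\llbracket 0 \rrbracket}$ and $\bar{G} \not\leq \overline{\llbracket 0 \rrbracket}$, hence by the observation $G \geq \llbracket 0 \rrbracket$ and $G \not\leq \llbracket 0 \rrbracket$, i.e.\ $G > \llbracket 0 \rrbracket$ in $\short$, and Proposition~\ref{prop:order-outcome}(ii) finishes. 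Cases (iii) and (iv) are entirely analogous: $\bar{G} < \overline{\llbracket 0 \rrbracket}$ unwinds to $G < \llbracket 0 \rrbracket$ and $\bar{G} \parallel \overline{\llbracket 0 \rrbracket}$ to $G \parallel \llbracket 0 \rrbracket$, after which the matching clause of Proposition~\ref{prop:order-outcome} applies.

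I do not expect a genuine obstacle: the corollary is purely a matter of carrying the already-established link between the outcome class of $G$ and the position of $G$ in the order on $\short$ across the quotient, using that $G \mapsto \bar{G}$ is by~\eqref{eq:xqzkskc9vool} an order embedding. The only point requiring minor care is to check that the derived symbols $<$ and $\parallel$ on $\Short$ translate correctly, which is exactly where the anti-symmetry of $\leq$ on $\Short$ is used; once that is in place, each of (i)--(iv) is a one-line deduction.
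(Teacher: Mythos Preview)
Your proposal is correct and matches the paper's (implicit) approach: the paper states the corollary as an immediate consequence of Proposition~\ref{prop:order-outcome} together with the definition~\eqref{eq:xqzkskc9vool} and gives no further proof. Your small detour through anti-symmetry for case~(ii) is unnecessary, since if one carries over the convention that $\bar{G} > \overline{\llbracket 0 \rrbracket}$ means $\bar{G} \geq \overline{\llbracket 0 \rrbracket}$ and $\bar{G} \not\leq \overline{\llbracket 0 \rrbracket}$ (analogously to the definition on~$\short$), the translation via~\eqref{eq:xqzkskc9vool} is direct; but your argument is valid as written.
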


Therefore, to find the outcome class of a given disjunctive sum $G \defeq{\cong} G_1 + G_2 + \cdots + G_n$,
it suffices to find the game value $\bar{G_i}$ of each component $G_i$ and
evaluate the order relationship between $\overline{\llbracket 0 \rrbracket}$ and the sum
\begin{equation}
\bar{G} = \overline{G_1 + G_2 + \cdots + G_n} = \bar{G_1} + \bar{G_2} + \cdots + \bar{G_n}
\end{equation}
by using Corollary \ref{cor:order-outcome}.
Further, we may choose a game that is simple and easy to handle as the representative $G_i$ of each term.
In many cases, ``finding the game value of a game $G$'' means finding a sufficiently simple representative of $\bar{G}$
\footnote{It is known that every $\bar{G} \in \Short$ has the ``simplest'' representative called the \emph{canonical form} of $\bar{G}$.
More precisely, there exists a unique game $G^{\st} \in \bar{G}$ with the minimum formal birthday in $\bar{G}$.}.

Note that $\Short$ forms a commutative monoid with the disjunctive sum defined in (\ref{eq:5bkgaabx6q3e}).
Further, the set $\Short$ forms an abelian group by the following Definition \ref{def:negation} of the negative $-G \in \short$ of a game $G \in \short$.
More precisely, $\overline{-G} \in \Short$ serves as the inverse of $\bar{G} \in \Short$ as shown in Proposition \ref{prop:negation}.
The negative $-G$ is defined by exchanging the roles of Left and Right in $G$ as follows.

\begin{definition}
\label{def:negation}
For $G \in \short$, the \emph{negative} $-G$ of $G$ is defined recursively as
\begin{equation}
\label{eq:sppm6ebdbfgf}
-G \defeq{\cong} \left\{ -(G^{\mathcal{R}}) \relmiddle| -(G^{\mathcal{L}}) \right\},
\end{equation}
where $-\mathcal{G} \defeq{=} \{-G : G \in \mathcal{G}\}$ for $\mathcal{G} \subseteq \short$.
\end{definition}

\begin{proposition}
\label{prop:negation}
The following statements (i)--(iii) hold.
\begin{enumerate}[(i)]
\item For any $G \in \short$, we have $-(-G) \cong G$.
\item For any $G \in \short$, we have $G + (-G) = \llbracket 0 \rrbracket$.
\item For any $G, H \in \short$, we have $-(G+H) = (-G) + (-H)$.
\end{enumerate}
\end{proposition}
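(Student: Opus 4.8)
The plan is to handle the three statements in the order (i), (iii), (ii); the first two are routine structural inductions, while (ii) carries the real content. For (i) I would induct on $\birth(G)$: writing $G \cong \left\{ G^{\mathcal{L}} \relmiddle| G^{\mathcal{R}} \right\}$, Definition \ref{def:negation} applied twice gives $-(-G) \cong \left\{ -(-(G^{\mathcal{L}})) \relmiddle| -(-(G^{\mathcal{R}})) \right\}$, and since every option has strictly smaller formal birthday the induction hypothesis restores each option to itself, collapsing the two option sets back to $G^{\mathcal{L}}$ and $G^{\mathcal{R}}$; the base case $G \cong \llbracket 0 \rrbracket$ follows from $-\llbracket 0 \rrbracket \cong \llbracket 0 \rrbracket$.

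For (iii) I would instead prove the stronger identity $-(G+H) \cong (-G)+(-H)$, which implies the stated equality since $\cong$ implies $=$, by induction on $\birth(G)+\birth(H)$. Unwinding the recursive definitions of the disjunctive sum and of negation, the Right options of $-(G+H)$ form the set $-\bigl((G^{\mathcal{L}}+H)\cup(G+H^{\mathcal{L}})\bigr)$, whereas those of $(-G)+(-H)$ form $\bigl((-G)^{\mathcal{R}}+(-H)\bigr)\cup\bigl((-G)+(-H)^{\mathcal{R}}\bigr) = \bigl(-(G^{\mathcal{L}})+(-H)\bigr)\cup\bigl((-G)+(-(H^{\mathcal{L}}))\bigr)$, using $(-G)^{\mathcal{R}}=-(G^{\mathcal{L}})$ and $(-H)^{\mathcal{R}}=-(H^{\mathcal{L}})$; applying the induction hypothesis termwise, for instance $-(G^L+H)\cong(-G^L)+(-H)$ for each $G^L \in G^{\mathcal{L}}$, identifies these two sets, and the Left options match by the symmetric computation.

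Statement (ii) is the main step. By Proposition \ref{prop:order-outcome} (i) it is equivalent to show $o\bigl(G+(-G)\bigr)=\mathscr{P}$, i.e.\ $G+(-G)\in\mathscr{P}^L\cap\mathscr{P}^R$, and I would prove this by induction on $\birth(G)$ using the mirroring (Tweedledum--Tweedledee) strategy. To get $G+(-G)\in\mathscr{P}^L$ I verify Definition \ref{def:outcome} (a): a Right option of $G+(-G)$ is either $G^R+(-G)$ with $G^R\in G^{\mathcal{R}}$, or $G+(-(G^L))$ with $G^L\in G^{\mathcal{L}}$ (using $(-G)^{\mathcal{R}}=-(G^{\mathcal{L}})$). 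In the first case Left responds in the second component, moving $-G$ to $-(G^R)\in(-G)^{\mathcal{L}}$ and reaching $G^R+(-(G^R))$; in the second case Left responds in the first component, reaching $G^L+(-(G^L))$. Each resulting position has the form $J+(-J)$ with $\birth(J)<\birth(G)$, so by the induction hypothesis it lies in $\mathscr{P}\subseteq\mathscr{P}^L$, and Definition \ref{def:outcome} (b) then certifies the Right option as a member of $\mathscr{N}^L$. Hence $G+(-G)\in\mathscr{P}^L$; the containment $G+(-G)\in\mathscr{P}^R$ follows by the mirror-image argument (swapping the roles of Left and Right, and using (i)), and the base case $G\cong\llbracket 0 \rrbracket$ is trivial since $\llbracket 0 \rrbracket+(-\llbracket 0 \rrbracket)\cong\llbracket 0 \rrbracket\in\mathscr{P}$. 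Finally Proposition \ref{prop:order-outcome} (i) converts $o\bigl(G+(-G)\bigr)=\mathscr{P}$ into $G+(-G)=\llbracket 0 \rrbracket$.

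The only real obstacle is bookkeeping: at each step one must confirm that the intended mirror reply is genuinely an available option, which is precisely what the option-swapping shape of $-G$ in Definition \ref{def:negation} guarantees, and that the induction parameter strictly decreases so that every branch terminates in the base case. With the invariant ``$G+(-G)\in\mathscr{P}$'' chosen as the inductive claim, each case reduces directly to a strictly smaller instance of the same form, so no computation on game values is needed.
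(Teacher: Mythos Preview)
Your proof is correct and follows the standard textbook argument (structural induction for (i) and (iii), the Tweedledum--Tweedledee mirroring strategy combined with Proposition~\ref{prop:order-outcome}~(i) for (ii)). The paper does not give its own proof of this proposition---it explicitly defers all proofs of the preliminary propositions to the cited textbooks \cite{ANW19, BCG18, Con00, HG16, Sie13}---so your write-up is exactly the kind of argument those references contain.
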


We use $G-H$ as a shorthand for $G + (-H)$.

Since the disjunctive sum is associative and commutative, we may use the following notation without ambiguity.
For an integer $n$ and $G \in \short$, let $n \cdot G$ denote the disjunctive sum of $n$ copies of $G$.
More precisely,
\begin{equation*}
n \cdot G \defeq{\cong}
\begin{cases}
\llbracket 0 \rrbracket &\,\,\text{if}\,\, n = 0, \\
\underbrace{G + G + \cdots + G}_n &\,\,\text{if}\,\, n > 0,\\
\underbrace{(-G) + (-G) + \cdots + (-G)}_{-n} &\,\,\text{if}\,\, n < 0.\\
\end{cases}
\end{equation*}
Also, for $G_1, G_2, \ldots, G_n \in \short$, let
\begin{equation*}
\sum_{i = 1}^n G_i \defeq{\cong} G_1 + G_2 + \cdots + G_n.
\end{equation*}

Replacing an option of $G$ with a greater one makes $G$ greater as follows.

\begin{proposition}[Replacement Lemma]
For any $G \cong \{G^L_1, \ldots, G^L_l \mid G^R_1, \ldots, G^R_r \} \in \short$, the following statements (i) and (ii) hold.
\begin{enumerate}[(i)]
\item Let $G' \cong \{G'^L_1, \ldots, G^L_l \mid G^R_1, \ldots, G^R_r \}$ be the game obtained by replacing the Left option $G^L_1$ of $G$ with a game $G'^L_1$ such that $G'^L_1 \geq G^L_1$. Then we have $G' \geq G$.
\item Let $G' \cong \{G^L_1, \ldots, G^L_l \mid G'^R_1, \ldots, G^R_r \}$ be the game obtained by replacing the Right option $G^R_1$ of $G$ with a game $G'^R_1$ such that $G'^R_1 \geq G^R_1$. Then we have $G' \geq G$.
\end{enumerate}
\end{proposition}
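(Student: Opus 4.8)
The plan is to reduce both inequalities $G' \geq G$ to a single statement about the game $G' - G$, and then to exhibit an explicit winning strategy for Left in it. First I would note, using Proposition~\ref{prop:order}(iii) (monotonicity of the disjunctive sum) together with Proposition~\ref{prop:negation}(ii), that adding $-G$ to both sides of $G' \geq G$ yields $G' - G \geq \llbracket 0 \rrbracket$, and conversely adding $G$ to both sides of $G' - G \geq \llbracket 0 \rrbracket$ recovers $G' \geq G$. By Corollary~\ref{cor:order-outcome}(i)--(ii), the condition $G' - G \geq \llbracket 0 \rrbracket$ is in turn equivalent to $o(G' - G) \in \{\mathscr{L}, \mathscr{P}\}$, i.e.\ to $G' - G \in \mathscr{P}^L$: Left wins $G' - G$ whenever Right moves first. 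So in both cases it suffices to produce such a strategy for Left. (This same reasoning, run on any pair $Y \geq Z$, also records the implication $Y \geq Z \implies Y - Z \in \mathscr{P}^L$, which I will reuse below.)

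To treat (i) and (ii) uniformly, write the Left options of $G'$ as $G'^L_1, \ldots, G'^L_l$ and its Right options as $G'^R_1, \ldots, G'^R_r$, so that $G'^L_j \geq G^L_j$ and $G'^R_j \geq G^R_j$ for all $j$ (in case~(i) the only proper inequality is $G'^L_1 \geq G^L_1$, all other options literally coinciding with those of $G$; in case~(ii) it is $G'^R_1 \geq G^R_1$). Unwinding Definition~\ref{def:negation} and the definition of $+$, the Right options of $G' - G$ are exactly the games $G'^R_k - G$ (Right moving in the $G'$ component) and $G' - G^L_k$ (Right moving in the $-G$ component, recalling $(-G)^{\mathcal{R}} = -(G^{\mathcal{L}})$). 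I would have Left respond to $G'^R_k - G$ by moving inside $-G$ to reach $G'^R_k - G^R_k$, and respond to $G' - G^L_k$ by moving inside $G'$ to reach $G'^L_k - G^L_k$. In each case the position reached is a difference $Y - Z$ with $Y \geq Z$ (using reflexivity of $\leq$ where the relevant options of $G$ and $G'$ agree), so by the implication recorded above $Y - Z \in \mathscr{P}^L$; since Right is now to move, Left wins. As every Right opening of $G' - G$ admits such a reply, Definition~\ref{def:outcome}(a)--(b) gives $G' - G \in \mathscr{P}^L$, hence $G' \geq G$, proving both (i) and (ii).

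I do not anticipate a real obstacle: the heart of this is the standard Tweedledee--Tweedledum copying strategy, and the single replaced option is absorbed by the hypothesis $G'^L_1 \geq G^L_1$ (resp.\ $G'^R_1 \geq G^R_1$). The only point calling for a little care is to present the argument as a genuine strategy rather than a statement about Left's first reply alone; this is clean here precisely because each prescribed reply leaves Left in a position of the form $\llbracket 0 \rrbracket$ or $Y - Z$ with $Y \geq Z$, for which membership in $\mathscr{P}^L$ is already available and does not depend on the present lemma. One can package this either as an induction on formal birthdays appealing directly to Definition~\ref{def:outcome}, or --- the shorter route I would take --- simply by invoking the equivalence ``$G' \geq G \iff G' - G \in \mathscr{P}^L$'' one level down.
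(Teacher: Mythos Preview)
Your argument is correct. The paper does not actually supply a proof of this proposition; it is listed among the preliminaries with the blanket remark that proofs are left to the standard textbooks. Your write-up is essentially the textbook proof: reduce $G' \geq G$ to $G' - G \in \mathscr{P}^L$ via Propositions~\ref{prop:order}(iii), \ref{prop:negation}(ii), and \ref{prop:order-outcome}, then give Left the mirroring reply in $G' - G$ so that after two moves one reaches $G'^L_k - G^L_k$ or $G'^R_k - G^R_k$, each of which is $\geq 0$ by hypothesis (or by reflexivity when the options coincide). The key point you flag --- that the implication $Y \geq Z \Rightarrow Y - Z \in \mathscr{P}^L$ used at the leaves rests only on the earlier propositions and not on the lemma being proved --- is exactly what makes the argument non-circular, and your handling of it is clean.
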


Combining with the symmetric argument, we obtain the following corollary: we can replace an option of $G$ with another with an equal game value without changing the game value.

\begin{corollary}
\label{cor:replace}
For any $G \cong \{G^L_1, \ldots, G^L_l \mid G^R_1, \ldots, G^R_r \} \in \short$, the following statements (i) and (ii) hold.
\begin{enumerate}[(i)]
\item Let $G' \cong \{G'^L_1, \ldots, G^L_l \mid G^R_1, \ldots, G^R_r \}$ be the game obtained by replacing the Left option $G^L_1$ of $G$ with a game $G'^L_1$ such that $G'^L_1 = G^L_1$. Then we have $G' = G$.
\item Let $G' \cong \{G^L_1, \ldots, G^L_l \mid G'^R_1, \ldots, G^R_r \}$ be the game obtained by replacing the Right option $G^R_1$ of $G$ with a game $G'^R_1$ such that $G'^R_1 = G^R_1$. Then we have $G' = G$.
\end{enumerate}
\end{corollary}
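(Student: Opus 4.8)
The plan is to read off Corollary \ref{cor:replace} directly from the Replacement Lemma, using the fact that the hypothesis $G'^L_1 = G^L_1$ unpacks, by definition of $=$, into the two inequalities $G'^L_1 \geq G^L_1$ and $G^L_1 \geq G'^L_1$.

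For part (i): applying the Replacement Lemma (i) to $G$ with the Left option $G^L_1$ replaced by $G'^L_1$, the hypothesis $G'^L_1 \geq G^L_1$ yields $G' \geq G$. For the reverse inequality, I would observe that $G$ is itself obtained from $G'$ by replacing the Left option $G'^L_1$ with $G^L_1$, and that $G^L_1 \geq G'^L_1$; applying the Replacement Lemma (i) once more — now with the roles of $G$ and $G'$ interchanged — gives $G \geq G'$. Combining the two inequalities with the definition of $=$, we conclude $G' = G$. (Here one uses implicitly that $G^{\mathcal{L}}$ is a \emph{set}, so the choice of which Left option is written ``first'' in the statement of the lemma is purely notational, and the lemma may be invoked for any single Left option.) Part (ii) is proved by the symmetric argument: from $G'^R_1 \geq G^R_1$ the Replacement Lemma (ii) gives $G' \geq G$, and from $G^R_1 \geq G'^R_1$, viewing $G$ as obtained from $G'$ by replacing the Right option $G'^R_1$ with $G^R_1$, the Replacement Lemma (ii) gives $G \geq G'$; hence $G' = G$. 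Alternatively, part (ii) reduces to part (i) by passing to negatives via Definition \ref{def:negation} (replacing a Right option of $G$ corresponds to replacing a Left option of $-G$), using that $=$ is preserved under negation, which itself follows from Proposition \ref{prop:order} (iii) and Proposition \ref{prop:negation}.

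There is essentially no hard step; the only point requiring a moment's care is the reverse application of the Replacement Lemma with the roles of $G$ and $G'$ swapped, together with the observation that the ``first option'' labelling in the lemma's statement carries no content because the Left (resp.\ Right) options of a game form an unordered set.
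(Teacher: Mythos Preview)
Your proof is correct and matches the paper's approach exactly: the paper simply states that the corollary follows from the Replacement Lemma ``combining with the symmetric argument,'' which is precisely what you have spelled out---applying the Replacement Lemma once in each direction using the two inequalities contained in $G'^L_1 = G^L_1$ (resp.\ $G'^R_1 = G^R_1$).
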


Let $\mathbb{Z}$ denote the set of all integers,
and define the set $\dyadic$ of all \emph{dyadic rationals} as
\begin{equation*}
\dyadic \defeq{=} \left\{\frac{m}{2^n} : m, n \in \mathbb{Z}, n \geq 0\right\}.
\end{equation*}
For $x \in \dyadic$, the corresponding game $\llbracket x \rrbracket$ is defined recursively as follows.

\begin{definition}
\label{def:number}
For $x \in \dyadic \setminus \{0\}$, the game $\llbracket x \rrbracket$ is defined recursively as the following statements (i)--(iii).
Note that $\llbracket 0 \rrbracket$ is already defined by (\ref{eq:ue9mokgt6foc}).
\begin{enumerate}[(i)]
\item For an integer $n \geq 1$, the game $\llbracket n \rrbracket$ is defined recursively as
$\llbracket n \rrbracket \defeq{\cong}  \{ \llbracket n-1 \rrbracket \mid \}$.

\item For an odd integer $m \geq 1$ and an integer $n \geq 1$, the game $\left\llbracket \frac{m}{2^n}\right\rrbracket$ is defined recursively as
\begin{equation}
\label{eq:g0h3zqunv24n}
\left\llbracket \frac{m}{2^n}\right\rrbracket \defeq{\cong}  \left\{ \left\llbracket \frac{m-1}{2^n}\right\rrbracket \relmiddle| \left\llbracket\frac{m+1}{2^n}\right\rrbracket \right\}.
\end{equation}

\item For $x \in \dyadic$ such that $x > 0$, the game $\llbracket -x \rrbracket$ is defined as
$\llbracket -x \rrbracket \defeq{\cong} - \llbracket x \rrbracket$.
\end{enumerate}
\end{definition}

Intuitively, for an integer $n \geq 0$, the game $\llbracket n \rrbracket$ (resp.~$\llbracket -n \rrbracket$) is the position in which only Left (resp.~Right) can make exactly $n$ moves.

In disjunctive sums and the order relation of games, the game $\llbracket x \rrbracket$ for $x \in \dyadic$ behaves in the same way as the ordinary addition and total order of real numbers as follows.

\begin{proposition}
\label{prop:number}
The following statements (i)--(iii) hold.
\begin{enumerate}[(i)]
\item For any $x, y \in \dyadic$, we have $\llbracket x+y \rrbracket = \llbracket x \rrbracket + \llbracket y \rrbracket$.
\item For any $x, y \in \dyadic$, we have $x \leq y \iff \llbracket x \rrbracket \leq \llbracket y \rrbracket$.
\item For any $x \in \dyadic$, we have $\llbracket -x \rrbracket \cong - \llbracket x \rrbracket$.
\end{enumerate}
\end{proposition}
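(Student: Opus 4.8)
The plan is to settle part (iii) at once and then to derive (i) and (ii) along the classical lines of the theory of numbers in combinatorial game theory, by way of two lemmas. For (iii): when $x>0$ the statement is Definition \ref{def:number}(iii) itself; when $x=0$ both $\llbracket 0\rrbracket$ and $-\llbracket 0\rrbracket$ are $(\emptyset,\emptyset)$; and when $x<0$, writing $x=-y$ with $y>0$, Definition \ref{def:number}(iii) gives $\llbracket -x\rrbracket\cong\llbracket y\rrbracket$ and $-\llbracket x\rrbracket\cong-(-\llbracket y\rrbracket)\cong\llbracket y\rrbracket$ by Proposition \ref{prop:negation}(i). Throughout the rest I would use the following uniform description of options: for every $x\in\dyadic$, $\llbracket x\rrbracket\cong\{\llbracket x^-\rrbracket\mid\llbracket x^+\rrbracket\}$ where the Left option $\llbracket x^-\rrbracket$ is present unless $x$ is a nonpositive integer, the Right option $\llbracket x^+\rrbracket$ is present unless $x$ is a nonnegative integer, one always has $x^-<x<x^+$ whenever the relevant option exists, and $\birth(\llbracket x^\pm\rrbracket)<\birth(\llbracket x\rrbracket)$. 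Checking this is a short unwinding of Definition \ref{def:number} together with (iii). In particular every $\llbracket x\rrbracket$ has at most one Left and one Right option, and induction on formal birthday is available.

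\emph{Lemma 1 (signs).} For $z\in\dyadic$ one has $o(\llbracket z\rrbracket)=\mathscr L$ if $z>0$, $o(\llbracket z\rrbracket)=\mathscr P$ if $z=0$, and $o(\llbracket z\rrbracket)=\mathscr R$ if $z<0$; equivalently, $\llbracket z\rrbracket\geq\llbracket 0\rrbracket\iff z\geq 0$. I would prove this by induction on $\birth(\llbracket z\rrbracket)$. The case $z=0$ is the remark after Definition \ref{def:outcome}; the case $z<0$ follows from the case $z>0$ by (iii), since negation reverses the order and hence interchanges $\mathscr L$ and $\mathscr R$ (Propositions \ref{prop:order}(iii), \ref{prop:negation}, \ref{prop:order-outcome}). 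For $z>0$: the Left option $\llbracket z^-\rrbracket$ is present with $z^-\geq 0$, so by induction $o(\llbracket z^-\rrbracket)\in\{\mathscr L,\mathscr P\}$, hence $\llbracket z^-\rrbracket\in\mathscr P^L$, whence $\llbracket z\rrbracket\in\mathscr N^L$ by Definition \ref{def:outcome}(b); and either there is no Right option, so $\llbracket z\rrbracket\in\mathscr P^L$ vacuously, or the Right option $\llbracket z^+\rrbracket$ has $z^+>0$, so by induction $\llbracket z^+\rrbracket\in\mathscr L\subseteq\mathscr N^L$ and again $\llbracket z\rrbracket\in\mathscr P^L$ by Definition \ref{def:outcome}(a). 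Hence $\llbracket z\rrbracket\in\mathscr N^L\cap\mathscr P^L=\mathscr L$. Note this argument uses neither (i) nor Lemma 2.

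\emph{Lemma 2 (Simplicity Theorem).} If $\mathcal A$ and $\mathcal B$ are finite sets of games of the form $\llbracket a\rrbracket$, $a\in\dyadic$, with $A<B$ for every $\llbracket A\rrbracket\in\mathcal A$ and $\llbracket B\rrbracket\in\mathcal B$ (the one-sided and empty cases read as in Definition \ref{def:number}(i)), and $z$ is the dyadic rational with smallest $\birth(\llbracket z\rrbracket)$ that lies strictly between every such $A$ and every such $B$, then $\{\mathcal A\mid\mathcal B\}=\llbracket z\rrbracket$. Here I would give the standard proof: set $G=\{\mathcal A\mid\mathcal B\}$ and show $G-\llbracket z\rrbracket\in\mathscr P$ by showing that the player to move loses, the move analysis relying on the option description above, Corollary \ref{cor:replace}, the identity $\llbracket q\rrbracket-\llbracket q\rrbracket=\llbracket 0\rrbracket$ from Proposition \ref{prop:negation}(ii), and Lemma 1 to read off the sign of the positions $\llbracket p\rrbracket-\llbracket q\rrbracket$ that arise; this again does not presuppose (i). Granting Lemma 2, part (i) follows by induction on $\birth(\llbracket x\rrbracket)+\birth(\llbracket y\rrbracket)$: if $x=0$ or $y=0$ it reduces to the identity $G+\llbracket 0\rrbracket\cong G$, and otherwise the Left options of $\llbracket x\rrbracket+\llbracket y\rrbracket$ are exactly (those present among) $\llbracket x^-\rrbracket+\llbracket y\rrbracket$ and $\llbracket x\rrbracket+\llbracket y^-\rrbracket$, which by the induction hypothesis equal $\llbracket x^-+y\rrbracket$ and $\llbracket x+y^-\rrbracket$, and similarly for Right options; by Corollary \ref{cor:replace}, $\llbracket x\rrbracket+\llbracket y\rrbracket=\{\llbracket x^-+y\rrbracket,\llbracket x+y^-\rrbracket\mid\llbracket x^++y\rrbracket,\llbracket x+y^+\rrbracket\}$, and Lemma 2 identifies this with $\llbracket z\rrbracket$ for the simplest dyadic $z$ strictly between $\max(x^-+y,\,x+y^-)$ and $\min(x^++y,\,x+y^+)$, so it only remains to verify $z=x+y$. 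Finally (ii): by (i) and (iii), $\llbracket x\rrbracket\leq\llbracket y\rrbracket\iff\llbracket x\rrbracket-\llbracket y\rrbracket\leq\llbracket 0\rrbracket$ (Proposition \ref{prop:order}(iii) and Proposition \ref{prop:negation}) $\iff\llbracket x-y\rrbracket\leq\llbracket 0\rrbracket\iff x-y\leq 0$ (Lemma 1) $\iff x\leq y$.

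The main obstacle is the arithmetic heart of the step for (i): checking that $x+y$ is really the \emph{simplest} dyadic rational strictly between $\max(x^-+y,\,x+y^-)$ and $\min(x^++y,\,x+y^+)$ (and the analogous one-sided statements). This needs an explicit grip on $\birth(\llbracket m/2^n\rrbracket)$ for $m/2^n$ in lowest terms — in effect, that an integer is always simpler than a non-integer, and that among non-integers a smaller power of two in the denominator is simpler — together with the elementary but fiddly dyadic computation that the open interval cut out by those bounds contains $x+y$ but no dyadic of strictly smaller birthday. A secondary, pervasive nuisance is the bookkeeping of the degenerate option sets (the positive integers, the negative integers, $0$, and their sign variants), which recurs in Lemma 1, in the statement and proof of Lemma 2, and in the base cases of the induction for (i); packaging it into the single option description $\llbracket x\rrbracket\cong\{\llbracket x^-\rrbracket\mid\llbracket x^+\rrbracket\}$ at the start is what keeps the case analysis from multiplying.
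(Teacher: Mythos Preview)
The paper itself does not prove Proposition~\ref{prop:number}; it explicitly defers all propositions in Section~\ref{sec:preliminaries} to the textbooks \cite{ANW19,BCG18,Con00,HG16,Sie13}. So there is no in-paper proof to compare against, and your proposal is essentially a reconstruction of the textbook argument. Your treatment of (iii), the uniform option description, and Lemma~1 are all fine, and you have correctly isolated the arithmetic crux of (i).

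There is, however, a genuine circularity in the plan as written. In the proof sketch of Lemma~2 you say you will use ``Lemma~1 to read off the sign of the positions $\llbracket p\rrbracket-\llbracket q\rrbracket$ that arise''. But Lemma~1 only tells you the outcome of a single game $\llbracket z\rrbracket$; it says nothing about a \emph{difference} $\llbracket p\rrbracket-\llbracket q\rrbracket$ unless you already know $\llbracket p\rrbracket-\llbracket q\rrbracket=\llbracket p-q\rrbracket$, which is part~(i) --- precisely the statement you are trying to derive from Lemma~2. Concretely, when Left moves in $G-\llbracket z\rrbracket$ to $\llbracket A\rrbracket-\llbracket z\rrbracket$ with $A<z$, you need $\llbracket A\rrbracket<\llbracket z\rrbracket$ as games; when you argue that $z^{+}$ fails the simplicity condition and hence $z^{+}\geq B$ for some $B$, you then need $\llbracket z^{+}\rrbracket\geq\llbracket B\rrbracket$. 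Both are instances of the forward implication of (ii), not of Lemma~1.

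The fix is the standard reordering: prove the forward direction of (ii), namely $p\leq q\Rightarrow\llbracket p\rrbracket\leq\llbracket q\rrbracket$, \emph{before} Lemma~2, by a direct induction on $\birth(\llbracket p\rrbracket)+\birth(\llbracket q\rrbracket)$ using the recursive characterisation $\llbracket p\rrbracket\leq\llbracket q\rrbracket\iff(\text{no }\llbracket p^{-}\rrbracket\geq\llbracket q\rrbracket)\wedge(\text{no }\llbracket q^{+}\rrbracket\leq\llbracket p\rrbracket)$. This needs neither (i) nor Lemma~2, and once in hand it supplies exactly the comparisons your Simplicity proof requires. The reverse direction of (ii) then follows either from your Lemma~1 plus (i) as you wrote, or directly from the forward direction together with the observation that $p>q$ forces $p^{-}\geq q$ or $q^{+}\leq p$ (using the birthday structure you already invoke for the ``simplest dyadic'' step). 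With that adjustment the outline is sound.
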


Because of Proposition \ref{prop:number}, we can identify a game $\llbracket x \rrbracket \in \short$ with the dyadic rational $x \in \dyadic$.
In this sense, we have $\mathbb{Z} \subsetneq \dyadic \subsetneq \short$.
Accordingly, we simply write $\llbracket x \rrbracket$ as $x$ by an abuse of notation as long as there is no risk of misunderstanding.

A game in $\dyadic$ is called a \emph{number}.
We also refer to a game $X$ such that $X = x$ for some $x \in \dyadic$ as a number.
We can quantify how advantageous a game $G$ is for a player by comparing $G$ with numbers, 
and thus the numbers serve as a measure of the advantage of games.

By Definition \ref{def:number}, for any $x \in \dyadic$ and its Left option $x^L$ (resp.~Right option $x^R$), 
it holds that
\begin{equation}
\label{eq:d21j3vzuqcup}
x^L < x \quad \text{(resp.}~x < x^R\text{)}.
\end{equation}
Namely, playing on $x$ makes the situation more disadvantageous for the player,
and thus both players should avoid playing on $x$ as possible.
It is known as the \emph{Number Avoidance Theorem} that if a player has a winning strategy in a disjunctive sum $X + G$ of a number $X$ and a non-number $G$, then there is a winning move that plays on $G$.
Therefore, there is no need to consider the options that play on $X$.
This yields the following theorem, known as the \emph{Number Transition Theorem}.

\begin{proposition}[Number Translation Theorem]
\label{prop:number-transition}
For any $G, X \in \short$, if $X = x$ for some $x \in \mathbb{D}$ and $G$ is not, then
$G + X = \left\{ G^{\mathcal{L}} + X \relmiddle| G^{\mathcal{R}} + X \right\}$.
\end{proposition}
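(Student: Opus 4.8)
The plan is to prove the equality of game values $G + X = G'$, where $G' \cong \{G^{\mathcal{L}} + X \mid G^{\mathcal{R}} + X\}$. First I would normalize the number: since $X = x$ and $x$ is identified with $\llbracket x\rrbracket$, Proposition~\ref{prop:order}(iii) gives $G + X = G + \llbracket x\rrbracket$, and applying Corollary~\ref{cor:replace} to each option in turn replaces every $G^L_i + X$ and $G^R_i + X$ occurring in $G'$ by $G^L_i + \llbracket x\rrbracket$ and $G^R_i + \llbracket x\rrbracket$ without changing the value; so it suffices to prove $G + \llbracket x\rrbracket = G'_\ast$ with $G'_\ast \cong \{G^{\mathcal{L}} + \llbracket x\rrbracket \mid G^{\mathcal{R}} + \llbracket x\rrbracket\}$, and now every option of the number obeys the strict inequality $\llbracket x\rrbracket^L < \llbracket x\rrbracket < \llbracket x\rrbracket^R$ of~(\ref{eq:d21j3vzuqcup}). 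Since $A = B$ is equivalent to $A + (-B) = \llbracket 0\rrbracket$ (by Propositions~\ref{prop:order} and~\ref{prop:negation}), Corollary~\ref{cor:order-outcome}(i) reduces the goal to $o(D) = \mathscr{P}$ for the difference game $D \cong (G + \llbracket x\rrbracket) + (-G'_\ast)$. Finally, using Definitions~\ref{def:negation} and~\ref{def:number} with Corollary~\ref{cor:replace} one checks that $-G'_\ast$ has the same value as the game built from the non-number $-G$ and the number $-x$ by the same construction, so $-D$ has the same value as the difference game attached to $(-G,-x)$; since $H \in \mathscr{P}^L \iff -H \in \mathscr{P}^R$ (immediate from Definitions~\ref{def:outcome} and~\ref{def:negation}), it is therefore enough to prove the single claim ``$D \in \mathscr{P}^R$'' for every non-number $G$ and every $x \in \dyadic$, and then apply it to $(-G,-x)$ to get $D \in \mathscr{P}^L$, whence $D \in \mathscr{P}$.

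To prove $D \in \mathscr{P}^R$ I would describe Right's winning reply to each Left first move in $D$ (this is what membership in $\mathscr{P}^R$ unwinds to, via Definition~\ref{def:outcome}). If Left moves in the $G$-summand, $G \to G^L$, Right answers in $-G'_\ast$ with the Right option $-(G^L + \llbracket x\rrbracket)$, reaching a position of value $\llbracket 0\rrbracket$ by Proposition~\ref{prop:negation}(ii); symmetrically, if Left moves $-G'_\ast \to -(G^R + \llbracket x\rrbracket)$, Right answers $G \to G^R$. In both lines the position Right reaches has value $\llbracket 0\rrbracket$, hence lies in $\mathscr{P} \subseteq \mathscr{P}^R$, so Right wins. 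The only remaining Left option of $D$ comes from a move inside the number, $\llbracket x\rrbracket \to \llbracket x\rrbracket^L$ with $\llbracket x\rrbracket^L < \llbracket x\rrbracket$; the resulting position is $P \cong (G + \llbracket x\rrbracket^L) + (-G'_\ast)$, and I must show $P \in \mathscr{N}^R$. Writing $\llbracket x\rrbracket^L \cong \llbracket y\rrbracket$ for a dyadic $y < x$ of strictly smaller formal birthday, an induction on $\birth(\llbracket x\rrbracket)$ gives $G + \llbracket y\rrbracket = \{G^{\mathcal{L}} + \llbracket y\rrbracket \mid G^{\mathcal{R}} + \llbracket y\rrbracket\}$; comparing this game with $G'_\ast$ option by option — each $G^L_i + \llbracket y\rrbracket \le G^L_i + \llbracket x\rrbracket$ and each $G^R_i + \llbracket y\rrbracket \le G^R_i + \llbracket x\rrbracket$ by Proposition~\ref{prop:order}(iii) and~(\ref{eq:d21j3vzuqcup}) — the Replacement Lemma yields $G + \llbracket y\rrbracket \le G'_\ast$, i.e.\ $P \le \llbracket 0\rrbracket$.

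This last inequality only gives $o(P) \in \{\mathscr{P},\mathscr{R}\}$, whereas $P \in \mathscr{N}^R$ needs $o(P) = \mathscr{R}$; so what actually remains is the \emph{strict} inequality $G + \llbracket y\rrbracket < G'_\ast$, which (via Corollary~\ref{cor:order-outcome}(iii)) gives $o(P) = \mathscr{R} \subseteq \mathscr{N}^R$ and completes the argument. This strict inequality is the step carrying the real weight: it is equivalent to ruling out that a move inside the number can land on a $\mathscr{P}$-position of the difference game, which is precisely the phenomenon captured by the Number Avoidance Theorem — a move made within a number is never a winning move when the remaining components do not form a number — and it is here, and only here, that the hypothesis that $G$ (hence $G'_\ast$ too) is not equal to a number is genuinely used. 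I would obtain $G + \llbracket y\rrbracket < G'_\ast$ from that theorem. Everything else — the normalization of $X$, the reformulation of ``$=$'' as ``$o(D) = \mathscr{P}$'', the symmetry reduction, and the two mirroring cases — is routine bookkeeping with the disjunctive sum, the negative, the Replacement Lemma, and the order–outcome dictionary of Corollary~\ref{cor:order-outcome}.
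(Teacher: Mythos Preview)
The paper does not prove this proposition at all; it is stated as a known result, with proofs deferred to the standard textbooks and only the informal remark that it ``yields'' from the Number Avoidance Theorem. So there is no paper proof to compare against, and your sketch is already more detailed than what the paper offers.

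That said, your argument has a genuine gap at exactly the point you flag as ``the step carrying the real weight''. You correctly reduce everything to showing that the Left option $P \cong (G + \llbracket y\rrbracket) - G'_\ast$ (with $y = x^L$) lies in $\mathscr{N}^R$, and you correctly observe that the Replacement Lemma plus induction give only $P \le 0$, not $P < 0$. But your proposed fix --- ``I would obtain $G + \llbracket y\rrbracket < G'_\ast$ from [Number Avoidance]'' --- does not go through as stated. Number Avoidance says that \emph{if} a winning first move exists in a sum (number)$\,+\,$(non-number), \emph{then} one exists in the non-number component; it does not assert that a move in the number is \emph{strictly} losing, which is what $P < 0$ requires. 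Worse, to apply Number Avoidance to the difference game $D = G + \llbracket x\rrbracket + (-G'_\ast)$ you must exhibit the non-number summand, and the natural candidate $G + (-G'_\ast)$ is in fact equal to the number $-x$ (since, once the theorem is true, $G'_\ast = G + x$). So the hypothesis ``the remaining components do not form a number'' that you invoke is precisely what fails. The textbook arguments (e.g.\ Siegel) avoid this circularity either by organizing the induction differently or by using the comparison criterion $A \le B \iff$ (no $A^L \ge B$ and no $B^R \le A$) together with the general fact $K \not\ge K^L$, $K \not\le K^R$, so that the only case left is $G + x^L \ge H$, which is then dispatched by the inductive hypothesis and the strict inequality $x^L < x$ --- but that last step still needs an explicit argument that you have not supplied.
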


Next, we introduce a special class $\dicot$ of games called \emph{dicotic games}, in which one player has options if and only if the other player also does.
The formal definition is as follows.

\begin{definition}
The set $\dicot$ is defined as the smallest subset $\mathcal{G}$ of $\short$ satisfying the following conditions (a) and (b).
\begin{enumerate}[(a)]
\item $0 \in \mathcal{G}$.
\item For any $G \in \short$, if $\emptyset \neq G^{\mathcal{L}} \subseteq \mathcal{G}$ and $\emptyset \neq G^{\mathcal{R}} \subseteq \mathcal{G}$, then $G \in \mathcal{G}$.
\end{enumerate}
An element of $\dicot$ is said to be \emph{dicotic} (or \emph{all-small}).
\end{definition}

A dicotic game has an ``infinitesimal'' game value as follows.

\begin{proposition}[Lawnmower Theorem]
\label{prop:lawnmower}
For any $G \in \dicot$ and positive $x \in \dyadic$, we have $-x < G < x$.
\end{proposition}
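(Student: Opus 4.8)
The plan is to first prove the one-sided inequality $G < x$ for every $G \in \dicot$ and every positive $x \in \dyadic$, and then to deduce the two-sided statement. The deduction is short: $\dicot$ is closed under negation (an immediate induction using the definition of $\dicot$ together with $-G \cong \{-(G^{\mathcal{R}}) \mid -(G^{\mathcal{L}})\}$), and $A < B$ implies $-B < -A$ by Propositions \ref{prop:order} and \ref{prop:negation}; so applying the one-sided inequality to $-G \in \dicot$ gives $-G < x$, hence $-x < G$.

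For the one-sided inequality I would induct on the formal birthday $\birth(G)$. Two elementary facts make the induction run: every option of a dicotic game is again dicotic and has strictly smaller formal birthday, and a dicotic game $G$ with $G \not\cong \llbracket 0 \rrbracket$ has both $G^{\mathcal{L}} \neq \emptyset$ and $G^{\mathcal{R}} \neq \emptyset$. The base case $\birth(G) = 0$ forces $G \cong \llbracket 0 \rrbracket$, for which $-x < 0 < x$ holds by Proposition \ref{prop:number}. For the inductive step I split into two claims, both read via Corollary \ref{cor:order-outcome} as statements about who wins $G - x$. Claim~1: $G \not\ge x$ for every positive $x \in \dyadic$; equivalently, Right wins $G - x$ moving first. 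If $G \cong \llbracket 0 \rrbracket$ this is clear since $G - x < 0$. Otherwise Right moves to $G^R - x$ for some $G^R \in G^{\mathcal{R}}$; the induction hypothesis gives $G^R < x$, hence $G^R - x < 0$, so $o(G^R - x) = \mathscr{R}$ and Right wins. Claim~2: $G \le x$ for every positive $x \in \dyadic$; equivalently, Right wins $G - x$ when Left moves first. Every Left option of $G - x$ has the form $G^L - x$ with $G^L \in G^{\mathcal{L}}$, or $G - x^R$ with $x^R \in x^{\mathcal{R}}$ (the latter only when $x \notin \mathbb{Z}$), and if there is none then Left loses at once. In the first case the induction hypothesis gives $G^L < x$, so $G^L - x < 0$ and Right, to move, wins. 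In the second case $x^R > x > 0$ is again a positive dyadic rational, so Claim~1, already available for this very $G$, gives $G \not\ge x^R$, whence $G - x^R \not\ge 0$ and Right, to move, wins. Claims~1 and~2 together give $G < x$, closing the induction.

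The point requiring care is the second case of Claim~2: after Left plays on the number component, the component $G$ is untouched, so $\birth(G)$ has not decreased and the induction hypothesis cannot be invoked for $G$ itself. The remedy, and the reason the argument is organized this way, is that Claim~1 is established first and its proof uses the induction hypothesis only on the options of $G$, so Claim~1 holds for $G$ within the same inductive step and supplies exactly what the number move needs. Everything else is routine bookkeeping with Corollary \ref{cor:order-outcome} and Proposition \ref{prop:number}.
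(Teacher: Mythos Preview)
The paper does not supply its own proof of this proposition: it is listed among the ``basic definitions and some known results'' whose proofs are explicitly deferred to the standard textbooks (\cite{ANW19, BCG18, Con00, HG16, Sie13}). So there is no paper proof to compare against.

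Your argument is correct and is essentially the standard textbook proof. The organization---proving Claim~1 for all positive $x$ first using only the induction hypothesis on options of $G$, and then invoking Claim~1 for the same $G$ but the larger number $x^R$ inside Claim~2---is exactly the right fix for the issue you flagged, and it is how the usual proofs handle the Left-moves-on-the-number case. One cosmetic point: once you declare the inductive step is for $\birth(G) \geq 1$, the game $G$ is automatically $\not\cong \llbracket 0 \rrbracket$, so the separate ``If $G \cong \llbracket 0 \rrbracket$'' subcase inside Claim~1 is redundant (harmless, but you can drop it).
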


As important examples of dicotic games, we introduce a \emph{star} and \emph{uptimals} in the following Definitions \ref{def:star} and \ref{def:uptimal}.

\begin{definition}
\label{def:star}
A game $\st \in \short$, called a \emph{star}, is defined as $\st \defeq{\cong} \{ 0 \mid 0 \}$.
\end{definition}

The star $\ast$ satisfies the following property directly from the definition.

\begin{proposition}
\label{prop:star}
The star $\st$ is dicotic and satisfies $\st \parallel 0$ and $\st + \st = 0$.
\end{proposition}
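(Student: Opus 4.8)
The plan is to dispatch the three assertions one by one, each of them essentially unwinding a definition.

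\textbf{Dicoticity.} First I would observe that $\st^{\mathcal{L}} = \st^{\mathcal{R}} = \{0\}$, which is nonempty, and that $\llbracket 0 \rrbracket \in \dicot$ by clause (a) of the definition of $\dicot$. Applying clause (b) to $\st$ then gives $\st \in \dicot$ directly.

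\textbf{The relation $\st \parallel 0$.} By Proposition~\ref{prop:order-outcome}(iv) (equivalently, Corollary~\ref{cor:order-outcome}(iv)), it suffices to show $o(\st) = \mathscr{N}$, i.e., $\st \in \mathscr{N}^L \cap \mathscr{N}^R$. Recall that $\llbracket 0 \rrbracket \in \mathscr{P} = \mathscr{P}^L \cap \mathscr{P}^R$, as noted just after Definition~\ref{def:outcome}. Since $0 \in \st^{\mathcal{L}}$ and $0 = \llbracket 0 \rrbracket \in \mathscr{P}^L$, clause (b) of Definition~\ref{def:outcome} gives $\st \in \mathscr{N}^L$; symmetrically, since $0 \in \st^{\mathcal{R}}$ and $0 \in \mathscr{P}^R$, clause (d) gives $\st \in \mathscr{N}^R$. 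Hence $o(\st) = \mathscr{N}$ and $\st \parallel 0$.

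\textbf{The relation $\st + \st = 0$.} The cleanest route is to observe that $-\st \cong \st$. Indeed, since $\llbracket 0 \rrbracket = (\emptyset,\emptyset)$ and $-\emptyset = \emptyset$, Definition~\ref{def:negation} gives $-\llbracket 0 \rrbracket \cong \llbracket 0 \rrbracket$, and then $-\st \cong \{-(\st^{\mathcal{R}}) \mid -(\st^{\mathcal{L}})\} \cong \{ -\llbracket 0 \rrbracket \mid -\llbracket 0 \rrbracket \} \cong \{0 \mid 0\} \cong \st$. Therefore $\st + \st \cong \st + (-\st)$, and Proposition~\ref{prop:negation}(ii) yields $\st + (-\st) = 0$, so $\st + \st = 0$. (Alternatively one can argue directly that the second player wins $\st + \st$: whoever moves first must collapse one component to $0$, leaving a single $\st$ in which the opponent moves first and then wins by collapsing it to $0$ as well; hence $o(\st + \st) = \mathscr{P}$, and Proposition~\ref{prop:order-outcome}(i) finishes it.)

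None of these steps presents a genuine obstacle; the only point requiring care is the bookkeeping of the distinction between literal equality of game trees ($\cong$) and equality of game values ($=$) — in particular one must justify $-\llbracket 0 \rrbracket \cong \llbracket 0 \rrbracket$ at the level of trees before using it to rewrite $\st + \st$ as $\st + (-\st)$.
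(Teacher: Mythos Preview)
Your proof is correct. The paper itself does not give an explicit proof of this proposition, stating only that the properties follow ``directly from the definition''; your argument is exactly the natural unpacking of those definitions and uses the same ingredients (the recursive clauses defining $\dicot$, the outcome-class computation via Definition~\ref{def:outcome} and Proposition~\ref{prop:order-outcome}, and the observation $-\st \cong \st$ together with Proposition~\ref{prop:negation}(ii)).
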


\begin{definition}
\label{def:uptimal}
For any integer $k \geq 1$, the game $\up^k$, called an \emph{up-$k$th}, is defined recursively as
\begin{equation*}
\up^k \defeq{\cong} \left\{ 0 \relmiddle| \st - \sum_{i = 1}^{k-1} \up^i \right\}.
\end{equation*}
A game in the form $\sum_{i = 1}^k a_i \up^i$ for some integer $k \geq 1$ and a sequence $(a_1, a_2, \ldots, a_k)$ of integers is called an \emph{uptimal}.
The game $\up^1 \cong \left\{0 \relmiddle| \st \right\}$ is simply denoted by $\up$ and called an \emph{up}.
Also, $\down^k$ (resp.~$\down$) denotes $-\up^k$ (resp.~$-\up$) and is called a \emph{down-$k$th} (resp.~\emph{down}), that is,
\begin{equation}
\label{eq:9f1ueuce84aw}
\down^k \defeq{\cong} \left\{  \st - \sum_{i = 1}^{k-1} \down^i \relmiddle|  0\right\}.
\end{equation}
\end{definition}

The following Propositions \ref{prop:uptimal-comp} and \ref{prop:uptimal-star} give the order relation among uptimals and a star.

\begin{proposition}
For any integer $k \geq1$, the following statements (i) and (ii) hold.
\label{prop:uptimal-comp}
\begin{enumerate}[(i)]
\item For any positive $x \in \dyadic$, we have $0 < \up^k < x$.
\item For any integer $n \geq 0$, we have $n \cdot \up^{k+1} < \up^k$.
\end{enumerate}
\end{proposition}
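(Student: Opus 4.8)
The plan is to establish the three assertions $\up^k < x$, $\up^k > 0$, and $n\cdot\up^{k+1} < \up^k$ in turn; only the last requires real work. For the bound $\up^k < x$ I would first check that every $\up^k$ is dicotic: a routine induction on the formal birthday shows that $\dicot$ is closed under disjunctive sum and under negation, and then, since $0,\st\in\dicot$, induction on $k$ gives $\up^k\in\dicot$, because its two options $0$ and $\st-\sum_{i=1}^{k-1}\up^i$ are both dicotic. The Lawnmower Theorem (Proposition~\ref{prop:lawnmower}) then yields $\up^k<x$ for every positive $x\in\dyadic$ at once.

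For $\up^k>0$, equivalently $o(\up^k)=\mathscr{L}$ by Corollary~\ref{cor:order-outcome}, I would argue directly, with no induction on $k$. If Left moves first in $\up^k$, Left wins by moving to $0\in\mathscr{P}$, so $\up^k\in\mathscr{N}^L$. If Right moves first, the only option is $\st-\sum_{i=1}^{k-1}\up^i$, and there Left (to move) wins: for $k=1$ the position is $\st$ and Left moves to $0$; for $k\ge 2$, Left moves inside the summand $-\up^{k-1}$, whose Left option is $\st+\sum_{j=1}^{k-2}\up^j$, reaching $\st+\bigl(\st+\sum_{j=1}^{k-2}\up^j\bigr)+\sum_{i=1}^{k-2}(-\up^i)$, which collapses to value $0$ (the two copies of $\st$ cancel by Proposition~\ref{prop:star}, each $\up^j$ against $-\up^j$ by Proposition~\ref{prop:negation}) and so has outcome $\mathscr{P}$. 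Hence $\up^k\in\mathscr{P}^L$ as well, so $\up^k\in\mathscr{L}$.

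For~(ii) I would fix $k$, rewrite the claim as $\up^k-n\cdot\up^{k+1}>0$, i.e.\ $\up^k+n\cdot\down^{k+1}>0$, and induct on $n$, the base $n=0$ being~(i). The inductive step forces one to follow the positions reachable in a single move, and these form a small family that must be handled simultaneously: besides $\up^k+n\cdot\down^{k+1}$ itself, one needs $\st+\sum_{i=1}^{k-1}\up^i+2\up^k+n\cdot\down^{k+1}$ (reached when Left plays in a copy of $-\up^{k+1}$) and $\st-\sum_{i=1}^{k-1}\up^i+n\cdot\down^{k+1}$ (reached when Right plays in $\up^k$). The claim to push through is that the first two of these are $>0$ and the third lies in $\mathscr{N}^L$. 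In every case, each relevant option — after cancelling pairs $\st+\st$ and $G+(-G)$ — simplifies either to $0$, or to a member of the same three-position family with a strictly smaller $n$ (available from the previous round), or to a sum of the shape $(\up^k+m\cdot\down^{k+1})+(\text{a sum of various }\up^i)$ with $m\le n$, which is positive because $\up^k+m\cdot\down^{k+1}>0$ is already in hand and every $\up^i>0$ by~(i). Reading $\up^k+n\cdot\down^{k+1}>0$ off the induction is exactly the desired statement.

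The hard part is precisely the bookkeeping in this last step: isolating a family of auxiliary positions genuinely closed under "make one move and simplify", ordering the simultaneous induction so that every dependency strictly decreases $n$, and — a point easy to overlook — noticing that $\st+\sum_{i=1}^{k-1}\up^i+2\up^k+n\cdot\down^{k+1}$ must be proved strictly positive (outcome $\mathscr{L}$), not merely $\ge 0$, since a smaller-$n$ instance of it occurs as a \emph{Right} option of a larger instance and hence has to be a Left win even with Left to move. Everything else — the closure of $\dicot$, the membership $\up^k\in\dicot$, and the individual one-move simplifications — is routine.
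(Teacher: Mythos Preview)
The paper does not supply its own proof of this proposition; it is quoted as a known result with proofs deferred to the standard references. There is therefore no in-paper argument to compare against, and your sketch is a reasonable self-contained direct proof.

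Your outline is essentially correct, but one claim is inaccurate. You assert that in the simultaneous induction ``every dependency strictly decreases $n$'' and that each relevant option lands in the three-position family with a \emph{strictly smaller} index. This fails in two places: when Right moves in $\up^k$ from $G_1(n)\defeq\cong\up^k+n\cdot\down^{k+1}$, the result is your third position $G_3(n)\defeq\cong\st-\sum_{i=1}^{k-1}\up^i+n\cdot\down^{k+1}$ with the \emph{same} index $n$; and several Right options of $G_2(n)\defeq\cong\st+\sum_{i=1}^{k-1}\up^i+2\up^k+n\cdot\down^{k+1}$ reduce (after cancellation) to $(\up^k+n\cdot\down^{k+1})+(\text{a sum of }\up^i)$, again with $m=n$. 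The actual dependency graph within step $n$ is
\[
G_3(n)\ \leftarrow\ G_1(n-1),\qquad
G_1(n)\ \leftarrow\ G_3(n),\,G_2(n-1),\,G_1(n-1),\qquad
G_2(n)\ \leftarrow\ G_1(n),\,G_2(n-1),
\]
which is acyclic, so the argument goes through provided the three claims at level $n$ are established in the order $G_3(n)$, then $G_1(n)$, then $G_2(n)$. Make this ordering explicit and drop the phrase ``strictly decreases $n$''; otherwise the plan is sound.
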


Proposition \ref{prop:uptimal-comp} is expressed concisely as
\begin{equation}
\label{eq:6edoguogaafz}
0 \ll \cdots \ll \up^3 \ll \up^2 \ll \up \ll 1,
\end{equation}
where $G \ll H$ represents that $n\cdot G < H$ holds for any integer $n \geq 0$.
In particular, an up-$k$th has an infinitesimally positive game value.

\begin{proposition}
\label{prop:uptimal-star}
For any integer $k \geq 1$, we have
\begin{equation*}
\sum_{i = 1}^k \up^i \parallel \st < \sum_{i = 1}^k \up^i + \up^k.
\end{equation*}
\end{proposition}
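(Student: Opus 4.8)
The plan is to translate both halves of the statement into assertions about outcome classes and then verify them by exhibiting explicit strategies; no induction on $k$ is needed. Set $S_k := \sum_{i=1}^k \up^i$ and recall $\st = -\st$ (Proposition \ref{prop:star}). Then $-(S_k + \st) = \st - S_k$ and $(S_k + \up^k) - \st = S_k + \up^k + \st$, so by Corollary \ref{cor:order-outcome} the assertion $S_k \parallel \st$ is equivalent to $S_k + \st \in \mathscr{N}$, and $\st < S_k + \up^k$ is equivalent to $S_k + \up^k + \st \in \mathscr{L}$, i.e. $S_k + \up^k + \st > 0$. The one fact I would establish up front is that a disjunctive sum of finitely many positive games is positive — immediate from Proposition \ref{prop:order}(iii) and transitivity — so that, via Proposition \ref{prop:uptimal-comp}(i), every nonempty sub-sum $\sum_{i \in I}\up^i$ is $>0$; in particular $S_k > 0$ and $S_k + \up^k > 0$. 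I also record that $\up^k$ has the unique Right option $\st - S_{k-1}$, with $S_0 := 0$ (Definition \ref{def:uptimal}).

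For $\st < S_k + \up^k$, i.e. $S_k + \up^k + \st > 0$: if Left moves first she plays $\st \to 0$ and reaches $S_k + \up^k > 0$, winning. If Right moves first I split on his move. Playing $\st \to 0$ again leaves $S_k + \up^k > 0$. Playing in some component $\up^j$ converts it to $\st - S_{j-1}$; the two copies of $\st$ then cancel ($\st + \st = 0$) and the position simplifies to $\sum_{i=j+1}^k \up^i + \up^k$ when the moved $\up^j$ lies inside $S_k$ (reading $j=k$ for the $\up^k$-summand of $S_k$), and to $S_k - S_{k-1} = \up^k$ when Right moves the extra copy of $\up^k$. In every case this is a nonempty sum of positive games, hence $>0$. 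Thus after any Right move Left is to move in a position $>0$ and wins, so $S_k + \up^k + \st \in \mathscr{N}^L \cap \mathscr{P}^L = \mathscr{L}$.

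For $S_k \parallel \st$, i.e. $S_k + \st \in \mathscr{N}$: if Left moves first she plays $\st \to 0$ and reaches $S_k > 0$, winning. If Right moves first he plays the copy of $\up^k$ inside $S_k$ to its Right option $\st - S_{k-1}$, yielding a position equal to $(S_k - \up^k) + (\st - S_{k-1}) + \st = S_{k-1} - S_{k-1} + (\st + \st) = \llbracket 0 \rrbracket$, in which Left (to move) loses. Hence the first player wins in $S_k + \st$, whoever that is, giving $S_k + \st \in \mathscr{N}$.

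I do not anticipate a real obstacle: once the claims are phrased via outcome classes the argument is a short, self-contained strategy computation. The points demanding care are the order/outcome dictionary — in particular using $-\st = \st$ so that $S_k + \st$, $\st - S_k$ and $S_k - \st$ have interrelated outcomes — and the bookkeeping in the Right-to-move branch of the first claim: one must see that each of Right's moves either collapses the spare $\st$ harmlessly or replaces some $\up^j$ by $\st - S_{j-1}$, whose $\st$ annihilates the spare $\st$ and leaves a visibly positive sum of up-$i$ths, so that Right can never reach $\llbracket 0 \rrbracket$ or anything $\le 0$ there; by contrast, in $S_k + \st$ Right reaches $\llbracket 0 \rrbracket$ exactly by acting on the top term $\up^k$.
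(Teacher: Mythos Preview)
The paper does not give its own proof of Proposition~\ref{prop:uptimal-star}; it is one of the preliminary results whose proofs are explicitly ``left to the textbooks'' cited in Section~\ref{sec:preliminaries}. So there is nothing to compare against.

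That said, your argument is correct. The dictionary reductions are right: using $-\st = \st$ (equivalently $\st + \st = 0$ from Proposition~\ref{prop:star}), $S_k \parallel \st$ is equivalent to $o(S_k + \st) = \mathscr{N}$, and $\st < S_k + \up^k$ is equivalent to $o(S_k + \up^k + \st) = \mathscr{L}$. In the $\mathscr{L}$ claim your case analysis of Right's moves is complete (the only Right option of $\up^j$ is $\st - S_{j-1}$, the only Right option of $\st$ is $0$), and after cancelling the two $\st$'s each resulting value is a nonempty sum of $\up^i$'s, hence $>0$ by Proposition~\ref{prop:uptimal-comp}(i) and monotonicity of $+$. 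In the $\mathscr{N}$ claim, Left's move $\st \to 0$ wins moving first, and Right's move on the $\up^k$ summand collapses the position exactly to $0$, so Right wins moving first. Both halves are sound; the only cosmetic point is that the sentence computing $-(S_k+\st)$ is not actually used and could be dropped.
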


The following notation called \emph{uptimal notation} is useful to express the relation in (\ref{eq:6edoguogaafz}) intuitively:
\begin{eqnarray*}
0.a_1a_2\ldots a_k &\defeq{\cong}& \sum_{i = 1}^k a_i \cdot \up^k,
\end{eqnarray*}
where $a_1, \ldots, a_k$ are non-negative integers.
For example, $\up^2 + \up^2 + \up^4 + \up^4 + \up^4$ is denoted as $0.0203$ in the uptimal notation,
and Proposition \ref{prop:uptimal-star} is expressed as $0.11{\ldots} \parallel \st < 0.11{\ldots}12$ intuitively.

The game values appearing in this paper are limited to the form of
$x + \sum_{i = 1}^k b_i \cdot \up^i + c\cdot\st$
for some $x \in \dyadic$, an integer $k \geq 0$, $c \in \{0, 1\}$, and $b_1, b_2, \ldots, b_k \in \mathbb{Z}$.
Note that $n \cdot \st = (n \bmod 2) \cdot \st$ for any integer $n$.
Regarding this form, the following proposition holds.

\begin{proposition}
\label{prop:uptimal-unique}
For any integer $k \geq 0$, $x, x' \in \dyadic$, $c, c' \in \{0, 1\}$ and two sequences $(b_1, b_2, \ldots, b_k)$ and $(b'_1, b'_2, \ldots, b'_k)$ of integers,
the games 
$G \defeq\cong x + c\cdot\st + \sum_{i = 1}^k b_i \cdot \up^i$ and 
$G' \defeq\cong x' + c'\cdot\st + \sum_{i = 1}^k b'_i \cdot \up^i$
satisfy $G = G'$ if and only if
$(x, c, b_1, b_2, \ldots, b_k) = (x', c', b'_1, b'_2, \ldots, b'_k)$.
\end{proposition}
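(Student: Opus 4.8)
The ``if'' direction is immediate: equal tuples make $G$ and $G'$ the same expression, so $G\cong G'$ and hence $G=G'$. For the ``only if'' direction the plan is to work inside the abelian group $\Short$ (Proposition~\ref{prop:negation}) and reduce the claim to a single identity. I would put $y\defeq{=}x-x'\in\dyadic$ and $e_i\defeq{=}b_i-b'_i\in\mathbb{Z}$, and observe that since $\st+\st=0$ (Proposition~\ref{prop:star}) gives $-\st=\st$, one has $c\cdot\st-c'\cdot\st=d\cdot\st$ with $d\defeq{=}(c+c')\bmod 2=|c-c'|\in\{0,1\}$. Then $G=G'$ is equivalent to $y+d\cdot\st+\sum_{i=1}^{k}e_i\cdot\up^i=0$, and since $d=0\iff c=c'$, it suffices to prove that this identity forces $y=0$, $d=0$, and $e_1=\dots=e_k=0$.

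The technical core, which I expect to be the main obstacle, is a trichotomy lemma for uptimals: for any integers $e_1,\dots,e_k$, the uptimal $U\defeq{\cong}\sum_{i=1}^{k}e_i\cdot\up^i$ satisfies exactly one of $U>0$, $U<0$, $U=0$, and $U=0$ holds precisely when every $e_i=0$. The argument: if some $e_i$ is nonzero, let $j$ be the least index with $e_j\neq 0$; negating $U$ (and using $-\up^i=\down^i$) we may assume $e_j\geq 1$. Since $\up^j>0$ (Proposition~\ref{prop:uptimal-comp}(i)) we get $(e_j-1)\cdot\up^j\geq 0$, hence $e_j\cdot\up^j\geq\up^j$; and since the $\up^i$ are decreasing in $i$ (Proposition~\ref{prop:uptimal-comp}(ii) with $n=1$) we have $0<\up^i\leq\up^{j+1}$ for $i>j$, so $e_i\cdot\up^i\geq-|e_i|\cdot\up^{j+1}$ for every $i>j$ and therefore $\sum_{i>j}e_i\cdot\up^i\geq-M\cdot\up^{j+1}$ with $M\defeq{=}\sum_{i>j}|e_i|$. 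Thus $U\geq\up^j-M\cdot\up^{j+1}>0$ by Proposition~\ref{prop:uptimal-comp}(ii) applied with $n=M$. This gives $U>0$ when $e_j\geq 1$ and, by symmetry, $U<0$ when $e_j\leq -1$; in particular $U$ is always comparable with $0$, and $U=0$ iff all $e_i$ vanish.

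With the lemma available, I would finish as follows. The game $d\cdot\st+\sum_{i=1}^{k}e_i\cdot\up^i$ is a disjunctive sum of dicotic games (the star and copies of $\up^i$ and $\down^i$), hence is itself dicotic (closure of $\dicot$ under disjunctive sum and negation is immediate from the definition), so by the Lawnmower Theorem (Proposition~\ref{prop:lawnmower}) it lies strictly between $-z$ and $z$ for every positive $z\in\dyadic$. But by the reduced identity this game equals $-y$; if $y\neq 0$ then one of $y,-y$ is a positive dyadic $z_0$, and feeding $z=z_0$ into $-z_0<-y<z_0$ yields $-z_0<-z_0$ or $z_0<z_0$, a contradiction. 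Hence $y=0$, leaving $d\cdot\st+U=0$ with $U\defeq{\cong}\sum_{i=1}^{k}e_i\cdot\up^i$. If $d=1$ then $U=-\st=\st$, so $U\parallel 0$ because $\st\parallel 0$ (Proposition~\ref{prop:star}), contradicting the trichotomy lemma; thus $d=0$. Finally $U=0$, and the lemma forces $e_1=\dots=e_k=0$. Unwinding the substitutions gives $x=x'$, $c=c'$, and $b_i=b'_i$ for every $i$, completing the proof. Apart from the trichotomy lemma, every step is routine bookkeeping on top of the cited propositions; the one remaining point requiring care is the mod-$2$ reduction of the star coefficient, handled above via $-\st=\st$.
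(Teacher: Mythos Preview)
The paper does not prove Proposition~\ref{prop:uptimal-unique}: it is one of the background results in Section~\ref{sec:preliminaries} whose proofs are explicitly deferred to the standard textbooks. So there is no ``paper's own proof'' to compare against.

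Your argument is correct and self-contained. The reduction to $y+d\cdot\st+U=0$ via the group structure is clean, and your trichotomy lemma for $U=\sum_i e_i\cdot\up^i$ is exactly the right engine: the bound $\sum_{i>j}e_i\cdot\up^i\geq -M\cdot\up^{j+1}$ together with Proposition~\ref{prop:uptimal-comp}(ii) yields $U\geq\up^j-M\cdot\up^{j+1}>0$, so $U$ is always comparable to $0$ and vanishes only when all $e_i$ do. The Lawnmower step then isolates $y$, and the incomparability $\st\parallel 0$ against the trichotomy kills $d=1$. Two minor remarks: the claim that $\dicot$ is closed under disjunctive sum and negation is standard but is an easy induction rather than literally ``immediate from the definition''; and when you write $0<\up^i\leq\up^{j+1}$ for $i>j$, you are using the chain $\up^i<\up^{i-1}<\cdots<\up^{j+1}$ obtained by iterating Proposition~\ref{prop:uptimal-comp}(ii), which is fine but worth stating. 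Neither point is a gap.
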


\begin{remark}
The outcome class of a game $G$ in the form of Proposition \ref{prop:uptimal-unique} can be determined by considering the following three cases: the case $x > 0$, the case $x < 0$, and the case $x \cong 0$.
\begin{itemize}
\item The case $x > 0$: Since $c \cdot \st + \sum_{i=1}^k b_i\cdot \up^i$ is dicotic, we have $G = x + c \cdot \st + \sum_{i=1}^k b_i\cdot \up^i > 0$ by Proposition \ref{prop:lawnmower}, and thus $o(G) = \mathscr{L}$.
\item The case $x < 0$: By the symmetric argument to the case $x > 0$, we obtain $G < 0$ and thus $o(G) = \mathscr{R}$.
\item The case $x \cong 0$: If $c = b_1 = b_2 = \cdots = b_n = 0$, then $G = 0$ and thus $o(G) = \mathscr{P}$.
If not, we can determine $o(G) = \mathscr{N}$, $o(G) = \mathscr{L}$, or $o(G) = \mathscr{R}$ by Propositions \ref{prop:uptimal-comp} and \ref{prop:uptimal-star}.
\end{itemize}
\end{remark}

In addition to the general definitions and results of combinatorial game theory described above,
we introduce a definition and a lemma unique to this paper to state our main results.

\begin{definition}
\label{def:uptimal-deg}
Let $G$ be a game such that
$G \cong c\cdot\st + \sum_{i = 1}^k b_i \cdot\down^i$
for some integer $k \geq 0$ and sequence $(c, b_1, b_2, \ldots, b_k)$ of non-negative integers.
Then we define $\deg(G)$ as the maximum integer $i \geq 1$ such that $b_i \geq 1$,
where $\deg(0) \defeq{=} \deg(\st) \defeq{=} 0$.
\end{definition}

\begin{lemma}
\label{lem:uptimal-option}
Let $G$ be a game such that $G \cong c\cdot\st + \sum_{i = 1}^k b_i \cdot\down^i$
for some integer $k \geq 0$ and sequence $(c, b_1, b_2, \ldots, b_k)$ of non-negative integers.
If $d \defeq= \deg(G) \geq 1$, then the following two statements (i) and (ii) hold.
\begin{enumerate}[(i)]
\item For any $G^L \in G^{\mathcal{L}}$, we have $G^L \leq G - \sum_{i = 1}^d \down^i + \st$.
\item For any $G^R \in G^{\mathcal{R}}$, we have at least one of $G^R = G + \st$ and $G^R \geq G - \down^d$.
\end{enumerate}
\end{lemma}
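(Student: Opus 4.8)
The plan is to analyze the Left and Right options of $G \cong c\cdot\st + \sum_{i=1}^k b_i\cdot\down^i$ directly from the recursive definitions of $\st$ and $\down^i$, and to reduce every case to a comparison between uptimals, which can be settled by Propositions \ref{prop:uptimal-comp}, \ref{prop:uptimal-star}, and \ref{prop:uptimal-unique} together with the Replacement Lemma. Write $d = \deg(G)$. The key observation is that $G$ is a disjunctive sum of the summands $c\cdot\st$, $b_d\cdot\down^d$, and $J \defeq= \sum_{i=1}^{d-1} b_i\cdot\down^i$ (the summands with $i > d$ vanish since $b_i = 0$ there, and $\deg(J) \le d-1$). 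A Left or Right option of $G$ is obtained by replacing exactly one summand by one of its options and leaving the rest; since $b_d \ge 1$, the summand $b_d\cdot\down^d$ is present and contributes the option $\down^d \mapsto \st - \sum_{i=1}^{d-1}\down^i$ from (\ref{eq:9f1ueuce84aw}) for Left, and no Left option for the copies of $\st$ other than $\st \mapsto 0$ when $c = 1$. So I would enumerate: (1) Left moves inside $c\cdot\st$ (only if $c=1$): $G^L = G - \st$; (2) Left moves inside $b_d\cdot\down^d$: $G^L = G - \down^d + \st - \sum_{i=1}^{d-1}\down^i$; (3) Left moves inside $J$: $G^L = J^L + b_d\cdot\down^d + c\cdot\st$, handled by induction on $d$.

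For part (i), I need $G^L \le G - \sum_{i=1}^d\down^i + \st$. In case (1), this reads $G - \st \le G - \sum_{i=1}^d\down^i + \st$, i.e. $0 \le \sum_{i=1}^d\down^i$ (after cancelling $G$ and using $\st + \st = 0$ from Proposition \ref{prop:star}, i.e. $-\st = \st$), which fails in general since $\sum\down^i < 0$ — so actually I expect the correct reading to use $-\st = \st$ so the target is $G - \sum_{i=1}^d\down^i + \st$, and case (1) needs $-\st \le -\sum_{i=1}^d\down^i+\st$, i.e. $0 \le 2\st - \sum\down^i = -\sum\down^i$ (since $2\st = 0$), which holds because each $\down^i > 0$ is false — $\down^i < 0$. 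Let me instead observe $\sum_{i=1}^d\down^i = -\sum_{i=1}^d\up^i$, so the target is $G + \sum_{i=1}^d\up^i + \st$; case (1) then needs $-\st \le \sum\up^i + \st$, i.e. $0 \le \sum\up^i + 2\st = \sum\up^i$, true by Proposition \ref{prop:uptimal-comp}(i). Case (2) needs $-\down^d + \st - \sum_{i=1}^{d-1}\down^i \le \sum_{i=1}^d\up^i + \st$, i.e. $\up^d + \sum_{i=1}^{d-1}\up^i \le \sum_{i=1}^d\up^i$, which is equality — fine. Case (3): by the inductive hypothesis applied to $J$ (with $\deg(J) = d' \le d-1$), $J^L \le J + \sum_{i=1}^{d'}\up^i + \st$, and I must check $J + \sum_{i=1}^{d'}\up^i + \st + b_d\cdot\down^d + c\cdot\st \le G + \sum_{i=1}^d\up^i + \st$; after cancelling $G$'s summands this is $\sum_{i=1}^{d'}\up^i \le \sum_{i=1}^d\up^i$, true since $d' \le d-1 < d$ and we add $\up^{d'+1},\dots,\up^d > 0$. (If $J = 0$ or $J = \st$ the base case of the induction applies and $J$ has no relevant Left option, or only $J = \st \mapsto 0$, handled like case (1).)

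For part (ii), the Right options are: (1') $G^R = G - \st$ if $c = 1$; (2') $G^R = G - \down^d$ (from $\down^d \mapsto 0$ in (\ref{eq:9f1ueuce84aw}), as $b_d \ge 1$); (3') $G^R = J^R + b_d\cdot\down^d + c\cdot\st$. Case (2') gives exactly $G^R \ge G - \down^d$ (equality), the second alternative. Case (1') gives $G^R = G - \st = G + \st$ (as $-\st = \st$), the first alternative. Case (3') needs the inductive claim for $J$: either $J^R = J + \st$, giving $G^R = G + \st$; or $J^R \ge J - \down^{d'}$ where $d' = \deg(J) \le d-1$, giving $G^R \ge G - \down^{d'} = G + \up^{d'}$, and I would need $G + \up^{d'} \ge G - \down^d = G + \up^d$, i.e. $\up^{d'} \ge \up^d$, true by Proposition \ref{prop:uptimal-comp}(ii) since $d' < d$ (so $\up^{d'} \ge \up^{d'+1} \ge \cdots$, and in fact $\up^d \ll \up^{d'}$).

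The main obstacle I anticipate is setting up the induction cleanly: the summand $J = \sum_{i=1}^{d-1} b_i\cdot\down^i$ need not have $\deg(J) = d-1$ (some intermediate $b_i$ can be zero), so the induction should be on $d = \deg(G)$ with the statement quantified over all such $G$, and one must carefully handle the degenerate cases $G = 0$, $G = \st$, and more generally when moving inside $J$ lands on $0$ or $\st$. A secondary delicate point is the bookkeeping of signs and the identity $-\st = \st$, and making sure the options of $G$ are exactly those listed — this follows because $G$ is literally a disjunctive sum of the independent summands $\st$ (with multiplicity $c$) and $\down^i$ (with multiplicity $b_i$), so by the definition of $+$ every option of $G$ replaces one summand-copy by one of its (unique, in each case) options, and the options of $\st$ and of $\down^i$ are read off directly from Definitions \ref{def:star} and \ref{def:uptimal}.
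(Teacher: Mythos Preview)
Your proof is correct but takes an unnecessarily roundabout path. You separate $G$ as $c\cdot\st + b_d\cdot\down^d + J$ with $J=\sum_{i=1}^{d-1}b_i\down^i$ and then invoke induction on $d$ to handle moves inside $J$. The paper's argument is more direct: a Left or Right option of $G$ is obtained by playing on exactly one summand, and that summand is either a copy of $\st$ or a copy of $\down^{d'}$ for some $d'\le d$ with $b_{d'}\ge 1$. For part~(i), playing on $\down^{d'}$ gives (via (\ref{eq:9f1ueuce84aw})) $G^L = G - \sum_{i=1}^{d'}\down^i + \st$, and since $d'\le d$ one adds the non-negative quantity $\sum_{i=d'+1}^{d}\up^i$ to reach the bound; for part~(ii), playing on $\down^{d'}$ gives $G^R=G-\down^{d'}\ge G-\down^d$ directly from $\up^{d'}\ge\up^d$. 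No induction is needed. What your inductive step ultimately produces, after unwinding, is exactly this uniform treatment of each $\down^{d'}$, so the two arguments are equivalent in content; the paper's version is just shorter.

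Two minor remarks on your writeup: the parenthetical ``only if $c=1$'' in cases (1) and (1') should read ``only if $c\ge1$'' (any copy of $\st$ can be played on, and the resulting option is the same up to game value); and in case~(3') your $J$ has no $\st$ summand, so the alternative $J^R=J+\st$ from the inductive hypothesis is actually vacuous there --- this does no harm, but it is worth noting.
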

See Appendix \ref{subsec:proof-uptimal-option} for the proof of Lemma \ref{lem:uptimal-option}.

%%%%%%%%%%%%%%%%%%%%%%%%%%%
\subsection{Sequential Compounds}
%%%%%%%%%%%%%%%%%%%%%%%%%%%

Now, we introduce the main topic of this paper, sequential compounds of games.
In the sequential compound $G \seq H$ of $G, H \in \short$, the players first play on $G$ until $G$ is over, then play on $H$.
More precisely, a player must make a move on $G$ as long as the player has an option in $G$, and if the player does not, then the player makes a move on $H$.
When either of the two players plays on $H$, the game $G$ is immediately discarded even if the other player still has options in $G$.
The formal definition is as follows.

\begin{definition}[{\cite[Section 6]{SU93}}]
\label{def:seq}
For $G, H \in \tilde{\mathbb{G}}$,
the \emph{sequential compound} $(G \seq H)$ of $G$ and $H$ is defined recursively as
\begin{eqnarray}
\label{eq:r3xyoug24857}
(G \seq H)^{\mathcal{L}}
\defeq{=} \begin{cases}
G^{\mathcal{L}} \seq H &\,\,\text{if}\,\, G^{\mathcal{L}} \neq \emptyset,\\
H^{\mathcal{L}} &\,\,\text{if}\,\, G^{\mathcal{L}} = \emptyset,\\
\end{cases} \quad
(G \seq H)^{\mathcal{R}}
\defeq{=} \begin{cases}
G^{\mathcal{R}} \seq H &\,\,\text{if}\,\, G^{\mathcal{R}} \neq \emptyset,\\
H^{\mathcal{R}} &\,\,\text{if}\,\, G^{\mathcal{R}} = \emptyset,\\
\end{cases}
\end{eqnarray}
where $\mathcal{G} \seq H \defeq{=} \{G \seq H : G \in \mathcal{G}\}$ for $\mathcal{G} \subseteq \short$.
\end{definition}

\begin{remark}
\label{rem:seq}
Definition \ref{def:seq} is from \cite[Section 6]{SU93}.
The definition of sequential compounds in \cite[Definition 3]{Ste07} is slightly different as follows:
\begin{equation}
\label{eq:9znzjudvuld5}
G \seq H \defeq{\cong} \begin{cases}
\left\{ G^{\mathcal{L}} \seq H \relmiddle| G^{\mathcal{R}} \seq H \right\} &\,\,\text{if}\,\, G \not\cong 0,\\
H &\,\,\text{if}\,\, G \cong 0.
\end{cases}
\end{equation}

In this paper, we adopt the definition by \cite[Section 6]{SU93} because it enjoys the good properties that
the outcome class of a game $G$ in mis\`{e}re play can be represented as $o(G \seq \st)$ by the outcome class in normal play.
In this sense, our discussion includes the case of mis\`{e}re play.
Also, the outcome class of a sequential compound by \cite[Section 6]{SU93} is determined by the simple formula in the following Lemma \ref{lem:seq-outcome}.
\end{remark}

\begin{lemma}[{\cite[Theorem 6.1]{SU93}}]
\label{lem:seq-outcome}
For any $G, H \in \short$, we have
\begin{equation*}
o(G \seq H)
= \begin{cases}
\mathscr{L} &\,\,\text{if}\,\, o(H) = \mathscr{L},\\
\mathscr{R} &\,\,\text{if}\,\, o(H) = \mathscr{R},\\
o(G) &\,\,\text{if}\,\, o(H) = \mathscr{P},\\
o(G \seq \st) &\,\,\text{if}\,\, o(H) = \mathscr{N}.\\
\end{cases}
\end{equation*}
\end{lemma}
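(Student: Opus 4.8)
The plan is to prove the identity by structural induction on $G$ (equivalently, induction on $\birth(G)$), using the recursive characterisation of $\mathscr{P}^L,\mathscr{N}^L,\mathscr{P}^R,\mathscr{N}^R$ in Definition~\ref{def:outcome} together with the recursive description of $(G \seq H)^{\mathcal{L}}$ and $(G \seq H)^{\mathcal{R}}$ in Definition~\ref{def:seq}. Since $o(X)$ is determined by the pair of truth values ``$X \in \mathscr{N}^L$?'' and ``$X \in \mathscr{N}^R$?'' (using $\mathscr{N}^L = \short \setminus \mathscr{P}^R$, $\mathscr{N}^R = \short \setminus \mathscr{P}^L$, $\mathscr{L} = \mathscr{P}^L \cap \mathscr{N}^L$, and so on), and since Definition~\ref{def:outcome} expresses each of these for $X$ in terms of the same data for the options of $X$, it suffices to propagate these two bits through the induction. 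The proof then splits into four cases according to $o(H)$, and in each the inductive step splits again according to whether $G^{\mathcal{L}}$ (resp.\ $G^{\mathcal{R}}$) is empty: if it is nonempty, the relevant options of $G \seq H$ are games $G' \seq H$ to which the induction hypothesis applies; if it is empty, those options are the options of $H$ itself, and the needed membership facts are read off from $o(H)$ via Definition~\ref{def:outcome} applied to $H$.

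The cases $o(H) = \mathscr{L}$ and $o(H) = \mathscr{R}$ are symmetric and the easiest: I claim $G \seq H \in \mathscr{L}$ whenever $o(H) = \mathscr{L}$, and prove $G \seq H \in \mathscr{P}^L \cap \mathscr{N}^L = \mathscr{L}$. If $G$ still has a Left option $G^L$, then $G^L \seq H \in \mathscr{L} \subseteq \mathscr{P}^L$ by the induction hypothesis, which witnesses $G \seq H \in \mathscr{N}^L$; if $G$ has no Left option, the Left options of $G \seq H$ are the $H^L$, and $H \in \mathscr{L} \subseteq \mathscr{N}^L$ supplies one in $\mathscr{P}^L$ by Definition~\ref{def:outcome}(b). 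The $\mathscr{P}^L$-part is dual, using the induction hypothesis (resp.\ $H \in \mathscr{L} \subseteq \mathscr{P}^L$) on the Right options. For $o(H) = \mathscr{P}$ I will instead show that $G \seq H$ carries the same two bits as $G$; the only point worth isolating is the subcase $G^{\mathcal{L}} = \emptyset$ (resp.\ $G^{\mathcal{R}} = \emptyset$), where $G$ has the bit ``not in $\mathscr{N}^L$'' (resp.\ ``not in $\mathscr{N}^R$'') vacuously, and $G \seq H$ inherits it because $H \in \mathscr{P} \subseteq \mathscr{P}^R$ (resp.\ $\mathscr{P}^L$) means exactly that no Left (resp.\ Right) move into $H$ reaches a $\mathscr{P}^L$- (resp.\ $\mathscr{P}^R$-) position.

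The case $o(H) = \mathscr{N}$ is the crux. Here I will prove, again by induction on $G$, that $G \seq H$ and $G \seq \st$ carry the same two bits (which gives $o(G \seq H) = o(G \seq \st)$). When $G$ still has options on the relevant side, the options of $G \seq H$ and of $G \seq \st$ are $G' \seq H$ and $G' \seq \st$ and are matched by the induction hypothesis. The delicate subcase is $G^{\mathcal{L}} = \emptyset$ (resp.\ $G^{\mathcal{R}} = \emptyset$): the Left (resp.\ Right) options of $G \seq H$ are then the $H^L$ (resp.\ $H^R$), whereas those of $G \seq \st$ are $\{0\}$. The key is that $o(H) = \mathscr{N} = \mathscr{N}^L \cap \mathscr{N}^R$ forces (via Definition~\ref{def:outcome}(b),(d)) that $H$ has some Left option in $\mathscr{P}^L$ and some Right option in $\mathscr{P}^R$ — and that $H^{\mathcal{L}}, H^{\mathcal{R}} \neq \emptyset$ — which exactly parallels $0 \in \mathscr{P}^L \cap \mathscr{P}^R$; consequently the ``boundary'' move into $H$ and the ``boundary'' move into $\st$ induce the same bit. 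The main technical obstacle will be exactly this matching: keeping straight, at each step, which clause of Definition~\ref{def:outcome} is being invoked and for which of $G$, $H$, $\st$, and confirming that every emptiness pattern of $G^{\mathcal{L}}, G^{\mathcal{R}}$ behaves consistently. Once the bookkeeping of the $\mathscr{N}$-case is in place, the remaining three cases are routine, and assembling them yields the stated formula for $o(G \seq H)$.
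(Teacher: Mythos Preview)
The paper does not give its own proof of this lemma; it is quoted as \cite[Theorem~6.1]{SU93} without argument. Your induction on $\birth(G)$, tracking the two bits ``$\in\mathscr{N}^L$?'' and ``$\in\mathscr{N}^R$?'' via Definition~\ref{def:outcome} and the option description in Definition~\ref{def:seq}, is correct and is the natural proof. One small point to make explicit when you write it up: in the $o(H)=\mathscr{N}$ case with $G^{\mathcal{R}}=\emptyset$, the $\mathscr{P}^L$-bit (not just the $\mathscr{N}^L$-bit) also matches, because $H\in\mathscr{N}^R$ provides some $H^R\in\mathscr{P}^R=\short\setminus\mathscr{N}^L$, so $G\seq H\notin\mathscr{P}^L$, which parallels $0\notin\mathscr{N}^L$ giving $G\seq\st\notin\mathscr{P}^L$. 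You gestured at this with ``some Right option in $\mathscr{P}^R$,'' but it is worth stating the consequence for the $\mathscr{P}^L$-bit outright, since your detailed boundary analysis only spelled out the $\mathscr{N}^L$-side.
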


The sequential compound satisfies the following basic properties, such as the associative law and the existence of an identity.

\begin{lemma}
\label{lem:seq}
The following statements (i)--(iii) hold.
\begin{enumerate}[(i)]
\item For any $G \in \tilde{\mathbb{G}}$, we have $G \seq 0 \cong 0 \seq G \cong G$.
\item For any $G, H, J \in \tilde{\mathbb{G}}$, we have $(G\seq H) \seq J \cong G \seq (H \seq J)$.
\item For any $G, H \in \tilde{\mathbb{G}}$, we have $-(G \seq H) \cong (-G) \seq (-H)$.
\end{enumerate}
\end{lemma}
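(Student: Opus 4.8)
The plan is to prove each of the three statements by structural induction on the games involved, using the recursive Definition~\ref{def:seq} directly. All three are statements about equality as elements of $\short$ (i.e., $\cong$), so no order-theoretic machinery is needed --- we only need to check that the Left option sets and Right option sets agree.

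For (i), the claim $G \seq 0 \cong G$ follows by induction on $G$: since $0^{\mathcal{L}} = 0^{\mathcal{R}} = \emptyset$, the second branch of each case in~(\ref{eq:r3xyoug24857}) never fires, so $(G \seq 0)^{\mathcal{L}} = G^{\mathcal{L}} \seq 0$ when $G^{\mathcal{L}} \neq \emptyset$ and $= \emptyset = G^{\mathcal{L}}$ when $G^{\mathcal{L}} = \emptyset$; by the induction hypothesis applied to each option, $G^{\mathcal{L}} \seq 0 = G^{\mathcal{L}}$ (as sets of games), and symmetrically for Right options. The claim $0 \seq G \cong G$ is immediate with no induction: $0^{\mathcal{L}} = \emptyset$, so $(0 \seq G)^{\mathcal{L}} = G^{\mathcal{L}}$ directly, and likewise for Right.

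For (ii), I would induct on $G$. If $G \cong 0$, both sides equal $H \seq J$ by part~(i) (the left side is $(0 \seq H) \seq J \cong H \seq J$, the right side is $0 \seq (H \seq J) \cong H \seq J$). If $G \not\cong 0$, I split into subcases according to whether $G^{\mathcal{L}}$ and $G^{\mathcal{R}}$ are empty. When $G^{\mathcal{L}} \neq \emptyset$: the left side has $((G \seq H) \seq J)^{\mathcal{L}} = (G \seq H)^{\mathcal{L}} \seq J = (G^{\mathcal{L}} \seq H) \seq J$, since $(G\seq H)^{\mathcal{L}} = G^{\mathcal{L}}\seq H$ is nonempty; the right side has $(G \seq (H \seq J))^{\mathcal{L}} = G^{\mathcal{L}} \seq (H \seq J)$; these agree by the induction hypothesis applied to each $G^L \in G^{\mathcal{L}}$. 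When $G^{\mathcal{L}} = \emptyset$ but $G \not\cong 0$ (so $G^{\mathcal{R}} \neq \emptyset$): the left side has $((G\seq H)\seq J)^{\mathcal{L}} = (G \seq H)^{\mathcal{L}} \seq J$ if $(G\seq H)^{\mathcal{L}} \neq \emptyset$, and here $(G \seq H)^{\mathcal{L}} = H^{\mathcal{L}}$, so this becomes $H^{\mathcal{L}} \seq J$ when $H^{\mathcal{L}} \neq \emptyset$ and $(H\seq J)^{\mathcal{L}}$ when $H^{\mathcal{L}} = \emptyset$; the right side has $(G \seq (H\seq J))^{\mathcal{L}} = (H \seq J)^{\mathcal{L}}$, which unfolds to exactly the same thing by one more application of~(\ref{eq:r3xyoug24857}) to $H \seq J$. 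Doing the symmetric Right-option bookkeeping completes this case. The main obstacle here --- though it is more tedium than genuine difficulty --- is that the ``empty option set'' branches require carefully tracking two levels of unfolding of the definition, and one must be careful that emptiness of $(G \seq H)^{\mathcal{L}}$ is governed by emptiness of $H^{\mathcal{L}}$ (not $G^{\mathcal{L}}$) once $G^{\mathcal{L}} = \emptyset$.

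For (iii), I would again induct on $G$. If $G \cong 0$: $-(0 \seq H) \cong -H$ by part~(i), and $(-0) \seq (-H) \cong 0 \seq (-H) \cong -H$ by part~(i) again, noting $-0 \cong 0$. If $G \not\cong 0$: using Definition~\ref{def:negation}, $(-(G\seq H))^{\mathcal{L}} = -((G\seq H)^{\mathcal{R}})$, which equals $-(G^{\mathcal{R}} \seq H)$ if $G^{\mathcal{R}} \neq \emptyset$ and $-(H^{\mathcal{R}})$ if $G^{\mathcal{R}} = \emptyset$. On the other side, $(-G)$ has $(-G)^{\mathcal{L}} = -(G^{\mathcal{R}})$, so $((-G)\seq(-H))^{\mathcal{L}} = (-G)^{\mathcal{L}} \seq (-H) = (-(G^{\mathcal{R}})) \seq (-H)$ when $G^{\mathcal{R}} \neq \emptyset$, and $= (-H)^{\mathcal{L}} = -(H^{\mathcal{R}})$ when $G^{\mathcal{R}} = \emptyset$. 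In the first case the two expressions match by the induction hypothesis applied to each $G^R \in G^{\mathcal{R}}$ (together with $-(G^{\mathcal{R}} \seq H) = \{-(G^R \seq H) : G^R \in G^{\mathcal{R}}\}$ and $(-(G^{\mathcal{R}})) \seq (-H) = \{(-G^R)\seq(-H) : G^R \in G^{\mathcal{R}}\}$); in the second they are literally equal. The Right options are handled symmetrically, and this completes the induction.

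Throughout, the recursion is well-founded because in each inductive step the games $G$ (or its options) strictly decrease in formal birthday, and there is no genuine conceptual difficulty --- the proof is a disciplined unfolding of Definitions~\ref{def:seq} and~\ref{def:negation}. I would relegate the full case analysis to the appendix, as the excerpt indicates the author does with other routine verifications.
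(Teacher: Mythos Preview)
Your proposal is correct and follows essentially the same approach as the paper: structural induction on $\birth(G)$ for all three parts, with part~(i) used in the base cases of (ii) and (iii), and the inductive steps handled by unfolding Definition~\ref{def:seq} (and Definition~\ref{def:negation} for (iii)) according to whether the relevant option sets are empty. The only cosmetic wrinkle is your remark that ``the second branch \ldots\ never fires'' in the proof of $G \seq 0 \cong G$ --- it does fire when $G^{\mathcal{L}} = \emptyset$, but as you immediately note it yields $0^{\mathcal{L}} = \emptyset = G^{\mathcal{L}}$ anyway, so the computation is unaffected.
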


See Appendix \ref{subsec:proof-seq} for the proof of Lemma \ref{lem:seq}.
\footnote{Lemma \ref{lem:seq} (i) and (ii) are stated by \cite{SU93}.
However, because no proof is found in \cite{SU93}, we give an explicit proof here.}

By the associativity, we can write the sequential compound of multiple games $G_1, G_2, \ldots, G_n$ unambiguously as
\begin{equation}
\label{eq:gwnvh7zozx9g}
G_1 \to G_2 \to \cdots \to G_n.
\end{equation}
In (\ref{eq:gwnvh7zozx9g}), a player makes a move on the leftmost component $G_i$ such that the player has an option,
and then the components $G_1, G_2, \ldots, G_{i-1}$ are discarded resulting in $G'_i \to G_{i+1} \to \cdots \to G_n$, where $G'_i$ is the resulting game by the move on $G_i$.

\begin{comment}
\begin{lemma}
For any $G, H \in \tilde{\mathbb{G}}$, the following statements (i) and (ii) hold.
\begin{enumerate}[(i)]
\item If $H > 0$, then $(G \seq H) > 0$.
\item If $G \geq 0, H = 0$, then $(G \seq H) \geq 0$.
\end{enumerate}
\end{lemma}
\begin{proof}
(Proof of (i))
We prove by induction on $\birth(G)$.

First, we show that Left win $G \seq H$ playing first.
If $G^{\mathcal{L}} \neq \emptyset$, then
\begin{equation}
G^L \eqlab{A}{\seq} H > 0,
\end{equation}
where (A) follows from the induction hypothesis.
If $G^{\mathcal{L}} = \emptyset$, then there exists $H^L$ such that $H^L \geq 0$ by $H > 0$, and
Left can win moving from $G^L \seq H$ to $H^L$.

Next, we show that Left win $G \seq H$ playing second.
Any right option $G^R \seq H$ satisfies $G^R \seq H > 0$ by induction hypothesis.
Also, any right option $H^R$ satisfies $H^R \not\leq 0$ by $H > 0$.

(Proof of (ii))
We prove by induction on $\birth(G)$.
We show that Left win $G \seq H$ playing second.
By $G \geq 0$, any $G^R$ can be reverted to $G^{RL} \geq 0$.
Thus, any right option $G^R \seq H$ can be reverted to
\begin{equation}
G^{RL} \seq H \eqlab{A}{\geq} 0,
\end{equation}
where (A) follows from the induction hypothesis.
Also, any right option $H^R$ can be reverted to $H^{RL} \geq 0$ by $H \geq 0$.
\end{proof}
\end{comment}

%%%%%%%%%%%%%%%%%%%%%%%%%%%%%%%%%
\section{Main Results}
\label{sec:main}
%%%%%%%%%%%%%%%%%%%%%%%%%%%%%%%%%

In this section, we give the game value of any given sequential compound of integers and stars, that is,
\begin{equation}
\label{eq:t9wj7zfk2nku}
G \defeq{\cong} G_1 \seq G_2 \seq \cdots \seq G_n, \,\,\text{where}\,\, G_1, G_2, \ldots, G_n \in \mathbb{Z} \cup \{\st\}
\end{equation}
as our main result.
More precisely, we prove that $G$ can be written as $G = c\st + \sum_{i = 1}^k b_i \up^k$ for some integers $c \geq 0, k \geq 0$ and a sequence $(b_1, b_2, \ldots, b_k)$ of integers, and we give the specific values of $c, k$, and $(b_1, b_2, \ldots, b_k)$.

To do this, we first show some general properties of sequential compounds, parts of the main results in itself, in Subsection \ref{sec:main-general}.
In Subsection \ref{sec:main-int}, we consider the game value of (\ref{eq:t9wj7zfk2nku}) in the special case where $G_1, G_2, \ldots, G_n$ are integers.
Then we show the game value for the general case containing stars in Subsection \ref{sec:main-int-star}.

%%%%%%%%%%%%%%%%%%%%%%%%%%%%%%%%%
\subsection{General Properties of Sequential Compounds}
\label{sec:main-general}
%%%%%%%%%%%%%%%%%%%%%%%%%%%%%%%%%

In this subsection, we state three theorems that give general properties of sequential compounds.

The first theorem shows that if $H$ is a non-zero dicotic game, then $G \seq H$ is also a non-zero dicotic game.

\begin{theorem}
\label{lem:dicot}
For any $G \in \tilde{\mathbb{G}}$ and $H \in \dicot \setminus \{0\}$, we have $G \seq H \in \dicot \setminus \{0\}$.
\end{theorem}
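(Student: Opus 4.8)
The plan is to prove the statement by induction on $\birth(G)$, establishing simultaneously that $G \seq H$ is dicotic and that $G \seq H \not\cong 0$ (the latter being immediate since $G \seq H$ has options whenever $H$ does, and $H \in \dicot \setminus \{0\}$ has at least one Left option and one Right option). First I would handle the base case $G \cong 0$: by Lemma~\ref{lem:seq}~(i) we have $0 \seq H \cong H$, which is in $\dicot \setminus \{0\}$ by hypothesis. For the inductive step, suppose $\birth(G) \geq 1$ and the claim holds for all games of smaller formal birthday.

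The key observation is that $G \seq H$ has options precisely according to the two cases in Definition~\ref{def:seq}. I would argue that $G \seq H$ always has both a Left and a Right option: if $G^{\mathcal{L}} \neq \emptyset$ then $(G\seq H)^{\mathcal{L}} = G^{\mathcal{L}} \seq H \neq \emptyset$, and if $G^{\mathcal{L}} = \emptyset$ then $(G \seq H)^{\mathcal{L}} = H^{\mathcal{L}} \neq \emptyset$ because $H$ is dicotic and non-zero (so $H^{\mathcal{L}} \neq \emptyset$); the symmetric argument handles Right options. Hence $G \seq H \not\cong 0$ and the non-emptiness condition in the definition of $\dicot$~(b) is satisfied. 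It then remains to check that every option of $G \seq H$ is itself dicotic. There are two kinds of options to consider. An option of the form $G' \seq H$ with $G'$ an option of $G$: since $\birth(G') < \birth(G)$, the induction hypothesis gives $G' \seq H \in \dicot \setminus \{0\} \subseteq \dicot$. An option of the form $H'$ with $H'$ an option of $H$ (arising when the relevant option set of $G$ is empty): since $H \in \dicot$, we have $H' \in \dicot$ directly from the definition of dicotic games. In both cases the option lies in $\dicot$, so by condition~(b) in the definition of $\dicot$ we conclude $G \seq H \in \dicot$, completing the induction.

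The main subtlety to be careful about is the interplay between the two branches of Definition~\ref{def:seq}: when $G^{\mathcal{L}} = \emptyset$ but $G^{\mathcal{R}} \neq \emptyset$ (or vice versa), the Left and Right options of $G \seq H$ come from different sources ($H^{\mathcal{L}}$ on one side, $G^{\mathcal{R}} \seq H$ on the other). One must verify dicoticity of the options in each branch separately, but this is exactly what the case analysis above does, and no single branch causes trouble because $H$ being dicotic and non-zero guarantees $H^{\mathcal{L}} \neq \emptyset$ and $H^{\mathcal{R}} \neq \emptyset$ simultaneously. I do not expect any genuine obstacle here; the argument is a routine structural induction, with the only point requiring attention being the bookkeeping of which option set is empty. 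It is worth noting that the hypothesis $H \neq 0$ is essential: without it, $G \seq 0 \cong G$ need not be dicotic, so the non-zero assumption on $H$ is used precisely to force $H$ (and hence, in the base of the nested case analysis, $G \seq H$) to have options of both colors.
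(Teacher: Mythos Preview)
Your proof is correct and follows essentially the same approach as the paper: both argue by induction on $\birth(G)$, handle the base case $G\cong 0$ via $0\seq H\cong H$, and in the inductive step split on whether $G^{\mathcal{L}}$ (resp.\ $G^{\mathcal{R}}$) is empty to show that $(G\seq H)^{\mathcal{L}}$ and $(G\seq H)^{\mathcal{R}}$ are nonempty subsets of $\dicot$. Your explicit remark about the mixed case $G^{\mathcal{L}}=\emptyset$, $G^{\mathcal{R}}\neq\emptyset$ is a nice clarification that the paper leaves implicit.
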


\begin{proof}[Proof of Theorem \ref{lem:dicot}]
We prove by induction on $\birth(G)$.
For the base case $G \cong 0$, we have
\begin{equation*}
G \seq H \cong 0 \seq H \cong H \in \dicot \setminus \{0\}.
\end{equation*}

We next consider the induction hypothesis for $G \not \cong 0$.
Here, we show $\emptyset \neq (G \seq H)^{\mathcal{L}} \subseteq \dicot$, and then it follows that $\emptyset \neq (G \seq H)^{\mathcal{R}} \subseteq \dicot$ by the symmetric argument.
Since 
\begin{eqnarray*}
(G \seq H)^{\mathcal{L}}
= \begin{cases}
G^{\mathcal{L}} \seq H &\,\,\text{if}\,\, G^{\mathcal{L}} \neq \emptyset,\\
H^{\mathcal{L}} &\,\,\text{if}\,\, G^{\mathcal{L}} = \emptyset
\end{cases}
\end{eqnarray*}
by (\ref{eq:r3xyoug24857}), it suffices to show that $\emptyset \neq G^{\mathcal{L}} \seq H \subseteq \dicot$ in the case $G^{\mathcal{L}} \neq \emptyset$, and
 $\emptyset \neq H^{\mathcal{L}} \subseteq \dicot$ in the case $G^{\mathcal{L}} = \emptyset$ as follows.
\begin{itemize}
\item The case $G^{\mathcal{L}} \neq \emptyset$:
We have
\begin{equation}
\emptyset \eqlab{A}\neq G^{\mathcal{L}} \seq H = \left\{  G^L \seq H : G^L \in G^{\mathcal{L}} \right\} \eqlab{B}\subseteq \dicot,
\end{equation}
where
(A) follows from $G^{\mathcal{L}} \neq \emptyset$,
and (B) follows by applying the induction hypothesis to every $G^L \seq H$.

\item The case $G^{\mathcal{L}} = \emptyset$: We have $\emptyset \neq H^{\mathcal{L}} \subseteq \dicot$ directly from the assumption $H \in \dicot \setminus \{0\}$.
\end{itemize}
\end{proof}

Next, we present the second theorem.
In sequential compounds, it is not guaranteed in general that replacing a component with another with an equal game value does not change the game value in contrast to the case of disjunctive compounds (cf. Proposition \ref{prop:order} (iii) and Corollary \ref{cor:replace}).
Namely, even if $G = G'$ (resp.~$H = H'$), it does not necessarily hold that $G \seq H = G' \seq H$ (resp.~$G \seq H = G \seq H'$).
However, in the case $G$ is dicotic, the condition $H = H'$ guarantees $G \seq H = G \seq H'$, that is, we can replace $H$ with $H'$ without changing the game value.
More generally, the following Theorem \ref{lem:order-preserve} holds.

\begin{theorem}
\label{lem:order-preserve}
For any $G \in \dicot$ and $H, H' \in \tilde{\mathbb{G}}$, we have
\begin{equation}
\label{eq:95f7071iydnv}
H \geq H' \implies (G \seq H) \geq (G \seq H').
\end{equation}
\end{theorem}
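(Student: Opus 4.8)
The plan is to prove the implication by induction on $\birth(G)$, exploiting the characterization of $\geq$ via outcome classes: namely, to show $(G \seq H) \geq (G \seq H')$ it suffices to show that for every $X \in \short$, if Left wins $(G\seq H') + X$ going first (resp.\ second), then Left wins $(G \seq H) + X$ going first (resp.\ second). Equivalently, it is cleaner to work directly with the difference and show $(G \seq H) - (G \seq H') \geq 0$; using Lemma~\ref{lem:seq}~(iii) this equals $(G \seq H) + ((-G) \seq (-H'))$, but since $G$ dicotic does not force $-G \cong G$, I expect the cleanest route is instead the order-theoretic one: prove that Left, playing second, wins $(G \seq H) + Z$ whenever she wins $(G \seq H') + Z$, for all $Z$, and symmetrically for playing first. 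I would set up a strategy-stealing argument: Left plays in $(G\seq H)+Z$ by mimicking her winning strategy in $(G\seq H')+Z$, which is possible as long as the $G$-part has not been exhausted, because while $G$ is live the two sequential compounds have literally the same option structure on the $G$-side (the Left/Right options are $G^L \seq H$ versus $G^L \seq H'$, to which we apply the induction hypothesis).

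The key steps, in order: (1) Base case $G \cong 0$: then $G \seq H \cong H$ and $G \seq H' \cong H'$, so the claim is just the hypothesis $H \geq H'$. (2) Inductive step: for $G \not\cong 0$, since $G$ is dicotic, both $G^{\mathcal L} \neq \emptyset$ and $G^{\mathcal R} \neq \emptyset$, hence $(G\seq H)^{\mathcal L} = G^{\mathcal L} \seq H$ and $(G\seq H)^{\mathcal R} = G^{\mathcal R}\seq H$, and likewise for $H'$. Moreover each $G^L$ and each $G^R$ is again dicotic, so the induction hypothesis applies: $G^L \seq H \geq G^L \seq H'$ and $G^R \seq H \geq G^R \seq H'$ for all options. (3) Now invoke the Replacement Lemma repeatedly: starting from $G \seq H'$, replace each Left option $G^L \seq H'$ by the $\geq$ game $G^L \seq H$ and each Right option $G^R \seq H'$ by the $\geq$ game $G^R \seq H$; by the Replacement Lemma each such replacement only increases (weakly) the game value, and after all replacements we obtain precisely $\{\,G^{\mathcal L}\seq H \mid G^{\mathcal R}\seq H\,\} \cong G \seq H$. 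Hence $G \seq H \geq G \seq H'$, completing the induction.

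The main obstacle — and the reason the statement needs $G$ dicotic — is step (2): we must be sure that $(G \seq H)^{\mathcal L}$ and $(G\seq H')^{\mathcal L}$ are computed by the \emph{same} branch of the defining case split in~(\ref{eq:r3xyoug24857}), i.e.\ that whether $G^{\mathcal L}$ is empty does not depend on $H$ versus $H'$ (it does not, it only depends on $G$) and, crucially in the recursion, that the options $G^L$ of a dicotic $G$ are themselves dicotic so the induction hypothesis is applicable to them. Dicoticness guarantees $G^{\mathcal L},G^{\mathcal R}\neq\emptyset$ at every stage, which is exactly what makes the option sets of $G\seq H$ and $G\seq H'$ structurally identical up to replacing $H$ by $H'$ inside; without it the empty-option case for Left in $G$ could force $(G\seq H)^{\mathcal L} = H^{\mathcal L}$ while the corresponding branch for $G'$ behaves differently, and the clean Replacement-Lemma argument collapses. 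I would also remark that this gives, as an immediate corollary, that $H = H'$ implies $G \seq H = G \seq H'$ for dicotic $G$, the assertion advertised in the paragraph preceding the theorem.
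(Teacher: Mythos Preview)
Your proposal is correct. Both your argument and the paper's proceed by induction on $\birth(G)$, use the dicoticness of $G$ to ensure $G^{\mathcal L},G^{\mathcal R}\neq\emptyset$ in the inductive step (so that the option sets of $G\seq H$ and $G\seq H'$ are built by the same branch of~(\ref{eq:r3xyoug24857})), and apply the induction hypothesis to each $G^L$ and $G^R$. The difference is only in how the inductive step is closed off: the paper works directly with the difference game $J \cong (G\seq H)-(G\seq H')$ and shows $J\geq 0$ by exhibiting, for every Right option $J^R$ (which must move in one of the two $G$-components), a Left reply in the \emph{same} $G$-component that restores a position of the form $(G'\seq H)-(G'\seq H')\geq 0$ by induction. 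You instead invoke the Replacement Lemma as a black box: having $G^L\seq H\geq G^L\seq H'$ and $G^R\seq H\geq G^R\seq H'$ for all options, you replace option-by-option to pass from $G\seq H'$ to $G\seq H$. Your route is slightly more modular (it reuses an existing tool rather than re-deriving the mirroring strategy), while the paper's is more self-contained; unwinding the proof of the Replacement Lemma in your argument recovers exactly the paper's Left-reply strategy, so the two are equivalent at the level of ideas. Your initial detour through outcome classes and strategy-stealing is unnecessary once you commit to the Replacement Lemma route, and can be dropped.
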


\begin{proof}[Proof of Theorem \ref{lem:order-preserve}]
We assume $H - H' \geq 0$ and show
$J \defeq{\cong} (G \seq H) - (G \seq H') \geq 0$
by induction on $\birth(G)$.

For the base case $G \cong 0$, we have $(G \seq H) - (G \seq H') \cong (0 \seq H) - (0 \seq H') \cong H - H' \geq 0$ as desired.

We consider the induction step for $G \not\cong 0$ (i.e., $G \in \dicot \setminus \{0\}$).
To prove $J \geq 0$, it suffices to show that for any $J^R \in J^{\mathcal{R}}$, there exists $J^{RL} \in (J^R)^{\mathcal{L}}$ such that $J^{RL} \geq 0$.
Because $G^{\mathcal{L}} \neq \emptyset$ and $G^{\mathcal{R}} \neq \emptyset$ by $G \in \dicot \setminus \{0\}$,
any $J^R \in J^{\mathcal{R}}$ is in one of the following two forms (i) and (ii):
\begin{enumerate}[(i)]
\item $J^R \cong (G^R \seq H) - (G \seq H')$ for some $G^R \in G^{\mathcal{R}}$,
\item $J^R \cong (G \seq H) - (G^L \seq H')$ for some $G^L \in G^{\mathcal{L}}$.
\end{enumerate}

In the case (i), the Right option $J^R$ has the Left option
\begin{equation*}
(G^R \seq H) - (G^R \seq H') \eqlab{A}{\geq} 0,
\end{equation*}
where (A) follows from the induction hypothesis.

In the case (ii), the Right option $J^R$ has the Left option
\begin{equation*}
(G^L \seq H) - (G^L \seq H') \eqlab{A}{\geq} 0,
\end{equation*}
where (A) follows from the induction hypothesis.
\end{proof}

Since the symmetric argument holds when replacing $\geq$ in (\ref{eq:95f7071iydnv}) with $\leq$,
we obtain the following corollary.

\begin{corollary}
\label{cor:order-preserve}
For any $G \in \dicot$ and $H, H' \in \tilde{\mathbb{G}}$, we have
\begin{equation*}
H = H' \implies (G \seq H) = (G \seq H').
\end{equation*}
\end{corollary}

\begin{remark}
The converse of (\ref{eq:95f7071iydnv}) does not necessarily hold because of the following counterexample:
$G \defeq{\cong} \st, H \defeq{\cong} \st, H' \defeq{\cong} \{0, \st \mid 0, \st \}$.
\end{remark}

Next, we describe the third theorem.
In the aspect of the Number Avoidance Theorem mentioned above Proposition \ref{prop:number-transition},
it is expected that in the disjunctive sum $G + X$ of a non-number $G$ and a number $X$,
both players continue to make moves on $G$ at first until $G$ is over, and then play on $X$.
Namely, the play of $G + X$ proceeds like as $G \seq X$, and thus $G+X = G \seq X$ holds.
This expectation holds true for a dicotic $G$ in a more general form as the following theorem.

\begin{theorem}
\label{thm:num-trans}
For any $G \in \dicot$ and $H, X \in \tilde{\mathbb{G}}$, if $G \seq H \in \dicot$ and $X = x$ for some $x \in \mathbb{D}$,
then
$G \seq (X + H) = X + (G \seq H)$.
\end{theorem}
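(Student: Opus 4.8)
The plan is to establish the equivalent inequality $K \geq \llbracket 0 \rrbracket$, where $K$ denotes the game $(G \seq (X+H)) - X - (G \seq H)$, for \emph{every} triple $(G,H,X)$ with $G \in \dicot$, $H \in \short$, $X$ a number, and $G \seq H \in \dicot$. Granting this, the reverse inequality $K \leq \llbracket 0 \rrbracket$, hence $K = \llbracket 0 \rrbracket$ --- which, $\Short$ being a group, is exactly $G \seq (X+H) = X + (G \seq H)$ --- follows by applying the inequality just proved to the negated triple $(-G,-H,-X)$: it again satisfies all hypotheses (both $\dicot$ and the numbers are closed under negation, and $(-G)\seq(-H) \cong -(G \seq H) \in \dicot$ by Lemma \ref{lem:seq} (iii)), and one checks that its $K$ equals $-K$, using Lemma \ref{lem:seq} (iii), Proposition \ref{prop:negation}, and Corollary \ref{cor:order-preserve} applied to the dicotic game $-G$ to rewrite $(-G)\seq(-(X+H))$ as $(-G)\seq((-X)+(-H))$. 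Moreover, since $G$ is dicotic, replacing $X$ by the game $\llbracket x \rrbracket$ changes neither side (Corollary \ref{cor:order-preserve} and Proposition \ref{prop:order} (iii)), so we assume $X \cong \llbracket x \rrbracket$; then every Left option of $X$ has the form $\llbracket x' \rrbracket$ with $x' < x$ and is therefore $< X$ by Proposition \ref{prop:number}.

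I would prove $K \geq \llbracket 0 \rrbracket$ by induction on $\birth(G)$. If $G \cong \llbracket 0 \rrbracket$, then $G \seq (X+H) \cong X+H$ and $G \seq H \cong H$ by Lemma \ref{lem:seq} (i), so $K = (X+H)-X-H = \llbracket 0 \rrbracket$. Assume $G \in \dicot \setminus \{\llbracket 0 \rrbracket\}$, so $G^{\mathcal L} \neq \emptyset \neq G^{\mathcal R}$ with all options dicotic, and write $D$ for $G \seq H$; then $D^{\mathcal L} = G^{\mathcal L} \seq H \neq \emptyset$, $D^{\mathcal R} = G^{\mathcal R} \seq H \neq \emptyset$, and since $D \in \dicot$ every option of $D$ --- in particular every $G^L \seq H$ and every $G^R \seq H$ --- is dicotic. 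To obtain $K \geq \llbracket 0 \rrbracket$ it suffices to find, for each Right option $K^R$ of $K \cong (G \seq (X+H)) + (-X) + (-D)$, a Left option $K^{RL} \geq \llbracket 0 \rrbracket$.

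Each $K^R$ has one of three forms: (I) $(G^R \seq (X+H)) - X - D$ for some $G^R \in G^{\mathcal R}$; (II) $(G \seq (X+H)) - X - (G^L \seq H)$ for some $G^L \in G^{\mathcal L}$; or (III) $(G \seq (X+H)) - X^L - D$ for some $X^L \in X^{\mathcal L}$, where $X^L < X$. In case (I), Left moves inside $-D$ to its Left option $-(G^R \seq H)$ formed from the same $G^R$, reaching $(G^R \seq (X+H)) - X - (G^R \seq H)$, which is $\geq \llbracket 0 \rrbracket$ by the induction hypothesis for $(G^R,H,X)$ (valid since $G^R \in \dicot$, $\birth(G^R) < \birth(G)$, $G^R \seq H \in \dicot$). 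In case (II), Left moves $G \seq (X+H) \to G^L \seq (X+H)$ (legal as $G \not\cong \llbracket 0 \rrbracket$), reaching $(G^L \seq (X+H)) - X - (G^L \seq H) \geq \llbracket 0 \rrbracket$ by the induction hypothesis for $(G^L,H,X)$. In case (III), Left again moves $G \seq (X+H) \to G^L \seq (X+H)$ for an arbitrary $G^L \in G^{\mathcal L}$, reaching $(G^L \seq (X+H)) - X^L - D$; the induction hypothesis for $(G^L,H,X)$ gives $G^L \seq (X+H) \geq X + (G^L \seq H)$, so this game is $\geq (X - X^L) + ((G^L \seq H) - D)$, and since $X - X^L$ is a positive number while $(G^L \seq H) - D \in \dicot$ ($\dicot$ being closed under negation and disjunctive sum), the Lawnmower Theorem (Proposition \ref{prop:lawnmower}) makes the latter $> \llbracket 0 \rrbracket$, in particular $\geq \llbracket 0 \rrbracket$. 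This exhausts the Right options, completing the induction and, with the first paragraph, the theorem.

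The step I expect to need the most care is case (III), where Right makes the ``wasteful'' move of decrementing the number $X$. The naive reply --- having Left also move in the number --- is unavailable, since in $G \seq (X+H)$ the summand $X$ is playable only after $G$ is over; Left must move inside $G$, and it is precisely the induction hypothesis (which turns $G^L \seq (X+H)$ into something $\geq X + (G^L \seq H)$) together with the infinitesimality of dicotic games that makes the reply succeed --- in effect the Number Avoidance argument behind Proposition \ref{prop:number-transition}, carried out inside the induction. It is also worth keeping in mind that the hypothesis $G \seq H \in \dicot$ constrains the game \emph{form}, not merely the value, of $G \seq H$; this is exactly what keeps each $G^L \seq H$, $G^R \seq H$ encountered in the recursion dicotic, so that the induction hypothesis and the Lawnmower Theorem remain applicable.
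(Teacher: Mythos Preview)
Your proof is correct, and it takes a genuinely different route from the paper's. The paper also inducts on $\birth(G)$, but in the induction step it splits on whether $G \seq H = 0$: in the case $G \seq H \neq 0$ it invokes the Number Translation Theorem (Proposition~\ref{prop:number-transition}) directly, since a nonzero dicotic game is not a number, and simply rewrites both sides via the recursive definition; in the degenerate case $G \seq H = 0$ it gives a separate hands-on argument that Left loses $(G \seq (X+H)) - x$ playing first. By contrast, you prove the single inequality $K \geq 0$ uniformly by exhibiting a Left reply to every Right option of $K$, then recover $K \leq 0$ by the negation symmetry. Your case (III) is exactly where you reprove, inside the induction, the Number Avoidance phenomenon that the paper outsources to Proposition~\ref{prop:number-transition}; the payoff is that you never need the awkward case split at $G \seq H = 0$. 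The paper's version is a bit shorter in the generic case because it can cite the Number Translation Theorem off the shelf, but yours is more self-contained and structurally cleaner.
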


\begin{proof}[Proof of Theorem \ref{thm:num-trans}]
We prove by induction on $\birth(G)$.

For the base case $G \cong 0$, we have $G \seq (X + H) \cong 0 \seq (X + H) \cong X + H \cong  X + (0 \seq H) \cong X + (G \seq H)$.

We consider the induction step for $G \not\cong 0$ (i.e., $G \in \dicot \setminus \{0\}$) dividing into the following two cases: the case $G \seq H \neq 0$ and the case $G \seq H = 0$.
\begin{itemize}
\item The case $G \seq H \neq 0$:
We have
\begin{eqnarray*}
G \seq (X+H)
&\eqlab{A}{\cong}& \left\{G^{\mathcal{L}} \seq (X+H) \relmiddle| G^{\mathcal{R}} \seq (X+H)\right\}\\
&{\cong}& \left\{ \{G^{L} \seq (X+H) : G^L \in G^{\mathcal{L}}\} \relmiddle| \{G^{R} \seq (X+H) : G^R \in G^{\mathcal{R}}\} \right\}\\
&\eqlab{B}{=}& \left\{ \{X + (G^{L} \seq H) : G^L \in G^{\mathcal{L}}\} \relmiddle| \{ X + (G^{R} \seq H) : G^R \in G^{\mathcal{R}}\} \right\}\\
&\cong& \left\{X + (G^{\mathcal{L}} \seq H) \relmiddle| X + (G^{\mathcal{R}} \seq H)\right\}\\
&\eqlab{C}\cong& \left\{X + (G \seq H)^{\mathcal{L}} \relmiddle| X + (G \seq H)^{\mathcal{R}} \right\}\\
&\eqlab{D}{=}& X + \left\{(G \seq H)^{\mathcal{L}} \relmiddle| (G \seq H)^{\mathcal{R}} \right\}\\
&\cong& X + (G \seq H),
\end{eqnarray*}
where
(A) follows from (\ref{eq:r3xyoug24857}) since $G^{\mathcal{L}}  \neq \emptyset$ and $G^{\mathcal{R}} \neq \emptyset$ are guaranteed by $G \in \dicot \setminus \{0\}$,
(B) follows from the induction hypothesis since $G^L, G^R, (G^L \seq H)$, and $(G^R \seq H)$ are dicotic for every $G^L \in G^{\mathcal{L}}$ and $G^R \in G^{\mathcal{R}}$
since $G$ and $(G \seq H)$ are dicotic,
(C) follows from (\ref{eq:r3xyoug24857}) since $G^{\mathcal{L}}  \neq \emptyset$ and $G^{\mathcal{R}} \neq \emptyset$ are guaranteed by $G \in \dicot \setminus \{0\}$,
and (D) follows from Proposition \ref{prop:number-transition} because $G \seq H$ is not a number as follows:
for any positive $y \in \dyadic$, we have $-y < G \seq H < y$ by $G \seq H \in \dicot$ and Proposition \ref{prop:lawnmower};
hence, by the assumption $G \seq H \neq 0$, there is no $y \in \dyadic$ such that $G \seq H = y$.

\item The case $G \seq H = 0$:
It suffices to show $(G \seq (X+H)) - x = 0$ because it implies
\begin{equation*}
(G \seq (X+H)) - (X + (G \seq H)) = (G \seq (X+H)) - (x + 0) \cong  (G \seq (X+H)) - x = x - x = 0.
\end{equation*}
We prove only $(G \seq (X+H)) - x \leq 0$ by showing that Left loses in $(G \seq (X+H)) - x$ playing first.
Then the opposite inequality $(G \seq (x+H)) - x \geq 0$ follows from the symmetric argument.

First, any Left's move on the component $x$ makes Left lose because
any Left option $(G \seq (X+H)) - x^R$ has the Right option
\begin{equation*}
(G^R \seq (X+H)) - x^R
\eqlab{A}{=} X + (G^R \seq H) - x^R
= (G^R \seq H) + (x - x^R)
\eqlab{B}{<} 0,
\end{equation*}
where
(A) follows from the induction hypothesis since $G^R, (G^R \seq H) \in \dicot$ by $G, (G \seq H) \in \dicot$, 
(B) follows because $x - x^R$ is a negative number by (\ref{eq:d21j3vzuqcup}), and thus $x - x^R < G^R \seq H$ by Proposition \ref{prop:lawnmower} and $G^R \seq H \in \dicot$.

Hence, we consider Left's moves on the component $G \seq (X+H)$, which results in $(G^L \seq (X+H)) - x$ for some $G^L \in G^{\mathcal{L}}$.
We consider the following two cases separately: the case $G^L \cong 0$ and the case $G^L \in \dicot \setminus \{0\}$.

\begin{itemize}
\item The case $G^L \cong 0$:
By $G \seq H = 0$, the Left option $G^L \seq H \cong H$ of $G \seq H$ has a Right option $H^R  \leq 0$.
Thus, the game $(G^L \seq (X+H)) - x \cong (X+H) - x$ has the Right option
\begin{equation*}
(X+H^R) - x = x+H^R - x = H^R \leq 0.
\end{equation*}

\item The case $G^L \in \dicot \setminus \{0\}$:
By $G \seq H = 0$, the Left option $G^L \seq H$ of $G \seq H$ has a Right option $G^{LR} \seq H \leq 0$.
Thus, the game $(G^L \seq (X+H)) - x$ has the Right option
\begin{equation*}
(G^{LR} \seq (X+H)) - x
\eqlab{A}{=} X + (G^{LR} \seq H) - x
= x + (G^{LR} \seq H) - x
= G^{LR} \seq H
\leq 0,
\end{equation*}
where
(A) follows from the induction hypothesis since $G^{LR}, (G^{LR} \seq H) \in \dicot$ by $G, (G \seq H) \in \dicot$.
\end{itemize}
\end{itemize}
\end{proof}

Applying Theorem \ref{thm:num-trans} with $H \defeq{\cong} 0$ yields the following corollary.

\begin{corollary}
\label{cor:num-trans}
For any $G \in \dicot$ and $x \in \mathbb{D}$, we have $G \seq x = G + x$.
\end{corollary}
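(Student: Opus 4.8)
The plan is to obtain Corollary \ref{cor:num-trans} as an immediate specialization of Theorem \ref{thm:num-trans}, taking the game $H$ to be the zero game. Fix $G \in \dicot$ and $x \in \dyadic$, and apply Theorem \ref{thm:num-trans} with $H \cong \llbracket 0 \rrbracket$ and $X \cong \llbracket x \rrbracket$. Before invoking the theorem I would verify its two hypotheses for this choice. First, $X = x$ holds trivially, since $X \cong \llbracket x \rrbracket$ and, under the identification of $\llbracket x \rrbracket$ with the dyadic rational $x$ afforded by Proposition \ref{prop:number}, we have $X = x$. Second, I need $G \seq H \in \dicot$; but $G \seq H \cong G \seq \llbracket 0 \rrbracket \cong G$ by Lemma \ref{lem:seq} (i), and $G \in \dicot$ by assumption, so $G \seq H \in \dicot$ as required.

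With both hypotheses checked, Theorem \ref{thm:num-trans} yields $G \seq (X + H) = X + (G \seq H)$, and it remains only to simplify the two sides using that $\llbracket 0 \rrbracket$ is an identity for both the disjunctive sum and the sequential compound. On the left, $X + H \cong \llbracket x \rrbracket + \llbracket 0 \rrbracket \cong \llbracket x \rrbracket$, so $G \seq (X + H) \cong G \seq \llbracket x \rrbracket$, which is exactly the game we abbreviate as $G \seq x$. On the right, $G \seq H \cong G \seq \llbracket 0 \rrbracket \cong G$ by Lemma \ref{lem:seq} (i) again, so $X + (G \seq H) \cong \llbracket x \rrbracket + G \cong G + \llbracket x \rrbracket$, i.e.\ $G + x$. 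Chaining these identifications with the equality provided by the theorem gives $G \seq x = G + x$.

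Since all the genuine combinatorial work lives in Theorem \ref{thm:num-trans}, there is no real obstacle here. The only points that warrant a moment's care are: (i) confirming that the dicoticity hypothesis $G \seq H \in \dicot$ of Theorem \ref{thm:num-trans} survives the specialization $H \cong \llbracket 0 \rrbracket$, which is immediate from Lemma \ref{lem:seq} (i); and (ii) keeping the distinction between $\cong$ and $=$ straight while unwinding the identities, since $X + \llbracket 0 \rrbracket \cong X$ and $G \seq \llbracket 0 \rrbracket \cong G$ are literal equalities of game trees, whereas the final statement $G \seq x = G + x$ is only an equality of game values, inherited from the conclusion of the theorem.
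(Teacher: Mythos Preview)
Your proof is correct and follows exactly the same route as the paper, which simply states that the corollary is obtained by applying Theorem~\ref{thm:num-trans} with $H \cong 0$. Your only addition is the explicit verification of the hypotheses (in particular that $G \seq 0 \cong G \in \dicot$ via Lemma~\ref{lem:seq}(i)) and the careful tracking of $\cong$ versus $=$, both of which are accurate.
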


%%%%%%%%%%%%%%%%%%%%%%%%%%%%%%%%%
\subsection{Sequential Compounds of Integers}
\label{sec:main-int}
%%%%%%%%%%%%%%%%%%%%%%%%%%%%%%%%%

In this subsection, we consider the game value of (\ref{eq:t9wj7zfk2nku}) in the special case where $G_i \in \mathbb{Z}$ for all $i = 1, 2, \ldots, n$.

First, we confirm the following Theorem \ref{thm:int} that
a positive (resp.~negative) integer can be decomposed into the sequential compound of $n$ copies of $1$ (resp.~$-1$).

\begin{theorem}
\label{thm:int}
For any integer $n \geq 1$, we have
\begin{equation*}
\underbrace{1 \seq 1 \seq \cdots \seq 1}_{n} \cong n.
\end{equation*}
\end{theorem}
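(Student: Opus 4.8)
The plan is to argue by induction on $n$, peeling off one copy of $1$ at a time from the left. For the base case $n = 1$ there is nothing to prove: the sequential compound of the single game $1$ is $1$ itself, so $\underbrace{1}_{1} \cong 1$.

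For the inductive step, suppose $\underbrace{1 \seq \cdots \seq 1}_{n} \cong n$ for some $n \geq 1$. Using the associativity of the sequential compound (Lemma \ref{lem:seq}(ii)) to regroup, and then substituting the induction hypothesis (this is a literal identification of games, so substitution is legitimate), I would write
\[
\underbrace{1 \seq \cdots \seq 1}_{n+1} \;\cong\; 1 \seq \bigl( \underbrace{1 \seq \cdots \seq 1}_{n} \bigr) \;\cong\; 1 \seq n .
\]
It then remains to evaluate $1 \seq n$ directly from Definition \ref{def:seq}. Since $1 \cong \{ 0 \mid {} \}$, we have $1^{\mathcal{L}} = \{0\} \neq \emptyset$ and $1^{\mathcal{R}} = \emptyset$, so by (\ref{eq:r3xyoug24857}),
\[
(1 \seq n)^{\mathcal{L}} = 1^{\mathcal{L}} \seq n = \{\, 0 \seq n \,\}, \qquad (1 \seq n)^{\mathcal{R}} = n^{\mathcal{R}} = \emptyset ,
\]
where the last equality uses $n \cong \{ n-1 \mid {} \}$ from Definition \ref{def:number}(i). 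Finally, $0 \seq n \cong n$ by Lemma \ref{lem:seq}(i), hence $1 \seq n \cong \{ n \mid {} \} \cong n+1$, again by Definition \ref{def:number}(i). This closes the induction.

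I do not anticipate any real obstacle here: every step is a literal identification of game trees, so the conclusion holds with $\cong$ and not merely with $=$, and no appeal to value equality (in particular, no use of dicoticity as in Corollary \ref{cor:order-preserve}) is needed. The only point requiring a bit of care is that the unique Left option of $1 \seq n$ unfolds as $0 \seq n$ rather than directly as a copy of $n-1$; it is precisely the identity $0 \seq n \cong n$ that makes this collapse to the form $\{ n \mid {} \}$, matching the definition of the integer $n+1$.
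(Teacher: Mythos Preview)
Your proof is correct and follows essentially the same approach as the paper's own proof: induction on $n$, peeling off one copy of $1$ on the left, and computing $1 \seq n$ directly from Definition~\ref{def:seq} together with $0 \seq n \cong n$. The only cosmetic difference is that the paper writes the inductive step from $n-1$ to $n$ rather than from $n$ to $n+1$, and leaves the verification $(1 \seq \llbracket n{-}1 \rrbracket)^{\mathcal{R}} = \llbracket n{-}1 \rrbracket^{\mathcal{R}} = \emptyset$ implicit, whereas you spell it out.
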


\begin{proof}[Proof of Theorem \ref{thm:int}]
We prove by induction on $n$.
The base case $n = 1$ is trivial.
For the induction step for $n \geq 2$, we have
\begin{equation*}
\underbrace{1 \seq 1 \seq \cdots \seq 1}_{n}
\cong 1 \seq \underbrace{(1 \seq \cdots \seq 1)}_{n-1}
\eqlab{A}{\cong} 1 \seq \llbracket n-1 \rrbracket
\cong \{0|\}  \seq \llbracket n-1 \rrbracket
\eqlab{B}{\cong} \{0 \seq \llbracket n-1 \rrbracket\mid\}
\cong \{\llbracket n-1 \rrbracket\mid\}
\cong n,
\end{equation*}
where (A) follows from the induction hypothesis,
and (B) follows from (\ref{eq:r3xyoug24857}).
\end{proof}

By Theorem \ref{thm:int} and the associativity of sequential compounds,
finding the game value of (\ref{eq:t9wj7zfk2nku}) can be reduced to the case of
\begin{equation}
\label{eq:9h59ocz03gr4}
G_1 \seq G_2 \seq \cdots \seq G_n, \,\,\text{where}\,\, G_1, G_2, \ldots, G_n \in \{1, -1, \st\},
\end{equation}
by replacing all integers in (\ref{eq:t9wj7zfk2nku}) with the sequential compounds of $1$ and $-1$.
For example, 
\begin{eqnarray*}
\lefteqn{(-3) \seq \st \seq 2 \seq 3 \seq (-2) \seq \st \seq \st \seq 1}\\
&\cong& \left((-1) \seq (-1) \seq (-1)\right) \seq \st \seq \left(1 \seq 1\right) \seq \left(1 \seq 1 \seq 1\right) \seq
\left((-1) \seq (-1)\right) \seq \st \seq \st \seq 1\\
&\cong& (-1) \seq (-1) \seq (-1) \seq \st \seq 1 \seq 1 \seq 1 \seq 1 \seq 1 \seq (-1) \seq (-1) \seq \st \seq \st \seq 1.
\end{eqnarray*}

By Theorem \ref{thm:int}, the game value of (\ref{eq:9h59ocz03gr4}) is easily obtained in the case where 
$G_i \cong 1$ for all $i = 1, 2, \ldots, n$ and the case where $G_i \cong -1$ for all $i = 1, 2, \ldots, n$.
Hence, we consider the case $\{G_1, G_2, \ldots, G_n\} = \{1, -1\}$.
Without loss of generality, we may assume $G_n \cong 1$ by negating the whole sequential compound if necessary, and thus it suffices to consider only the form
\begin{equation*}
G_1 \seq G_2 \seq \cdots \seq G_n \seq (-1) \seq 1 \seq \underbrace{1 \seq \cdots \seq 1}_m, \,\,\text{where}\,\, G_1, G_2, \ldots, G_n \in \{1, -1\},
\end{equation*}
for an integer $m \geq 0$.
Then the desired game value is given as follows.

\begin{theorem}
\label{thm:int2}
For any integers $m \geq 0, n \geq 0$ and sequence $(a_0, a_1, a_2, \ldots, a_n)$ of integers such that $a_i \in \{1, -1\}$ for all $i$, $a_0 = 1$ and $a_1 = -1$,
we have
\begin{equation}
\label{eq:oi8vpbgnj8m6}
a_n \seq \cdots \seq a_1 \seq a_0 \seq m
= m + \sum_{i = 0}^n \frac{a_i}{2^i}.
\end{equation}
\end{theorem}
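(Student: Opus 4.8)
The plan is to prove, by induction on $n$, the slightly stronger statement that the sequential compound on the left-hand side of (\ref{eq:oi8vpbgnj8m6}) is \emph{literally} equal, as an element of $\short$, to the number game $\llbracket m + \sum_{i=0}^{n} a_i 2^{-i}\rrbracket$; since $\cong$ implies $=$, this yields the theorem. The first thing I would record is that, unwinding Definition \ref{def:seq} using $1 \cong \{0 \mid\,\}$, $-1 \cong \{\,\mid 0\}$ and $0 \seq H \cong H$ (Lemma \ref{lem:seq}(i)), one obtains for every $H \in \short$ the two ``unfolding'' identities $1 \seq H \cong \{H \mid H^{\mathcal{R}}\}$ and $(-1) \seq H \cong \{H^{\mathcal{L}} \mid H\}$. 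Using associativity of $\seq$ (Lemma \ref{lem:seq}(ii)) I would write the left-hand side as $a_n \seq (a_{n-1} \seq \cdots \seq a_0 \seq m)$ and apply the induction hypothesis to the inner compound, noting that the truncated sequence $(a_0,\dots,a_{n-1})$ still satisfies all hypotheses of the theorem (the condition $a_1 = -1$ being vacuous when $n - 1 = 0$).

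The second ingredient is an elementary computation with the partial sums $s_k := \sum_{i=0}^{k} a_i 2^{-i}$. Since $a_0 = 1$, $a_1 = -1$ and each $a_i \in \{1,-1\}$, one has $s_0 = 1 \in \mathbb{Z}$, while for every $k \geq 1$ the number $s_k$ is an \emph{odd} multiple of $2^{-k}$ lying strictly between $0$ and $1$: the lower bound uses only $a_i \geq -1$, the upper bound uses $a_1 = -1$, and $2^k s_k = a_k + (\text{even integer})$ is odd. Hence for $k \geq 1$ the game $\llbracket m + s_k\rrbracket$ has the form $\llbracket m'/2^k\rrbracket$ with $m'$ odd and $m' \geq 1$, so Definition \ref{def:number}(ii) gives the explicit decomposition $\llbracket m + s_k\rrbracket \cong \{\llbracket m + s_k - 2^{-k}\rrbracket \mid \llbracket m + s_k + 2^{-k}\rrbracket\}$, whereas for $k = 0$ Definition \ref{def:number}(i) gives $\llbracket m+1\rrbracket \cong \{\llbracket m\rrbracket \mid\,\}$. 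In particular, once the induction hypothesis identifies the inner compound with $\llbracket m + s_{n-1}\rrbracket$, its sets of Left and Right options are known explicitly.

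The inductive step is then the computation of $a_n \seq \llbracket m + s_{n-1}\rrbracket$ and the recognition of the outcome. If $a_n = 1$ then $n \geq 2$ (as $a_1 = -1 \neq 1$), so $\llbracket m + s_{n-1}\rrbracket$ has the single Right option $\llbracket m + s_{n-1} + 2^{-(n-1)}\rrbracket$, and the first unfolding identity yields $1 \seq \llbracket m + s_{n-1}\rrbracket \cong \{\llbracket m + s_{n-1}\rrbracket \mid \llbracket m + s_{n-1} + 2^{-(n-1)}\rrbracket\}$; comparing numerators over $2^n$ shows this is exactly the Definition \ref{def:number}(ii) expansion of $\llbracket m + s_{n-1} + 2^{-n}\rrbracket = \llbracket m + s_n\rrbracket$. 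If $a_n = -1$ and $n \geq 2$, the second unfolding identity together with the single Left option $\llbracket m + s_{n-1} - 2^{-(n-1)}\rrbracket$ gives $\{\llbracket m + s_{n-1} - 2^{-(n-1)}\rrbracket \mid \llbracket m + s_{n-1}\rrbracket\}$, the expansion of $\llbracket m + s_{n-1} - 2^{-n}\rrbracket = \llbracket m + s_n\rrbracket$. If $a_n = -1$ and $n = 1$, the second unfolding identity applied to $\llbracket m+1\rrbracket \cong \{\llbracket m\rrbracket \mid\,\}$ gives $\{\llbracket m\rrbracket \mid \llbracket m+1\rrbracket\} \cong \llbracket m + \frac{1}{2}\rrbracket = \llbracket m + s_1\rrbracket$. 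The base case $n = 0$ is $1 \seq \llbracket m\rrbracket \cong \{\llbracket m\rrbracket \mid\,\} \cong \llbracket m+1\rrbracket$, using that $\llbracket m\rrbracket$ has no Right option for every integer $m \geq 0$.

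I do not expect a genuinely deep obstacle here: the argument is a single induction driven by the two unfolding identities and Definition \ref{def:number}. The delicate point is the boundary case $n = 1$ --- equivalently, the fact that when the inner compound is an integer $\llbracket y\rrbracket$ one has $1 \seq \llbracket y\rrbracket \cong \llbracket y+1\rrbracket$ rather than $\llbracket y + \frac{1}{2}\rrbracket$ --- which is exactly why the hypothesis $a_1 = -1$ is indispensable; the remaining effort is the mildly error-prone bookkeeping of numerators and powers of two in the two ``recognition'' steps.
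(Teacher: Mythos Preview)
Your proof is correct and follows the same overall architecture as the paper's --- induction on $n$, unfolding the outermost $a_n$ via Definition~\ref{def:seq}, and recognizing the result through Definition~\ref{def:number}(ii) --- but the execution is genuinely streamlined. The paper proves only the game-value equality $=$; to compute the Right option of the compound when $a_n = 1$ it must locate the largest index $p$ with $a_p = -1$, apply the induction hypothesis at the \emph{two} truncation points $n-1$ and $p-1$, and then carry out a telescoping calculation (steps (C)--(E)) to match (\ref{eq:g0h3zqunv24n}). By strengthening the induction to literal identity $\cong$, you may first replace the inner compound by $\llbracket m + s_{n-1}\rrbracket$ and only then unfold $a_n \seq (\cdot)$ via your identities $1 \seq H \cong \{H \mid H^{\mathcal{R}}\}$ and $(-1) \seq H \cong \{H^{\mathcal{L}} \mid H\}$; the options of $\llbracket m + s_{n-1}\rrbracket$ are then read off directly from Definition~\ref{def:number}, and no search for $p$ or telescoping is needed. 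The price is the small lemma that $s_k$ is an odd multiple of $2^{-k}$ in $(0,1)$ for $k \geq 1$, which you verify cleanly. Both approaches hinge on the hypothesis $a_1 = -1$ in the same way: it guarantees $a_n = 1 \Rightarrow n \geq 2$ (so the inner value is a non-integer dyadic and has a Right option), which is exactly the boundary case you flag.
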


\begin{proof}[Proof of Theorem \ref{thm:int2}]
We prove by induction on $n$.
For the base case $n = 0$, we have
\begin{equation*}
a_0 \seq m
\cong 1 \seq m
\eqlab{A}\cong \llbracket m+1 \rrbracket
= m+1
= m + \sum_{i = 0}^n \frac{a_i}{2^i},
\end{equation*}
where (A) follows from Theorem \ref{thm:int}.
We consider the induction step for $n \geq 1$ dividing into the following two cases: the case $a_n = 1$ and the case $a_n = -1$.
\begin{itemize}
\item The case $a_n = 1$:
By $a_1 = -1$, there exists an integer $1 \leq p \leq n-1$ such that
\begin{equation}
\label{eq:yre1jqaxsjr1}
a_n = a_{n-1} = \cdots = a_{p+1} = 1, \quad a_p = -1.
\end{equation}
We have
\begin{eqnarray*}
\lefteqn{a_n \seq \cdots \seq a_1 \seq a_0 \seq m}\\
&\eqlab{A}{\cong}& \left\{a_{n-1} \seq \cdots \seq a_1 \seq a_0 \seq m \relmiddle|
a_{p-1} \seq \cdots \seq a_1 \seq a_0 \seq m \right\}\\
&\eqlab{B}{=}& \left\{m + \sum_{i = 0}^{n-1} \frac{a_i}{2^i} \relmiddle| m + \sum_{i = 0}^{p-1} \frac{a_i}{2^i} \right\}\\
&=& \left\{m + \frac{1}{2^n}\sum_{i = 0}^{n-1} 2^{n-i}a_i  \relmiddle| m + \frac{1}{2^n}\sum_{i = 0}^{p-1} 2^{n-i}a_i  \right\}\\
&\eqlab{C}{=}& \left\{m + \frac{1}{2^n}\left(\sum_{i = 0}^{p-1} 2^{n-i}a_i   - 2^{n-p} + \sum_{i = p+1}^{n-1} 2^{n-i} \right) \relmiddle| m + \frac{1}{2^n} \sum_{i = 0}^{p-1} 2^{n-i}a_i   \right\}\\
&=& \left\{m + \frac{1}{2^n}\left(\sum_{i = 0}^{p-1} 2^{n-i}a_i   - 2\right) \relmiddle| m + \frac{1}{2^n}\left(\sum_{i = 0}^{p-1} 2^{n-i}a_i \right) \right\}\\
&\eqlab{D}{=}& m + \frac{1}{2^n}\left(\sum_{i = 0}^{p-1} 2^{n-i}a_i  - 1 \right)\\
&=& m + \frac{1}{2^n}\left(\sum_{i = 0}^{p-1} 2^{n-i}a_i - 2^{n-p} + \sum_{i = p+1}^{n} 2^{n-i} \right)\\
&\eqlab{E}{=}& m + \sum_{i = 0}^n \frac{a_i}{2^i}
\end{eqnarray*}
as desired, where
(A) follows from (\ref{eq:yre1jqaxsjr1}) and the definition of sequential compounds,
(B) follows from the induction hypothesis,
(C) follows from (\ref{eq:yre1jqaxsjr1}),
(D) follows from (\ref{eq:g0h3zqunv24n}),
and (E) follows from (\ref{eq:yre1jqaxsjr1}).

\item The case $a_n = -1$:
By $a_0 = 1$, there exists an integer $0 \leq p \leq n-1$ such that
\begin{equation*}
a_n = a_{n-1} = \cdots = a_{p+1} = -1, \quad a_p = 1.
\end{equation*}
By (\ref{eq:yre1jqaxsjr1}) and the definition of sequential compounds, we have
\begin{eqnarray*}
a_n \seq \cdots \seq a_1 \seq a_0 \seq m
\eqlab{A}{\cong} \left\{ a_{p-1} \seq \cdots \seq a_1 \seq a_0 \seq m
\relmiddle|  a_{n-1} \seq \cdots \seq a_1 \seq a_0 \seq m \right\},
\end{eqnarray*}
where $a_{p-1} \seq \cdots \seq a_1 \seq a_0 \seq m$ is regarded as $m$ if $p = 0$.
Then the desired result follows from the symmetric discussion to the case $a_n = 1$.
\end{itemize}
\end{proof}

\begin{remark}
For the sequential compound of (\ref{eq:oi8vpbgnj8m6}), we consider the following ordinal sum
\begin{equation}
\label{eq:t1qu4m05lzyo}
G \defeq\cong m \osum a_0 \osum a_1 \osum \cdots \osum a_{n-1} \osum a_n,
\end{equation}
in which a player plays on exactly one component, and then the components to the right of the played one are immediately discarded.
The well-known result
\begin{equation}
\label{eq:1pa48vr656z7}
G = m + \sum_{i = 0}^n \frac{a_i}{2^i},
\end{equation}
which has an identical right-hand side to (\ref{eq:oi8vpbgnj8m6}),
can be found in \cite{ANW19, BCG18, Con00, HG16, Sie13}, where the ordinal sum is represented as a position of \emph{Hackenbush}.
Note that the sequential compound (\ref{eq:oi8vpbgnj8m6}) is obtained by imposing the following constraint to  (\ref{eq:t1qu4m05lzyo}):
a player must play on the rightmost component in which the player has an option.
On the other hand, \cite[Theorem 31]{Wah14} shows that a player does not need to play the components other than
the rightmost one such that the player has an option in (\ref{eq:t1qu4m05lzyo}) as a more general statement. 
Therefore, it is expected that imposing the constraint to (\ref{eq:t1qu4m05lzyo}) does not affect the game value, and the formula (\ref{eq:1pa48vr656z7}) can be diverted to sequential compounds.
This discussion is consistent with Theorem \ref{thm:int2}.
\end{remark}

%%%%%%%%%%%%%%%%%%%%%%%%%%%%%%%%%
\subsection{Sequential Compounds of Integers and Stars}
\label{sec:main-int-star}
%%%%%%%%%%%%%%%%%%%%%%%%%%%%%%%%%

In this subsection, we show the game value of (\ref{eq:9h59ocz03gr4}) in the general case $G_1, G_2, \ldots, G_n \in \{1, -1, \st\}$.

By focusing on the rightmost star, (\ref{eq:9h59ocz03gr4}) can be seen as
\begin{equation}
\label{eq:rvozr2sroqib}
G_1 \seq G_2 \seq \cdots \seq G_{p-1} \seq \st \seq G_{p+1} \seq G_{p+2} \seq \cdots \seq G_{n}
\end{equation}
for some $1 \leq p \leq n$,
where $G_1, G_2, \ldots, G_{p-1} \in \{1, -1, \st\}$ and $G_{p+1}, G_{p+2}, \ldots, G_n \in \{1, -1\}$.
Then we have
\begin{equation}
\label{eq:utq0wz6zqy06}
\left( G_1 \seq G_2 \seq \cdots \seq G_{p-1} \seq \st  \right) \in \dicot
\end{equation}
by Theorem \ref{lem:dicot} since $\st \in \dicot \setminus \{0\}$.
Also, it holds that
\begin{equation}
\label{eq:jyskxwxnmemc}
G_{p+1} \seq G_{p+2} \seq \cdots \seq G_{n} = x
\end{equation}
for some $x \in \dyadic$ by Theorems \ref{thm:int} and \ref{thm:int2} since $G_{p+1}, G_{p+2}, \ldots, G_n \in \{1, -1\}$.
Hence, we have
\begin{eqnarray*}
\lefteqn{G_1 \seq G_2 \seq \cdots \seq G_{p-1} \seq \st \seq G_{p+1} \seq G_{p+2} \seq \cdots \seq G_{n}} \nonumber\\
&\eqlab{A}{=}& G_1 \seq G_2 \seq \cdots \seq G_{p-1} \seq \st \seq x \\
&\eqlab{B}{=}& \left(G_1 \seq G_2 \seq \cdots \seq G_{p-1} \seq \st\right) + x,
\end{eqnarray*}
where
(A) follows from (\ref{eq:utq0wz6zqy06}), (\ref{eq:jyskxwxnmemc}), and Corollary \ref{cor:order-preserve},
and (B) follows (\ref{eq:utq0wz6zqy06}), (\ref{eq:jyskxwxnmemc}),  and Corollary \ref{cor:num-trans}.
Therefore, it suffices to find the game value of $G_1 \seq G_2 \seq \cdots \seq G_{p-1} \seq \st$,
that is, we may assume that the rightmost component of the given sequential compound is a star without loss of generality.

Further, if there are three or more stars at the end of the given sequential compound, we may remove two of them by (ii) of the following theorem.
\begin{theorem}
\label{thm:star}
For any $G \in \tilde{\mathbb{G}}$, the following statements (i) and (ii) hold.
\begin{enumerate}[(i)]
\item If $G \in \dicot$, then $G \seq \st \seq \st= G$.
\item $G \seq \st \seq \st \seq \st = G \seq \st$.
\end{enumerate}
\end{theorem}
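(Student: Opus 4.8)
The plan is to reduce both parts to the single fact that $\st \seq \st$ has the same game value as $\llbracket 0 \rrbracket$, and then to feed this into the machinery already available for dicotic first components, namely Corollary \ref{cor:order-preserve} and Theorem \ref{lem:dicot}. No induction on $\birth(G)$ should be needed.

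For part (i), I would first unfold $\st \seq \st$ using Definition \ref{def:seq}: since $\st \cong \{0 \mid 0\}$ has a nonempty Left option set $\{0\}$, we get $(\st \seq \st)^{\mathcal{L}} = \st^{\mathcal{L}} \seq \st = \{0 \seq \st\} = \{\st\}$ using $0 \seq \st \cong \st$ from Lemma \ref{lem:seq} (i), and symmetrically $(\st \seq \st)^{\mathcal{R}} = \{\st\}$; hence $\st \seq \st \cong \{\st \mid \st\}$. The analogous unfolding of a disjunctive sum shows $\st + \st \cong \{\st \mid \st\}$ as well, so in fact $\st \seq \st \cong \st + \st$ as games, and therefore $\st \seq \st = \llbracket 0 \rrbracket$ as values by Proposition \ref{prop:star}. (Alternatively one could check $o(\{\st\mid\st\}) = \mathscr{P}$ directly and apply Corollary \ref{cor:order-outcome}, but the isomorphism route is cleaner.) Now assuming $G \in \dicot$, associativity (Lemma \ref{lem:seq} (ii)) gives $G \seq \st \seq \st \cong G \seq (\st \seq \st)$; applying Corollary \ref{cor:order-preserve} with $H \cong \st \seq \st$ and $H' \cong \llbracket 0 \rrbracket$, which is legitimate precisely because $G$ is dicotic and $\st \seq \st = \llbracket 0 \rrbracket$, yields $G \seq (\st \seq \st) = G \seq \llbracket 0 \rrbracket$, and $G \seq \llbracket 0 \rrbracket \cong G$ by Lemma \ref{lem:seq} (i). Chaining these three steps gives $G \seq \st \seq \st = G$.

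For part (ii), the key observation is that $G \seq \st$ is automatically dicotic for every $G \in \tilde{\mathbb{G}}$: by Theorem \ref{lem:dicot}, $G \seq \st \in \dicot \setminus \{0\} \subseteq \dicot$, since $\st \in \dicot \setminus \{0\}$. Hence part (i) applies with $G \seq \st$ in place of $G$, giving $(G \seq \st) \seq \st \seq \st = G \seq \st$; rebracketing the left-hand side by associativity (Lemma \ref{lem:seq} (ii)) turns it into $G \seq \st \seq \st \seq \st$, which completes the argument.

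I do not anticipate a serious obstacle. The only points demanding care are (a) verifying $\st \seq \st \cong \st + \st$ at the level of option sets rather than merely values, so that Proposition \ref{prop:star} can be invoked, and (b) checking that the dicotic hypothesis of Corollary \ref{cor:order-preserve} is actually met — trivial in part (i) and supplied by Theorem \ref{lem:dicot} in part (ii). Everything else is bookkeeping with associativity and the identity law for $\seq$.
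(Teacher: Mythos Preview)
Your proposal is correct and follows essentially the same route as the paper: both parts hinge on $\st \seq \st = 0$ fed into Corollary \ref{cor:order-preserve} for a dicotic first component, with part (ii) reduced to part (i) via Theorem \ref{lem:dicot}. Your write-up is simply more explicit than the paper's about why $\st \seq \st = 0$, spelling out $\st \seq \st \cong \{\st \mid \st\} \cong \st + \st$ rather than asserting it.
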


\begin{proof}[Proof of Theorem \ref{thm:star}]
(Proof of (i)) 
We have
\begin{equation*}
G \seq (\st \seq \st)
\eqlab{A}{=} G \seq 0 \cong G,
\end{equation*}
where (A) follows from Corollary \ref{cor:order-preserve} since $G \in \dicot$ and $\st \seq \st = 0$.

(Proof of (ii))
We have $G \seq \st \seq \st \seq \st = (G \seq \st) \seq \st \seq \st  = G \seq \st$ directly from (i) of this lemma since $G \seq \st \in \dicot$ by Theorem \ref{lem:dicot}.
\end{proof}

Since we can remove two stars by applying Theorem \ref{thm:star} as long as there are three or more stars at the end, 
we may assume that there are at most two stars at the end of the given sequential compound without loss of generality.
Namely, it suffices to consider only the following two forms:
\begin{eqnarray}
&&G_1 \seq G_2 \seq \cdots \seq G_n \seq \st, \label{eq:8qqobw8bib4c}\\
&&G_1 \seq G_2 \seq \cdots \seq G_n \seq \st \seq \st, \label{eq:0lbxvnb8pohv}
\end{eqnarray}
where $n \geq 0$ and $G_n \in \{1, -1\}$.

The case $n = 0$ is straightforward since (\ref{eq:8qqobw8bib4c}) is $\st$ and (\ref{eq:0lbxvnb8pohv}) is $\st \seq \st = 0$ then.
Thus, by negating the whole sequential compound if necessary, we may limit to the following two forms:
\begin{eqnarray}
&&G_1 \seq G_2 \seq \cdots \seq G_n \seq 1 \seq \st, \label{eq:slcsc43u2u2z}\\
&&G_1 \seq G_2 \seq \cdots \seq G_n \seq (-1) \seq \st \seq \st, \label{eq:2vmcdrxoww2z}
\end{eqnarray}
where $n \geq 0$.

By separating the position of each star,  (\ref{eq:slcsc43u2u2z}) and (\ref{eq:2vmcdrxoww2z}) can be decomposed into
\begin{equation}
\label{eq:z09f8dujvi74}
\left(G^{(m)}_{l^{(m)}} \seq \cdots \seq G^{(m)}_1 \seq \st\right)
\seq \left(G^{(m-1)}_{l^{(m-1)}}\seq \cdots \seq G^{(m-1)}_1 \seq \st\right)
\seq \cdots
\seq \left(G^{(1)}_{l^{(1)}}  \seq \cdots  \seq G^{(1)}_1\seq \st\right)
\end{equation}
for some integers $m \geq 1$, $l^{(i)} \geq 0$, and $G^{(i)}_j \in \{1, -1\}$.
Further, each term $\left(G^{(i)}_{l^{(i)}}\seq \cdots \seq G^{(i)}_1 \seq \st\right)$ is in the form
\begin{equation}
\label{eq:zva05m56z6rt}
\underbrace{1 \seq 1 \seq \cdots \seq 1}_{a_n} \seq (-1) \seq \underbrace{1 \seq 1 \seq \cdots \seq 1}_{a_{n-1}} \seq (-1) \seq
\cdots \seq \underbrace{1 \seq 1 \seq \cdots \seq 1}_{a_1} \seq (-1) \seq \underbrace{1 \seq 1 \seq \cdots \seq 1}_{a_0} \seq \st
\end{equation}
for some $n \geq 0$ and sequence $(a_0, a_1, \ldots, a_n)$ of non-negative integers.
Therefore,  (\ref{eq:slcsc43u2u2z}) and (\ref{eq:2vmcdrxoww2z}) can be decomposed into a sequential compound of games in the form (\ref{eq:zva05m56z6rt}).
We define a notation $[a_n, a_{n-1}, \ldots, a_2, a_1, a_0]$ for this as follows.
\begin{definition}
\label{def:block}
For any integer $n \geq 0$ and sequence $(a_0, a_1, \ldots, a_n)$ of non-negative integers, we define
\begin{equation*}
[a_n, a_{n-1}, \ldots, a_1, a_0]  \defeq{\cong} a_n \seq (-1) \seq a_{n-1} \seq (-1) \seq \cdots \seq (-1) \seq a_1 \seq (-1) \seq  a_0 \seq \st.
\end{equation*}
\end{definition}

By this notation, (\ref{eq:slcsc43u2u2z}) can be decomposed as follows.
\begin{equation}
\label{eq:cmw7ygqcgqi2}
\left [a^{(m)}_{n^{(m)}}, a^{(m)}_{n^{(m)}-1}, \ldots, a^{(m)}_0 \right]
\seq \left[a^{(m-1)}_{n^{(m-1)}}, a^{(m-1)}_{n^{(m-1)}-1}, \ldots, a^{(m-1)}_0 \right] 
\seq \cdots \seq \left[a^{(1)}_{n^{(1)}}, a^{(1)}_{n^{(1)}-1}, \ldots, a^{(1)}_0 \right],
\,\,\text{where}\,\, a^{(1)}_0 \geq 1
\end{equation}
for some $m \geq 1$.
Similarly, (\ref{eq:2vmcdrxoww2z}) can be decomposed as
\begin{eqnarray}
\label{eq:mh9njmbt23ou}
\left [a^{(m)}_{n^{(m)}}, a^{(m)}_{n^{(m)}-1}, \ldots, a^{(m)}_0 \right]
\seq \left[a^{(m-1)}_{n^{(m-1)}}, a^{(m-1)}_{n^{(m-1)}-1}, \ldots, a^{(m-1)}_0 \right] 
\seq \cdots \seq \left[a^{(1)}_{n^{(1)}}, a^{(1)}_{n^{(1)}-1}, \ldots, a^{(1)}_0 \right]
\seq \st, \nonumber\\
\text{where}\,\, n^{(1)} \geq 1 \,\,\text{and}\,\, a^{(1)}_0 = 0
\end{eqnarray}
for some $m \geq 1$.
The last component $\st$ in (\ref{eq:mh9njmbt23ou}) can be replaced with $[0]$; however, we leave it as $\st$ for later discussion.

\begin{example}
A game
\begin{equation*}
G \defeq{\cong} (-1) \seq 1 \seq 1 \seq (-1) \seq 1 \seq \st \seq \st \seq 1 \seq 1 \seq (-1) \seq 1 \seq 1 \seq 1 \seq (-1) \seq \st \seq \st
\end{equation*}
in the form (\ref{eq:2vmcdrxoww2z}) is rewritten as
\begin{eqnarray*}
G &\cong& \left((-1) \seq 1 \seq 1 \seq (-1) \seq 1 \seq \st\right) \seq \left(\st\right) \seq \left(1 \seq 1 \seq (-1) \seq 1 \seq 1 \seq 1 \seq (-1) \seq \st\right) \seq \st \\
&\cong& \left(0 \seq (-1) \seq 2 \seq (-1) \seq 1 \seq \st\right) \seq \left(0 \seq \st\right) \seq \left(2 \seq (-1) \seq 3 \seq (-1) \seq 0 \seq \st\right) \seq \st \\
&\cong& [0, 2, 1] \seq [0] \seq [2, 3, 0] \seq \st
\end{eqnarray*}
in the form (\ref{eq:mh9njmbt23ou}).
\end{example}

The following Theorem \ref{thm:int-star} is a key theorem to obtain the game values of (\ref{eq:cmw7ygqcgqi2}) and (\ref{eq:mh9njmbt23ou}).
From now on, for $n \in \mathbb{Z}$ and $G \in \short$, we write $n \cdot G$ as $nG$ for simplicity\footnote{In combinatorial game theory, for $n \in \mathbb{Z}$, the notation $n\st$  usually stands for $n + \st$,
while we use $n\st$ to express $n \cdot \st$ here.
Note that $n\st \cong n \cdot \st$ is equal to $0$ for even $n$, and $\st$ for odd $n$ in our notation.}.

\begin{theorem}
\label{thm:int-star}
Choose an integer $n \geq 0$, a sequence $(a_0, a_1, \ldots, a_n)$ of non-negative integers, and $J \in \short$ arbitrarily.
Assume that one of the following conditions (a)--(c) holds.
\begin{enumerate}[(a)]
\item $J\cong 0, a_0 \geq 1$.
\item $J \cong \st, n \geq 1, a_0 = 0$.
\item $J < 0$ and
\begin{equation}
\label{eq:dnqwaieisfhc}
J \cong c \st + \sum_{i = 1}^k b_i \down^i
\end{equation}
for some integer $k \geq1$ and $(c, b_1, b_2, \ldots, b_k)$ of non-negative integers such that
\begin{equation}
\label{eq:3dj9mrwnisxe}
b_1 \geq b_2 \geq \cdots \geq b_k \geq 1.
\end{equation}
\end{enumerate}
Then we have
\begin{equation*}
[a_n, a_{n-1}, \ldots, a_0] \seq J = \langle a_n, a_{n-1}, \ldots, a_0\rangle_{J},
\end{equation*}
where
\begin{align}
\label{eq:xhzxoe0bbtgq}
\langle a_n, a_{n-1}, \ldots, a_0\rangle_{J}
\defeq{\cong} \begin{cases}
\left(1 + \displaystyle\sum_{i = 0}^n a_i \right) \st + \displaystyle\sum_{i = 0}^n \left(1 + \displaystyle\sum_{j = i}^n a_j \right) \down^{i+1} - \down &\,\,\text{if}\,\, J \cong 0,\\
\left(\displaystyle\sum_{i = 1}^n a_i \right) \st + \displaystyle\sum_{i = 1}^n \left(1 + \displaystyle\sum_{j = i}^n a_j \right) \down^i &\,\,\text{if}\,\, J \cong \st,\\
 J + \left(1+\displaystyle\sum_{i = 0}^n a_i \right) \st + \displaystyle\sum_{i = 1}^k \left(1+ \displaystyle\sum_{j = 0}^n a_j \right)  \down^i + \displaystyle\sum_{i = 1}^n \left(1 + \displaystyle\sum_{j = i}^n a_j \right) \down^{k+i} &\,\,\text{if}\,\, J < 0.\\
\end{cases}
\end{align}
\end{theorem}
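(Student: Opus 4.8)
The plan is to prove the statement by induction on $\birth([a_n, \ldots, a_0])$, treating the three hypotheses (a)--(c) as three parallel but essentially independent inductions: the boundary datum $J$ keeps its type — $0$, $\st$, or a negative downtimal of the form (\ref{eq:dnqwaieisfhc}) — all the way down the recursion, so no case feeds into another, and only the degenerate positions $[0]\seq 0 \cong \st$, $[0]\seq\st = \st\seq\st = 0$ and $[0]\seq J \cong \{J\mid J\}$ require separate (and immediate) handling. The computational engine is the explicit description of the options of $G \defeq\cong [a_n, \ldots, a_0]\seq J$ read off from Definitions \ref{def:block} and \ref{def:seq}: if $n\geq1$ and $a_n\geq1$ then $G \cong \{\,[a_n-1, a_{n-1}, \ldots, a_0]\seq J \mid [a_{n-1}, \ldots, a_0]\seq J\,\}$; if $n\geq1$ and $a_n=0$ then $G \cong (-1)\seq([a_{n-1}, \ldots, a_0]\seq J) \cong \{\,([a_{n-1}, \ldots, a_0]\seq J)^{\mathcal L} \mid [a_{n-1}, \ldots, a_0]\seq J\,\}$; and if $n=0$ and $a_0\geq1$ then $G \cong \{\,[a_0-1]\seq J \mid J\,\}$ (reading $[0]\seq J$ as $\st\seq J$). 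In each shape the $[\cdot]$-configuration is strictly smaller, so the induction hypothesis supplies the value of every option of $G$ as one of the explicit downtimals in (\ref{eq:xhzxoe0bbtgq}). It is essential here \emph{not} to pass to a game value before forming a sequential compound with the non-dicotic component $-1$, since $\seq$ does not respect $=$ in its left argument (cf.\ the counterexample to the converse of Theorem \ref{lem:order-preserve}); one unwinds game forms structurally and only passes to values at the top level, where Corollary \ref{cor:replace} legitimizes it.

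Once the option values of $G$ are identified, it remains to prove $G = \langle a_n, \ldots, a_0\rangle_J$, which I would do by establishing $G \geq \langle a_n, \ldots, a_0\rangle_J$ and $G \leq \langle a_n, \ldots, a_0\rangle_J$ separately, each by exhibiting a survival strategy for the appropriate player in the short difference game $G - \langle a_n, \ldots, a_0\rangle_J$. Moves inside the downtimal summand $-\langle a_n, \ldots, a_0\rangle_J$ are controlled by Lemma \ref{lem:uptimal-option}, which bounds the Left and Right options of a downtimal; moves inside $G$ are controlled by the induction hypothesis together with the uptimal arithmetic of Propositions \ref{prop:uptimal-comp}, \ref{prop:uptimal-star} and the distinctness statement of Proposition \ref{prop:uptimal-unique}; and any number-valued offsets — which occur in case (c), where $J$ and hence $\langle\cdots\rangle_J$ may carry a part that is not a downtimal, or when a trailing number is folded into the innermost block — are absorbed via Theorem \ref{thm:num-trans}, Corollary \ref{cor:num-trans} and the Lawnmower Theorem (Proposition \ref{prop:lawnmower}).

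Two points in this programme need attention. First, one must verify that the recursion is \emph{closed} in case (c): the output $\langle a_n, \ldots, a_0\rangle_J$ must again be strictly negative and again of the form (\ref{eq:dnqwaieisfhc}) with down-coefficients obeying the monotonicity (\ref{eq:3dj9mrwnisxe}), so that hypothesis (c) may be reapplied to it — this is precisely where condition (\ref{eq:3dj9mrwnisxe}) is needed, through the comparisons in Proposition \ref{prop:uptimal-star}. Second, in the shape $a_n=0$ one compares $G$ against $\langle 0, a_{n-1}, \ldots, a_0\rangle_J$, which differs from $\langle a_{n-1}, \ldots, a_0\rangle_J$ by a single extra term $\down^{\,m}$ (at the depth obtained by counting $(-1)$-blocks); matching this perturbation to the options of $G$ displayed above is a short further application of Lemma \ref{lem:uptimal-option}.

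The main obstacle, I expect, is not any individual inequality — each is routine given Lemma \ref{lem:uptimal-option} and the uptimal calculus — but the bulk and interlocking of the case analysis: three hypotheses on $J$, several structural shapes of $[a_n, \ldots, a_0]$, and two directions of comparison, all requiring the downtimal coefficients $1 + \sum_{j=i}^n a_j$ and the star parity $\bigl(1 + \sum_{i=0}^n a_i\bigr) \bmod 2$ to be tracked correctly through every move. Closing the induction cleanly will therefore depend on first fixing a normalized template for the intermediate positions, so that these coefficients and parities can be read off mechanically.
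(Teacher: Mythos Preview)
Your outline for the $\geq$ direction is essentially the paper's argument, but the $\leq$ direction has a genuine gap. In the difference game $G' = ([a_n,\ldots,a_0]\seq J) - \langle a_n,\ldots,a_0\rangle_J$, consider Left's move on the downtimal summand, say via the Right option of a $\st$ in $\langle a_n,\ldots,a_0\rangle_J$, landing in $([a_n,\ldots,a_0]\seq J) - (\langle a_n,\ldots,a_0\rangle_J + \st)$. Right must now find a response with value $\leq 0$. Playing on the bracket gives, by your induction hypothesis, $\langle a_{n-1},\ldots,a_0\rangle_J - \langle a_n,\ldots,a_0\rangle_J - \st$, which (e.g.\ for $J\cong 0$, $a_n=0$) equals $\up^{n+1}+\st>0$ and so fails. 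Playing on the downtimal via a Left option of some $\down^j$ leads to $([a_n,\ldots,a_0]\seq J) - \bigl(\langle a_n,\ldots,a_0\rangle_J - \sum_{i=1}^{j}\down^i\bigr)$: the bracket is unchanged, the subtracted downtimal is no longer $\langle a_n,\ldots,a_0\rangle_J$, and nothing in your induction hypothesis speaks to this position. Lemma~\ref{lem:uptimal-option} bounds the option Left just took, but it does not by itself tell you that this new difference is $\leq 0$.

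The paper closes this by strengthening the statement to be proved: it shows (Proposition~\ref{prop:proof-neg}) that $([a_n,\ldots,a_0]\seq J) - H' \leq 0$ for \emph{every} downtimal $H'=q'\st+\sum_{i=1}^d p'_i\down^i$ satisfying $H'\geq \langle a_n,\ldots,a_0\rangle_J$, $p'_i\leq p_i$, and $p'_1\geq\cdots\geq p'_d$. This is a loaded induction, carried by a secondary induction on $\sum_i p'_i$ inside the primary one on $n+\sum_i a_i$; after Left perturbs the downtimal and Right responds (on either component, depending on whether $p_d=1$ or $p_d\geq 2$), the resulting $H''$ again satisfies the three constraints, and with strictly smaller $\sum_i p''_i$. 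Your plan does not anticipate this strengthening, and without it the survival strategy for Right cannot be completed. Two smaller points: there are no ``number-valued offsets'' in case~(c) --- $J$ is already a pure downtimal by~(\ref{eq:dnqwaieisfhc}), and Theorem~\ref{thm:num-trans} plays no role inside this proof; and the ``closure'' check you describe (that $\langle a_n,\ldots,a_0\rangle_J$ again satisfies~(c)) is needed for the iterated \emph{application} of the theorem, not for its proof, since $J$ is held fixed throughout the induction.
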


The proof of Theorem \ref{thm:int-star} is deferred in Appendix \ref{sec:proof-int-star}, which relies on the following lemma.

\begin{lemma}
\label{lem:seq-neg}
For any integer $n \geq 0$, sequence $(a_0, a_1, \ldots, a_n)$ of non-negative integers, and $J \in \short$, if the one of the conditions (a)--(c) in Theorem \ref{thm:int-star} holds, then $[a_n, a_{n-1}, \ldots, a_0] \seq J < 0$.
\end{lemma}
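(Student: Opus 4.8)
I would prove Lemma~\ref{lem:seq-neg} by induction on $n$, simultaneously handling the three cases (a), (b), (c) on the form of $J$. The statement $[a_n,\ldots,a_0]\seq J < 0$ is equivalent to showing that Right wins $[a_n,\ldots,a_0]\seq J$ regardless of who moves first, i.e.\ that $[a_n,\ldots,a_0]\seq J \leq 0$ and $[a_n,\ldots,a_0]\seq J \parallel 0$ or $<0$; concretely, using Table~\ref{tab:order-outcome}, it suffices to show Right wins moving first and Right wins moving second. I expect it is cleaner to unfold $[a_n,\ldots,a_0]$ from the outside: by Definition~\ref{def:block}, $[a_n,\ldots,a_0] = a_n \seq (-1) \seq [a_{n-1},\ldots,a_0]'$ where the inner block is either another $[\cdot]$-block (when $n\geq 1$) or, when $n=0$, just $a_0 \seq \st$. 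So the left-hand side has the shape $a_n \seq (-1) \seq K$ where $K \cong [a_{n-1},\ldots,a_0]\seq J$ (for $n\geq1$) is itself covered by a smaller instance of the lemma and hence $K<0$ by the induction hypothesis, or (for $n=0$) $K \cong a_0 \seq \st \seq J$, which I handle directly.

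\textbf{Key steps.} First, the base case $n=0$: here I must show $[a_0]\seq J = (a_0 \seq \st)\seq J < 0$ in each of the three sub-cases. Since $a_0 \seq \st \in \dicot$ by Theorem~\ref{lem:dicot}, in cases (a) and (b) this reduces to comparing a dicotic game against $0$ or $\st$ and one can use Propositions~\ref{prop:lawnmower}, \ref{prop:uptimal-comp}, \ref{prop:uptimal-star}; in case (c), $J<0$ is a (non-infinitesimal negative? no — infinitesimal) down-combination, and I would use Corollary~\ref{cor:order-preserve} to replace $J$ and then Theorem~\ref{thm:num-trans}-type reasoning is not available since $J$ is not a number, so instead I argue directly with options. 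Second, the inductive step: write the game as $a_n \seq (-1) \seq K$ with $K < 0$ known. The crux is that prepending $(-1)$ then $a_n$ copies of $1$ (via sequential compound) to a negative game keeps it negative — intuitively Right gets a free move from the $(-1)$, which should dominate. I would show Right wins moving second by responding inside the $(-1)$-component or inside $K$ (using $K<0$), and Right wins moving first by playing the $(-1)$ down to $0$, landing in $0 \seq K \cong K < 0$ if $a_n = 0$, or more carefully tracking the $a_n$ leading $1$'s; here Theorem~\ref{thm:int} ($\underbrace{1\seq\cdots\seq1}_{a_n}\cong a_n$) lets me collapse the leading block so that $a_n \seq (-1) \seq K \cong a_n \seq ((-1)\seq K)$ and $(-1)\seq K$ has Left option $\cong K$ and no immediate... wait, $(-1) = \{\,\mid 0\}$ so $(-1)\seq K$ has only a Right option $0 \seq K \cong K$; then $a_n \seq ((-1)\seq K)$, with $a_n \geq 1$, has Left options pushing down the $1$-stack and a forced structure making Right's position strong.

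\textbf{Main obstacle.} The hard part will be case (c), where $J$ is a nonzero infinitesimal of the form $c\st + \sum b_i \down^i$ with $b_1 \geq \cdots \geq b_k \geq 1$: proving $[a_n,\ldots,a_0]\seq J < 0$ requires understanding the options of $[a_n,\ldots,a_0]$ well enough to see that Right can always respond to any Left move and eventually force a win, and Left's moves on the $[\cdot]$-block produce games that are themselves sequential compounds ending in $J$ — so I will need Lemma~\ref{lem:uptimal-option} to control the options of $J$ and the structural analysis of $[a_n,\ldots,a_0]$'s options. In particular, I expect the argument routes through showing that every Left option of $[a_n,\ldots,a_0]\seq J$ still has a Right response of the same negative form (a smaller instance of the lemma, or a game covered by the $J<0$ hypothesis with a modified but still monotone-decreasing $b$-sequence), so the induction must be set up carefully to be closed under these transitions — this bookkeeping, rather than any single inequality, is the real work. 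I would also lean on Lemma~\ref{lem:seq-outcome} to reduce outcome computations of $G\seq H$ to those of $G\seq\st$ and $o(G)$ wherever the tail simplifies.
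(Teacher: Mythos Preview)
Your induction-and-options plan would eventually succeed, but you are missing the key shortcut: Lemma~\ref{lem:seq-outcome} already does almost all the work, and the paper's proof is only a few lines per case. In case~(c), since $J<0$ means $o(J)=\mathscr{R}$, Lemma~\ref{lem:seq-outcome} gives $o(G\seq J)=\mathscr{R}$ for \emph{any} $G$, hence $[a_n,\ldots,a_0]\seq J<0$ immediately --- no induction, no tracking of the monotone $b$-sequence, no appeal to Lemma~\ref{lem:uptimal-option}. You flagged~(c) as the main obstacle, but it is in fact the trivial case. In case~(a), rewrite $a_0\seq\st\cong \llbracket a_0-1\rrbracket \seq 1\seq\st$ (using $a_0\geq1$ and Theorem~\ref{thm:int}); one checks directly that $o(1\seq\st)=\mathscr{R}$, and then Lemma~\ref{lem:seq-outcome} propagates $\mathscr{R}$ leftward through the whole chain. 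Case~(b) is identical with the suffix $(-1)\seq\st\seq\st$ (available since $n\geq1$ and $a_0=0$), whose outcome is again $\mathscr{R}$ by a two-move check.

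Your inductive argument is not wrong --- the step ``$K<0 \Rightarrow a_n\seq(-1)\seq K<0$'' is precisely the instance of Lemma~\ref{lem:seq-outcome} you would end up reproving --- but the worry about closing the induction under option-transitions in case~(c) is misplaced: the specific uptimal structure of $J$ in condition~\eqref{eq:3dj9mrwnisxe} plays no role whatsoever in Lemma~\ref{lem:seq-neg}; only $J<0$ is used. That structure is needed later, in the proof of Theorem~\ref{thm:int-star}, not here.
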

See Appendix \ref{subsec:proof-seq-neg} for the proof of Lemma \ref{lem:seq-neg}.

Now, we consider the game value of (\ref{eq:cmw7ygqcgqi2}).
We obtain
\begin{equation}
\label{eq:b17wbdx2owaa}
\left[a^{(1)}_{n^{(1)}}, a^{(1)}_{n^{(1)}-1}, \ldots, a^{(1)}_0 \right]
= \left(1 + \displaystyle\sum_{i = 0}^{n^{(1)}} a^{(1)}_i \right) \st + \displaystyle\sum_{i = 0}^{n^{(1)}} \left(1 + \displaystyle\sum_{j = i}^{n^{(1)}} a^{(1)}_j \right) \down^{i+1} - \down \defeq{\cong} J^{(1)}
\end{equation}
by applying Theorem \ref{thm:int-star} with $J \defeq{\cong} 0$.
Also,
\begin{equation*}
\left [a^{(m)}_{n^{(m)}}, a^{(m)}_{n^{(m)}-1}, \ldots, a^{(m)}_0 \right]
\seq \left[a^{(m-1)}_{n^{(m-1)}}, a^{(m-1)}_{n^{(m-1)}-1}, \ldots, a^{(m-1)}_0 \right] 
\seq \cdots \seq \left[a^{(2)}_{n^{(2)}}, a^{(2)}_{n^{(2)}-1}, \ldots, a^{(2)}_0 \right]
\end{equation*}
is dicotic by Definition \ref{def:block} and Theorem \ref{lem:dicot}.
Hence, by applying Theorem \ref{cor:order-preserve}, the game value of (\ref{eq:cmw7ygqcgqi2}) is equal to the game value of
\begin{equation}
\label{eq:kfm8wrxljibn}
\left [a^{(m)}_{n^{(m)}}, a^{(m)}_{n^{(m)}-1}, \ldots, a^{(m)}_0 \right]
\seq \left[a^{(m-1)}_{n^{(m-1)}}, a^{(m-1)}_{n^{(m-1)}-1}, \ldots, a^{(m-1)}_0 \right] 
\seq \cdots \seq \left[a^{(2)}_{n^{(2)}}, a^{(2)}_{n^{(2)}-1}, \ldots, a^{(2)}_0 \right]
\seq J^{(1)}
\end{equation}
by replacing the rightmost component with $J^{(1)}$.

Also, we have $J^{(1)} < 0$ by Lemma \ref{lem:seq-neg}.
Therefore, we obtain
\begin{eqnarray*}
\lefteqn{\left[a^{(2)}_{n^{(2)}}, a^{(2)}_{n^{(2)}-1}, \ldots, a^{(2)}_0 \right] \seq J^{(1)}}\\
&=&  J^{(1)} + \left(1+\displaystyle\sum_{i = 0}^{n^{(2)}} a^{(2)}_i \right) \st + \displaystyle\sum_{i = 1}^{n^{(1)}+1} \left(1+ \displaystyle\sum_{j = 0}^{n^{(2)}} a^{(2)}_j \right)  \down^i + \displaystyle\sum_{i = 1}^{n^{(2)}} \left(1 + \displaystyle\sum_{j = i}^{n^{(2)}} a^{(2)}_j \right) \down^{n^{(1)}+1+i}\\
&\defeq{\cong}& J^{(2)}
\end{eqnarray*}
by applying Theorem \ref{thm:int-star} with $J \defeq{\cong} J^{(1)}$,
and 
\begin{equation*}
\left [a^{(m)}_{n^{(m)}}, a^{(m)}_{n^{(m)}-1}, \ldots, a^{(m)}_0 \right]
\seq \left[a^{(m-1)}_{n^{(m-1)}}, a^{(m-1)}_{n^{(m-1)}-1}, \ldots, a^{(m-1)}_0 \right] 
\seq \cdots \seq \left[a^{(3)}_{n^{(3)}}, a^{(3)}_{n^{(3)}-1}, \ldots, a^{(3)}_0 \right]
\end{equation*}
is dicotic by Definition \ref{def:block} and Theorem \ref{lem:dicot}.
Thus, by Theorem \ref{cor:order-preserve}, we can rewrite (\ref{eq:kfm8wrxljibn}) further as
\begin{equation*}
\left [a^{(m)}_{n^{(m)}}, a^{(m)}_{n^{(m)}-1}, \ldots, a^{(m)}_0 \right]
\seq \left[a^{(m-1)}_{n^{(m-1)}}, a^{(m-1)}_{n^{(m-1)}-1}, \ldots, a^{(m-1)}_0 \right] 
\seq \cdots \seq \left[a^{(3)}_{n^{(3)}}, a^{(3)}_{n^{(3)}-1}, \ldots, a^{(3)}_0 \right]
\seq J^{(2)},
\end{equation*}
and $J^{(2)} < 0$ also holds by Lemma \ref{lem:seq-neg}.

By repeating this process, we finally obtain the desired game value of (\ref{eq:cmw7ygqcgqi2}) in the form $c\st + \sum_{i = 1}^k b_i \down^i$
for some integer $k \geq 0$ and sequence $(c, b_1, b_2, \ldots, b_k)$ of non-negative integers.

To see the game value of (\ref{eq:mh9njmbt23ou}), we perform the first step as
\begin{equation*}
\left[a^{(1)}_{n^{(1)}}, a^{(1)}_{n^{(1)}-1}, \ldots, a^{(1)}_0 \right] \seq \st
= \left(\displaystyle\sum_{i = 1}^{n^{(1)}} a^{(1)}_i \right) \st + \displaystyle\sum_{i = 1}^{n^{(1)}} \left(1 + \displaystyle\sum_{j = i}^{n^{(1)}} a^{(1)}_j \right) \down^{i}  \defeq{\cong} J^{(1)}
\end{equation*}
by applying Theorem \ref{thm:int-star} with $J \defeq{\cong} \st$ instead of (\ref{eq:b17wbdx2owaa}),
and then join the process of the case of (\ref{eq:cmw7ygqcgqi2}).

Note that Theorem \ref{thm:int-star} gives a slightly stronger result than necessary to obtain the game value of (\ref{eq:slcsc43u2u2z}) and (\ref{eq:2vmcdrxoww2z}): in the case $J < 0$, the assertion $[a_n, a_{n-1}, \ldots, a_0] \seq J = \langle a_n, a_{n-1}, \ldots, a_0 \rangle_{J}$ also holds for $J$ that is not necessarily in the form of the sequential compounds of integers and stars as long as (\ref{eq:dnqwaieisfhc}) is satisfied.

As a result of Theorem \ref{thm:int-star}, we observe that
\begin{equation}
\label{eq:136lkbj4einf}
1 \seq \st = (-1) \seq \st \seq \st = \down,
\end{equation}
and if
\begin{equation*}
G_i \seq G_{i+1} \seq \cdots \seq G_n \seq J = c \st + \sum_{j = 1}^k b_j \down^j, \quad b_k \geq 1,
\end{equation*}
then
\begin{eqnarray}
\st \seq G_i \seq G_{i+1} \seq \cdots \seq G_n \seq J &=& (c+1) \st + \sum_{i = 1}^k (b_i+1) \down^i, \label{eq:poju4ae0ockf}\\
1 \seq G_i \seq G_{i+1} \seq \cdots \seq G_n \seq J &=& (c+1) \st + \sum_{i = 1}^k (b_i+1) \down^i, \label{eq:krlep0ujgnyb}\\
(-1) \seq G_i \seq G_{i+1} \seq \cdots \seq G_n \seq J &=& c \st + \sum_{i = 1}^k b_i \down^i + \down^{k+1}. \label{eq:8fv4lfimxw7e}
\end{eqnarray}
This yields a simpler algorithm to calculate the game value of (\ref{eq:slcsc43u2u2z}) (resp.~(\ref{eq:2vmcdrxoww2z})) as follows:
we start with $G \defeq{\cong} 1 \seq \st$ (resp.~$G \defeq{\cong} (-1) \seq \st \seq \st$), which is equal to $-0.1$ in the uptimal notation by (\ref{eq:136lkbj4einf});
then we repeatedly add $1, -1, \st$ to the left of $G$ appropriately to make the desired sequential compound.
By (\ref{eq:poju4ae0ockf})--(\ref{eq:8fv4lfimxw7e}), appending $1$ or $\st$ increases every digit of the uptimal notation by $1$ and flips the presence/absence of a star; appending $-1$ appends a digit $1$ to the end of the uptimal notation. 

\begin{example}
The game value of $(-1) \seq 1 \seq \st \seq (-1) \seq (-1) \seq (-1) \seq 1 \seq (-1) \seq \st \seq \st$
is obtained by starting with $G \defeq{\cong} (-1) \seq \st \seq \st$ and
then appending $1, -1, \st$ to the left of $G$ sequentially as follows:
\begin{eqnarray}
(-1) \seq \st \seq \st &=& -0.1, \nonumber\\
1 \seq (-1) \seq \st \seq \st &=& -0.2 + \st,\nonumber\\
(-1) \seq 1 \seq (-1) \seq \st \seq \st &=& -0.21 + \st,\nonumber\\
(-1) \seq (-1) \seq 1 \seq (-1) \seq \st \seq \st &=& -0.211 + \st,\nonumber\\
(-1) \seq (-1) \seq (-1) \seq 1 \seq (-1) \seq \st \seq \st &=& -0.2111 + \st,\nonumber\\
\st \seq (-1) \seq (-1) \seq (-1) \seq 1 \seq (-1) \seq \st \seq \st &=& -0.3222,\nonumber\\
1 \seq \st \seq (-1) \seq (-1) \seq (-1) \seq 1 \seq (-1) \seq \st \seq \st &=& -0.4333 + \st,\nonumber\\
(-1) \seq 1 \seq \st \seq (-1) \seq (-1) \seq (-1) \seq 1 \seq (-1) \seq \st \seq \st &=& -0.43331 + \st. \label{eq:k3lt7bukhbuy}
\end{eqnarray}

Similarly,
the game value of $(-1) \seq 1 \seq (-1) \seq \st \seq (-1) \seq 1 \seq \st$
is obtained as follows:
\begin{eqnarray}
1 \seq \st &=& -0.1,\nonumber\\
(-1) \seq 1 \seq \st  &=& -0.11,\nonumber\\
\st \seq (-1) \seq 1 \seq \st  &=& -0.22 + \st,\nonumber\\
(-1) \seq \st \seq (-1) \seq 1 \seq \st  &=& -0.221 + \st,\nonumber\\
1 \seq (-1) \seq \st \seq (-1) \seq 1 \seq \st  &=& -0.332,\nonumber\\
(-1) \seq 1 \seq (-1) \seq \st \seq (-1) \seq 1 \seq \st  &=& -0.3321. \label{eq:ngrwgijvthef}
\end{eqnarray}
\end{example}

\begin{example}
\label{ex:6seq}
We calculate the game values of the following six sequential compounds $G_1, G_2, \ldots, G_6$ and 
find the outcome class of their disjunctive sum $G_1 + G_2 + \cdots + G_6$ by adding their game values:
\begin{eqnarray*}
G_1 &\defeq{\cong}& (-1) \seq (-3) \seq (-1),\\
G_2 &\defeq{\cong}& (-2) \seq 1 \seq (-1) \seq 4,\\
G_3 &\defeq{\cong}& \st \seq \st \seq \st,\\
G_4 &\defeq{\cong}& 1 \seq (-1) \seq \st \seq 3 \seq (-1) \seq 1 \seq \st \seq \st \seq \st \seq \st \seq (-2) \seq 1,\\
G_5 &\defeq{\cong}& (-1) \seq 1 \seq (-1) \seq \st \seq (-1) \seq 1 \seq \st \seq 4 \seq (-1),\\
G_6 &\defeq{\cong}& \st \seq \st \seq \st \seq \st \seq \st \seq \st \seq (-2) \seq 2.
\end{eqnarray*}

The game value of $G_1$ is obtained as
\begin{equation*}
G_1
\cong (-1) \seq (-3) \seq (-1)
\eqlab{A}{\cong} (-1) \seq (-1) \seq (-1) \seq (-1) \seq (-1)
\eqlab{B}{\cong} -5,
\end{equation*}
where
(A) follows from Theorem \ref{thm:int},
and (B) follows from Theorem \ref{thm:int}.

The game value of $G_2$ is obtained as
\begin{equation*}
G_2
\cong (-2) \seq 1 \seq (-1) \seq 4
\eqlab{A}{\cong} (-1) \seq (-1) \seq 1 \seq (-1) \seq 1 \seq 3
\eqlab{B}{=} 3 + \frac{1}{2^0} + \frac{(-1)}{2^1} + \frac{1}{2^2} + \frac{(-1)}{2^3} + \frac{(-1)}{2^4}
= \frac{57}{16},
\end{equation*}
where
(A) follows from Theorem \ref{thm:int},
and (B) follows from Theorem \ref{thm:int2}.

The game value of $G_3$ is obtained as
\begin{equation*}
G_3 
\cong \st \seq \st \seq \st
\eqlab{A}{=} \st,
\end{equation*}
where (A) follows from Theorem \ref{thm:star} (ii).

We consider the game value of $G_4$.
We have 
\begin{equation}
\label{eq:rcknn3hs76w8}
(-2) \seq 1
\eqlab{A}{\cong} (-1) \seq (-1) \seq 1
\eqlab{B}{=} \frac{1}{2^0} + \frac{(-1)}{2^1} + \frac{(-1)}{2^2}
= \frac{1}{4},
\end{equation}
where
(A) follows from Theorem \ref{thm:int},
and (B) follows from Theorem \ref{thm:int2}.
Hence, the game value of $G_4$ is given as
\begin{eqnarray*}
G_4
&\cong& 1 \seq (-1) \seq \st \seq 3 \seq (-1) \seq 1 \seq \st \seq \st \seq \st \seq \st \seq (-2) \seq 1\\
&\eqlab{A}{=}& 1\seq (-1) \seq \st \seq 3 \seq (-1) \seq 1 \seq \st \seq \st \seq \st \seq \st \seq \frac{1}{4}\\
&\eqlab{B}=& \left(1 \seq (-1) \seq \st \seq 3 \seq (-1) \seq 1 \seq \st \seq \st \seq \st \seq \st \right) + \frac{1}{4}\\
&\eqlab{C}=& \left(1 \seq (-1) \seq \st \seq 3 \seq (-1) \seq 1 \seq \st \seq \st\right) + \frac{1}{4}\\
&\cong& -\left((-1) \seq 1 \seq \st \seq (-3) \seq 1 \seq (-1) \seq \st \seq \st\right) + \frac{1}{4}\\
&\eqlab{D}\cong& -\left((-1) \seq 1 \seq \st \seq (-1) \seq (-1) \seq (-1) \seq 1 \seq (-1) \seq \st \seq \st\right) + \frac{1}{4}\\
&\eqlab{E}=& 0.43331 + \st + \frac{1}{4},
\end{eqnarray*}
where
(A) follows from Corollary \ref{cor:order-preserve} and (\ref{eq:rcknn3hs76w8}),
(B) follows from Corollary \ref{cor:num-trans},
(C) follows from Theorem \ref{thm:star} (ii),
(D) follows from Theorem \ref{thm:int},
and (E) follows from (\ref{eq:k3lt7bukhbuy}).

We consider the game value of $G_5$.
We have
\begin{eqnarray}
4 \seq (-1)
&\eqlab{A}\cong& 1 \seq 1 \seq 1 \seq 1 \seq (-1)\nonumber\\
&\cong& -\left( (-1) \seq (-1) \seq (-1) \seq (-1) \seq 1 \right)\nonumber\\
&\eqlab{B}=& -\left( \frac{1}{2^0} + \frac{(-1)}{2^1} + \frac{(-1)}{2^2} + \frac{(-1)}{2^3} + \frac{(-1)}{2^4}\right)\nonumber\\
&=& -\frac{1}{16}, \label{eq:hv1vefa9il55}
\end{eqnarray}
where
(A) follows from Theorem \ref{thm:int},
and (B) follows from Theorem \ref{thm:int2}.
Hence, the game value of $G_5$ is given as
\begin{eqnarray*}
G_5
&\cong& (-1) \seq 1 \seq (-1) \seq \st \seq (-1) \seq 1 \seq \st \seq 4 \seq (-1)\\
&\eqlab{A}=& (-1) \seq 1 \seq (-1) \seq \st \seq (-1) \seq 1 \seq \st \seq \left(-\frac{1}{16}\right)\\
&\eqlab{B}=& \left((-1) \seq 1 \seq (-1) \seq \st \seq (-1) \seq 1 \seq \st\right) - \frac{1}{16}\\
&\eqlab{C}=& -0.3321 - \frac{1}{16},
\end{eqnarray*}
where
(A) follows from Corollary \ref{cor:order-preserve} and (\ref{eq:hv1vefa9il55}),
(B) follows from Corollary \ref{cor:num-trans},
and (C) follows from (\ref{eq:ngrwgijvthef}).

We consider the game value of $G_6$.
We have
\begin{eqnarray}
\label{eq:f956i202ir8g}
(-2) \seq 2
\eqlab{A}\cong (-1) \seq (-1) \seq 1 \seq 1
\eqlab{B}= 1 + \frac{1}{2^0} + \frac{(-1)}{2^1} + \frac{(-1)}{2^2}
= \frac{5}{4},
\end{eqnarray}
where
(A) follows from Theorem \ref{thm:int},
and (B) follows from Theorem \ref{thm:int2}.
Hence, the game value of $G_6$ is given as
\begin{eqnarray*}
G_6
&\cong& \st \seq \st \seq \st \seq \st \seq \st \seq \st \seq (-2) \seq 2\\
&\eqlab{A}=& \st \seq \st \seq \st \seq \st \seq \st \seq \st \seq \frac{5}{4}\\
&\eqlab{B}=& \left(\st \seq \st \seq \st \seq \st \seq \st \seq \st\right) + \frac{5}{4}\\
&\eqlab{C}=& \left(\st \seq \st \seq \st \seq \st\right) + \frac{5}{4}\\
&\eqlab{D}=& \left(\st \seq \st\right) + \frac{5}{4}\\
&=& 0 + \frac{5}{4}\\
&=& \frac{5}{4},
\end{eqnarray*}
where
(A) follows from Corollary \ref{cor:order-preserve} and (\ref{eq:f956i202ir8g}),
(B) follows from Corollary \ref{cor:num-trans},
(C) follows from Theorem \ref{thm:star} (ii),
and (D) follows from Theorem \ref{thm:star} (ii).

Finally, by summing up the game values, we obtain
\begin{eqnarray*}
\sum_{i = 1}^6 G_i
= (-5) + \frac{57}{16} + \st + \left( 0.43331 + \st + \frac{1}{4}\right) + \left(- 0.3321 - \frac{1}{16}\right) + \frac{5}{4}
= 0.10121
\eqlab{A}{\parallel} 0,
\end{eqnarray*}
where (A) follows from Propositions \ref{prop:uptimal-comp} and \ref{prop:uptimal-star}.
Consequently, we obtain $o\left(\sum_{i = 1}^6 G_i \right) = \mathscr{N}$ by Corollary \ref{cor:order-outcome}.
\end{example}

%%%%%%%%%%%%%%%%%%%%%%%%%%%%%%%%%%
\section{Conclusion}
\label{sec:conclusion}
%%%%%%%%%%%%%%%%%%%%%%%%%%%%%%%%%%

This paper discussed sequential compounds of partizan games defined by \cite{SU93}.
We first showed three theorems on the general properties of sequential compounds,
and then by applying them, we gave integers $c \geq 0, k \geq 0$ and a sequence $(b_1, b_2, \ldots, b_k)$ of integers such that
\begin{equation}
\label{eq:bw0imc65otco}
G = c\st + \sum_{i = 1}^k b_i \up^k
\end{equation}
for a given sequential compound
\begin{equation}
\label{eq:pgsh2xy6ouif}
G \defeq{\cong} G_1 \seq G_2 \seq \cdots \seq G_n, \quad \,\,\text{where}\,\, G_1, G_2, \ldots, G_n \in \{1, -1, \st\}.
\end{equation}
For a given disjunctive sum $H \defeq{\cong} H_1 + H_2 + \cdots + H_n$ of sequential compounds (i.e., each $H_i$ is in the form (\ref{eq:pgsh2xy6ouif})),
we can determine the outcome class of $H$ by rewriting every $H_i$ in the form (\ref{eq:bw0imc65otco}) and
summing them up as shown in Example \ref{ex:6seq}.

The future work includes the following topics.
\begin{itemize}
\item The generalization to the case of allowing $\st k$, defined recursively as follows, as components of sequential compounds:
\begin{equation*}
\st k \defeq{\cong} \left\{0, \st, \st2, \ldots, \st(k-1) \relmiddle| 0, \st, \st2, \ldots, \st(k-1) \right\}
\end{equation*}
for an integer $k \geq 1$.

\item The generalization to sequential compounds on a directed acyclic graph:
every node $v$ is assigned a game $G_v$; a player can play only on $G_x$ such that for all other nodes $y$ reachable to $x$, the player has no option in $G_y$.
Note that $G_1 \seq G_2 \seq \cdots \seq G_n$ in this paper is a particular case in which the graph is a path graph.

\item The discussion in the case of adopting (\ref{eq:9znzjudvuld5}) as the definition of sequential compounds instead of Definition \ref{def:seq}.
\end{itemize}

%% The Appendices part is started with the command \appendix;
%% appendix sections are then done as normal sections
 \appendix

%%%%%%%%%%%%%%%%%%%%%%%%%%%%%%%%%%
\section{Proof of Theorem \ref{thm:int-star}}
\label{sec:proof-int-star}
%%%%%%%%%%%%%%%%%%%%%%%%%%%%%%%%%%

We fix an integer $n \geq 0$, a sequence $(a_0, a_1, \ldots, a_n)$ of non-negative integers, and $J \in \short$, and assume that one of the conditions (a)--(c) of Theorem \ref{thm:int-star} holds.
We prove that the assertion of the theorem holds for the given $(n, a_0, a_1, \ldots, a_n, J)$ in this section.

First, we state some observations and lemmas for the proof as follows.
We confirm
\begin{align}
&[0] \seq 0 \cong \st \cong \langle 0 \rangle_0, \label{eq:rgour9uavjr2} \\
&[0] \seq \st \cong \st \seq \st = 0 \cong \langle 0 \rangle_{\st}, \label{eq:wsptsdy7tlwx} \\
&\deg( \langle a_n, a_{n-1}, \ldots, a_0 \rangle_J ) \geq 1 \label{lem:seq-deg}
\end{align}
directly from (\ref{eq:xhzxoe0bbtgq}).

The proof of Theorem \ref{thm:int-star} also relies on the following Lemmas \ref{lem:ub9enb4w0s6a}--\ref{lem:e1xr1ufi5spj}.
The proofs of them are deferred to Appendices \ref{subsec:proof-ub9enb4w0s6a}--\ref{subsec:proof-e1xr1ufi5spj}, respectively.

\begin{lemma}
\label{lem:ub9enb4w0s6a}
The following statements (i) and (ii) hold.
\begin{enumerate}[(i)]
\item If $n \geq 1$, then $\langle a_{n-1}, \ldots, a_0\rangle_{J} - \langle a_n, a_{n-1}, \ldots, a_0 \rangle_{J} \not\leq 0$.
\item If $n = 0$, then $J - \langle a_0 \rangle_{J} \not\leq 0$.
\end{enumerate}
\end{lemma}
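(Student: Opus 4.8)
The plan is a direct computation. Every game appearing in the statement is, by the definition (\ref{eq:xhzxoe0bbtgq}), of the form $c\st+\sum_{i\geq1}b_i\down^i$ with non-negative integer coefficients, so it suffices to write out the relevant difference explicitly and read off its sign using Propositions \ref{prop:uptimal-comp}, \ref{prop:uptimal-star} and \ref{prop:star}. First I would isolate the elementary fact that drives everything: \emph{if $G=c\st+\sum_{i=1}^{d}b_i\up^i$ with $b_1,\ldots,b_d\geq0$ and $b_d\geq1$, then $G\not\leq0$.} Since $2\st=0$ we may take $c\in\{0,1\}$; if $c=0$ then $G\geq\up^d>0$ (as $G-\up^d$ is a non-negative combination of the positive games $\up^1,\ldots,\up^d$, using Proposition \ref{prop:uptimal-comp}), so $G>0$; if $c=1$ then $G\geq\st$, and $G\leq0$ would force $\st\leq0$, contradicting $\st\parallel0$ (Proposition \ref{prop:star}).

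For part (i), let $D$ be $\langle a_{n-1},\ldots,a_0\rangle_{J}-\langle a_n,a_{n-1},\ldots,a_0\rangle_{J}$. I would expand $D$ termwise from (\ref{eq:xhzxoe0bbtgq}) in the three cases $J\cong0$, $J\cong\st$, $J<0$ separately. In each case the summand $J$ (when it occurs) and the lone $-\down$ (which occurs only when $J\cong0$) cancel, and the coefficient bookkeeping collapses to
\begin{equation*}
D=(a_n\bmod 2)\,\st+a_n\sum_{i=1}^{M-1}\up^{i}+(1+a_n)\,\up^{M},
\end{equation*}
where $M=n+1$ if $J\cong0$, $M=n$ if $J\cong\st$, and $M=k+n$ if $J<0$. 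The one delicate point is the boundary case $n=1$ under condition (b), where the shorter bracket degenerates: there $\langle a_0\rangle_{\st}=0$ by (\ref{eq:wsptsdy7tlwx}), and the displayed formula for $D$ still holds with the middle sum empty. Since $1+a_n\geq1$ and the remaining coefficients are non-negative, the elementary fact above gives $D\not\leq0$.

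Part (ii) is the same computation specialized to $n=0$; note that condition (b) cannot hold then, so only (a) and (c) remain. Expanding (\ref{eq:xhzxoe0bbtgq}) directly gives $J-\langle a_0\rangle_{J}=((1+a_0)\bmod 2)\,\st+a_0\up$ under condition (a) (where $a_0\geq1$), and $J-\langle a_0\rangle_{J}=((1+a_0)\bmod 2)\,\st+(1+a_0)\sum_{i=1}^{k}\up^{i}$ under condition (c) (where $1+a_0\geq1$ and $k\geq1$). In both cases the leading $\up$-coefficient is at least $1$, so the elementary fact again finishes the proof.

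The only genuine labor is the index-shift arithmetic in expanding (\ref{eq:xhzxoe0bbtgq}) — in particular tracking the partial sums $\sum_{j=i}^{n}a_j$ as the top index drops from $n$ to $n-1$ — together with a routine verification of the degenerate cases ($a_n=0$, and $n=1$ under (b)). I do not anticipate any conceptual obstacle beyond that.
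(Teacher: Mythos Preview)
Your proposal is correct and takes essentially the same approach as the paper: a direct case-by-case expansion of $\langle\,\cdots\rangle_J$ from (\ref{eq:xhzxoe0bbtgq}) in each of the three cases (a), (b), (c), followed by reading off the sign via Propositions \ref{prop:uptimal-comp} and \ref{prop:uptimal-star}. Your presentation is slightly more streamlined---you isolate the sign-reading step as a standalone ``elementary fact'' and unify the three cases into a single formula with a parameter $M$---but the underlying computation and the treatment of the degenerate case $n=1$ under (b) match the paper's proof exactly.
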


 \begin{lemma}
 \label{lem:3rp1dfrhr0nc}
For any Left option $\langle a_n, a_{n-1}, \ldots, a_0 \rangle_{J}^L$ of $\langle a_n, a_{n-1} \ldots, a_0 \rangle_{J}$, the following statements (i) and (ii) hold.
\begin{enumerate}[(i)]
\item If $a_n = a_{n-1} = \cdots = a_{p+1} = 0, a_p \geq 1$ for an integer $0 \leq p \leq n$, then $\langle a_n, a_{n-1}, \ldots, a_0 \rangle_{J}^L \leq \langle a_p-1, a_{p-1}, \ldots a_0 \rangle_{J}$.
\item If $a_n = a_{n-1} = \cdots = a_0 = 0$, then $\langle a_n, a_{n-1}, \ldots, a_0 \rangle_{J}^L \leq J$.
\end{enumerate}
\end{lemma}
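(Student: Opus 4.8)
The plan is to read the Left options of $\langle a_n, a_{n-1}, \ldots, a_0\rangle_J$ straight off the defining formula (\ref{eq:xhzxoe0bbtgq}) and to bound each of them with Lemma \ref{lem:uptimal-option}. In all three cases, (\ref{eq:xhzxoe0bbtgq}) exhibits $\langle a_n,\ldots,a_0\rangle_J$ as a disjunctive sum $A + B$, where $A$ is \emph{syntactically} of the shape $c\st + \sum_{i=1}^{d}b_i\down^i$ with non-negative integer coefficients satisfying $b_1,\ldots,b_d\ge 1$ and $d = \deg(A)$, and the extra summand $B$ is $\up$ when $J\cong 0$ (recall $-\down\cong\up$), is $0$ when $J\cong\st$ (so $A+B\cong A$), and is the summand $J$ itself when $J<0$. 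Concretely $d = n+1$ when $J\cong 0$, $d = n$ when $J\cong\st$, and $d = k+n$ when $J<0$; in every case $d\ge 1$ (using $n\ge1$ under (b) and $k\ge1$ under (c)), so Lemma \ref{lem:uptimal-option} applies to $A$. Since $(A+B)^{\mathcal{L}} = (A^{\mathcal{L}}+B)\cup(A+B^{\mathcal{L}})$, every Left option has the form $A^L+B$ or $A+B^L$, and I would handle these two families in turn. Throughout, write $Q$ for the target: $Q \cong \langle a_p-1, a_{p-1},\ldots,a_0\rangle_J$ in case (i) and $Q \cong J$ in case (ii) (in case (ii), $a_0 = 0$ rules out (a), so only (b) and (c) occur).

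The first reduction is the game-value identity
\begin{equation*}
Q \;=\; \langle a_n, a_{n-1},\ldots,a_0\rangle_J - \st - \sum_{i=1}^{d}\down^i .
\end{equation*}
Granting it, the options $A^L+B$ are settled: Lemma \ref{lem:uptimal-option}(i) gives $A^L\le A-\sum_{i=1}^{d}\down^i+\st$, hence $A^L+B\le \langle a_n,\ldots,a_0\rangle_J-\sum_{i=1}^{d}\down^i+\st = Q$, the last equality because $\st=-\st$. I would prove the displayed identity by substituting the closed form (\ref{eq:xhzxoe0bbtgq}), using the hypothesis $a_n=\cdots=a_{p+1}=0$ (so that the ``top'' $\down$-blocks contribute coefficient $1$), and checking that replacing the leading entry $a_p$ by $a_p-1$ lowers the coefficient of $\st$ and of each $\down^i$ with $1\le i\le d$ by exactly $1$, leaving every other coefficient unchanged; Proposition \ref{prop:uptimal-unique} then promotes this coefficient equality to an equality of game values. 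In case (ii) the same substitution shows $\langle a_n,\ldots,a_0\rangle_J - \sum_{i=1}^{d}\down^i + \st = J$, which is the displayed identity with $Q\cong J$.

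For the options $A+B^L$: when $J\cong\st$ there are none. When $J\cong 0$, $B=\up$ has only the Left option $0$, so $A+B^L\cong A = \langle a_n,\ldots,a_0\rangle_J + \down$; subtracting $Q$ and using the identity leaves $2\down+\st+\sum_{i=2}^{d}\down^i$, which is negative because $\st<2\up$ by Proposition \ref{prop:uptimal-star} (with $k=1$) and $\down^i<0$ by Proposition \ref{prop:uptimal-comp}. When $J<0$, condition (c) forces $\deg(J)=k$, so Lemma \ref{lem:uptimal-option}(i) applied to $J$ gives $J^L\le J-\sum_{i=1}^{k}\down^i+\st$; since $A+B^L\cong \langle a_n,\ldots,a_0\rangle_J-J+J^L$, subtracting $Q$ and using the identity (with $d=k+n$) bounds the difference above by $\sum_{i=k+1}^{k+n}\down^i\le 0$. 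This exhausts all Left options, proving both (i) and (ii).

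The one genuinely laborious step is the coefficient bookkeeping behind the displayed identity, which has to be carried out uniformly over the three shapes of (\ref{eq:xhzxoe0bbtgq}) while tracking how the index offsets in the three $\down$-power blocks (the $\down^{i+1}$, $\down^{i}$, and $\down^{k+i}$ summations) interact with shortening the sequence from length $n+1$ to length $p+1$. Everything else is routine: verifying that $A$ has the exact syntactic form and degree demanded by Lemma \ref{lem:uptimal-option}, and disposing of the degenerate subcases $n=0$ and $a_p-1=0$, where $Q$ collapses to $\st$ or to $J$ in agreement with (\ref{eq:rgour9uavjr2}) and (\ref{eq:wsptsdy7tlwx}).
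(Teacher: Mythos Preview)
Your approach is essentially the paper's: both bound every Left option of $\langle a_n,\ldots,a_0\rangle_J$ by $\langle a_n,\ldots,a_0\rangle_J - \sum_{i=1}^d\down^i + \st$ via Lemma~\ref{lem:uptimal-option}(i), and then verify by direct coefficient computation in each of the three cases of (\ref{eq:xhzxoe0bbtgq}) that this bound equals the target $Q$. The one refinement is your decomposition $\langle a_n,\ldots,a_0\rangle_J \cong A+B$ with the side summand $B$ handled separately. This extra care actually matters in the $J\cong 0$ case, where the literal game carries the summand $-\down\cong\up$ and so is not syntactically of the shape $c\st+\sum b_i\down^i$ that Lemma~\ref{lem:uptimal-option} demands; the paper applies the lemma anyway, tacitly absorbing $-\down$ into the $\down^1$ coefficient, whereas you make the missing check for the Left option through $\up$ explicit (and your bound $2\down+\st+\sum_{i\ge2}\down^i<0$ is exactly what is needed). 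In the $J<0$ case your split is harmless but unnecessary: since $J\cong c\st+\sum_{i=1}^k b_i\down^i$ already has the right form, the entire sum $\langle a_n,\ldots,a_0\rangle_J$ is literally $\cong$ a sum of $\st$'s and $\down^i$'s, so Lemma~\ref{lem:uptimal-option} covers all Left options at once, including those through $J$.
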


\begin{lemma}
\label{lem:q7g997f5n2b0}
The following statements (i) and (ii) hold.
\begin{enumerate}[(i)]
\item If $a_n = a_{n-1} = \cdots = a_{p+1} = 0, a_p \geq 1$ for an integer $0 \leq p \leq n$, then 
$\langle a_p-1, a_{p-1}, a_{p-2}, \ldots, a_0 \rangle_{J} - \langle \underbrace{0, \ldots, 0}_{n-p}, a_p, a_{p-1}, \ldots, a_0 \rangle_{J} \parallel 0$.
\item $J - \langle \underbrace{0, \ldots, 0}_{n+1} \rangle_{J} \parallel 0$.
\end{enumerate}
\end{lemma}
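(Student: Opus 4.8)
The plan is to compute both differences directly from the defining formula (\ref{eq:xhzxoe0bbtgq}) and to observe that, in every admissible case, the result simplifies to $\st + \sum_{i=1}^{N}\up^i$ for some integer $N \geq 1$. Such a game is confused with $0$: since $\st + \st = 0$ (Proposition \ref{prop:star}) we have $-\st = \st$, so $\st + \sum_{i=1}^{N}\up^i = \sum_{i=1}^{N}\up^i - \st$, and $\sum_{i=1}^{N}\up^i \parallel \st$ by Proposition \ref{prop:uptimal-star}, whence $\sum_{i=1}^{N}\up^i - \st \parallel 0$. Thus, once each difference is reduced to this shape, both (i) and (ii) follow at once, and all that remains is the bookkeeping of coefficients.

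For (ii) I would first record that the case $J \cong 0$ cannot occur, since condition (a) of Theorem \ref{thm:int-star} forces $a_0 \geq 1$, contradicting $a_0 = a_1 = \cdots = a_n = 0$. If $J \cong \st$ (so $n \geq 1$ by (b)), substituting all $a_i = 0$ into (\ref{eq:xhzxoe0bbtgq}) gives $\langle 0, \ldots, 0 \rangle_{\st} = \sum_{i=1}^{n}\down^i$, while if $J < 0$ (so $k \geq 1$ by (c)) it gives $\langle 0, \ldots, 0 \rangle_{J} = J + \st + \sum_{i=1}^{k+n}\down^i$. Subtracting and using $-\down^i = \up^i$ yields $J - \langle 0, \ldots, 0 \rangle_J = \st + \sum_{i=1}^{N}\up^i$ with $N = n$, resp.\ $N = k+n$, each $\geq 1$, so the first paragraph applies.

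For (i) I would treat the three cases $J \cong 0$, $J \cong \st$, $J < 0$ separately. In each, rewrite $\langle a_n, \ldots, a_0 \rangle_J = \langle \underbrace{0, \ldots, 0}_{n-p}, a_p, a_{p-1}, \ldots, a_0 \rangle_J$ using $a_{p+1} = \cdots = a_n = 0$, expand both this and $\langle a_p - 1, a_{p-1}, \ldots, a_0 \rangle_J$ via (\ref{eq:xhzxoe0bbtgq}), and subtract. The one elementary identity needed is that, writing $a'_p \defeq{=} a_p - 1$ and $a'_i \defeq{=} a_i$ for $i < p$, one has $1 + \sum_{j=i}^{p} a'_j = \sum_{j=i}^{p} a_j$ for every $0 \leq i \leq p$; from this, every coefficient in the smaller bracket of the form ``$1$ plus a partial sum of the $a_i$'' falls short of the corresponding coefficient of the larger bracket by exactly $1$ (and the $J$-summand cancels when $J < 0$). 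Hence each coefficient of $\st$ and of every $\down^i$ in the difference equals $-1$, so $\langle a_p - 1, a_{p-1}, \ldots, a_0 \rangle_J - \langle a_n, \ldots, a_0 \rangle_J = \st + \sum_{i=1}^{N}\up^i$ with $N = n+1$, $N = n$, or $N = k+n$ respectively, each $\geq 1$.

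The only delicate point is organizational rather than conceptual: when $J < 0$ the difference a priori involves three distinct bands of down-$i$th terms --- those with $1 \leq i \leq k$ (from the $\sum_{i=1}^{k}(1+\sum a_j)\down^i$ block), those with $k+1 \leq i \leq k+p$, and those with $k+p+1 \leq i \leq k+n$ --- and one must verify that the computed coefficient is $-1$ in each band and that the parity convention $n\st = (n \bmod 2)\st$ leaves a single $\st$. No ingredient beyond Proposition \ref{prop:uptimal-star} enters.
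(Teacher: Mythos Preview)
Your proposal is correct and follows essentially the same approach as the paper: both proceed case by case on the three admissible forms of $J$, expand via (\ref{eq:xhzxoe0bbtgq}), exploit the vanishing of $a_{p+1},\ldots,a_n$, and reduce the difference in every case to $\st + \sum_{i=1}^{N}\up^i$ before invoking Proposition~\ref{prop:uptimal-star}. Your compact handling via the identity $1+\sum_{j=i}^{p} a'_j = \sum_{j=i}^{p} a_j$ is exactly what the paper does line by line, and your computed values of $N$ ($n+1$, $n$, $k+n$ for (i) and $n$, $k+n$ for (ii)) match the paper's.
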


\begin{lemma}
\label{lem:e1xr1ufi5spj}
If $n \geq 1$, then 
$\langle a_{n-1}, \ldots, a_0\rangle_{J}
= \langle 0, a_{n-1}, \ldots, a_0\rangle_{J} - \down^{d}$, where $d \defeq= \deg(\langle a_n, a_{n-1}, \ldots, a_0\rangle_{J} )$.
\end{lemma}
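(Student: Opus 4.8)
The plan is to establish the identity by a direct computation from the definition (\ref{eq:xhzxoe0bbtgq}) of the notation $\langle\cdot\rangle_J$, comparing the coefficients of $\st$ and of each $\down^i$ on the two sides. Recall that by Proposition \ref{prop:uptimal-unique} two games of the form $c\st+\sum_i b_i\down^i$ are equal exactly when their coefficient data agree (with the $\st$-coefficient read modulo $2$), and by Proposition \ref{prop:negation} their difference is computed coefficient by coefficient, so the whole argument takes place inside this combinatorially explicit subset of $\Short$. Under the standing assumptions of this section, $J$ is one of $0$, $\st$, or a game with $J<0$ of the shape (\ref{eq:dnqwaieisfhc}); accordingly I would split into three cases mirroring the three branches of (\ref{eq:xhzxoe0bbtgq}), noting that $\langle a_{n-1},\ldots,a_0\rangle_J$, $\langle 0,a_{n-1},\ldots,a_0\rangle_J$, and $\langle a_n,\ldots,a_0\rangle_J$ are all evaluated through the \emph{same} branch, the one selected by $J$.

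The first step is to pin down $d=\deg(\langle a_n,a_{n-1},\ldots,a_0\rangle_J)$ from (\ref{eq:xhzxoe0bbtgq}). When $J\cong 0$, the $\down$-term of largest index occurring with positive coefficient is $\down^{n+1}$, whose coefficient $1+a_n\ge 1$ is untouched by the $-\down$ correction (that correction only lowers the $\down^1$-coefficient, to $\sum_{j=0}^n a_j\ge 0$), so $d=n+1$; when $J\cong\st$ the largest is $\down^n$, so $d=n$; and when $J<0$, writing $J\cong c\st+\sum_{i=1}^k b_i\down^i$, the largest is $\down^{k+n}$, so $d=k+n$. In each case $d\ge 1$ by (\ref{lem:seq-deg}), and the game is in the form required by Definition \ref{def:uptimal-deg}. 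Next I would expand $\langle 0,a_{n-1},\ldots,a_0\rangle_J$ by substituting $a_n=0$ into the length-$(n+1)$ instance of (\ref{eq:xhzxoe0bbtgq}), and $\langle a_{n-1},\ldots,a_0\rangle_J$ by using (\ref{eq:xhzxoe0bbtgq}) with top index $n-1$. Setting $a_n=0$ makes $\sum_{j=i}^n a_j=\sum_{j=i}^{n-1}a_j$ for all $i\le n-1$, so these two games share the same $\st$-coefficient and the same $\down^i$-coefficient for every $i<d$, and $\langle 0,a_{n-1},\ldots,a_0\rangle_J$ carries exactly one additional term, namely $(1+a_n)\down^d=\down^d$; hence $\langle 0,a_{n-1},\ldots,a_0\rangle_J-\langle a_{n-1},\ldots,a_0\rangle_J=\down^d$, which is the asserted identity.

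The one point that needs separate handling is the boundary case $n=1$, where $\langle a_{n-1},\ldots,a_0\rangle_J$ reduces to $\langle a_0\rangle_J$ and (\ref{eq:xhzxoe0bbtgq}) must be read with empty sums: in the $J\cong\st$ branch condition (b) also forces $a_0=0$, giving $\langle 0\rangle_\st=0$, which I would check directly against $\langle 0,0\rangle_\st-\down^1=\down-\down=0$, while the $J\cong 0$ and $J<0$ branches are treated the same way with their respective base formulas. I do not foresee a substantive obstacle here: the proof is essentially a bookkeeping verification, and the only part demanding genuine care is identifying the value of $d$ in each of the three branches, since the identity only becomes meaningful once $d$ has been read off from (\ref{eq:xhzxoe0bbtgq}).
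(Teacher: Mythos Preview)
Your proposal is correct and follows essentially the same approach as the paper's proof: both split into the three cases $J\cong 0$, $J\cong\st$, $J<0$ and verify the identity by a direct computation from the definition (\ref{eq:xhzxoe0bbtgq}), observing that the length-$(n{+}1)$ expression with leading entry $0$ differs from the length-$n$ expression only by the single extra term $\down^d$. Your explicit identification of $d$ in each case ($n{+}1$, $n$, $k{+}n$ respectively) is exactly what the paper uses implicitly in the final step of each case; your separate treatment of $n=1$ is harmless but not needed, as the general formulas already cover it with empty sums.
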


Now, we move on to the proof of Theorem \ref{thm:int-star}.
We prove
\begin{equation*}
G \defeq{\cong} \left( [a_n, a_{n-1}, \ldots, a_0] \seq J \right) - \langle a_n, a_{n-1}, \ldots, a_0\rangle_{J} = 0
\end{equation*}
by induction on $n + \sum_{i = 0}^n a_i$.
More specifically, we prove $G \geq 0$ and $G \leq 0$ in Subsections \ref{subsec:proof-pos} and \ref{subsec:proof-neg} assuming
that the assertion holds for any $(n', a'_0, a'_1, \ldots, a'_{n'}, J')$ such that $n' + \sum_{i = 0}^{n'} a'_i < n + \sum_{i = 0}^n a_i$.

%%%%%%%%%%%%%%%%%%%%%%%%%%%%%%%%%%
\subsection{Proof of $G \geq 0$}
\label{subsec:proof-pos}
%%%%%%%%%%%%%%%%%%%%%%%%%%%%%%%%%%

To prove $G \geq 0$, we show that Left wins $G$ playing second, that is, Left wins any Right option of $G$ playing first.
In other words, we show that for any $G^R \in G^{\mathcal{R}}$, we have $G^R \not \leq 0$.
We consider the following two cases separately: the case where Right plays on $ [a_n, a_{n-1}, \ldots, a_0] \seq J$ next and
the case where Right plays on $-\langle a_n, a_{n-1}, \ldots, a_0\rangle_{J}$ next.

\subsubsection{Right plays on $[a_n, a_{n-1}, \ldots, a_0] \seq J $}

We show that the Right option $G^R$ corresponding to the Right's move on $[a_n, a_{n-1}, \ldots, a_0] \seq J$ satisfies $G^R \not\leq 0$ dividing into the following two cases:
the case $n \geq 1$ and the case $n = 0$.

\begin{itemize}
\item The case $n \geq 1$: We have
\begin{align*}
G^R \cong \left( [a_{n-1}, \ldots, a_0] \seq J \right) - \langle a_n, a_{n-1}, \ldots, a_0 \rangle_{J}
\eqlab{A}{=} \langle a_{n-1}, \ldots, a_0\rangle_{J} - \langle a_n, a_{n-1}, \ldots, a_0 \rangle_{J}
\eqlab{B}{\not\leq} 0,
\end{align*}
where (A) follows from (\ref{eq:wsptsdy7tlwx}) and $a_0 = 0$ of Theorem \ref{thm:int-star} (b) for the case $(n, J) = (1, \st)$,\footnote{In the case $(n, J) = (1, \st)$, we cannot apply the induction hypothesis to $[a_{n-1}, \ldots, a_0] \seq J$ because none of the conditions (a)--(c) of Theorem \ref{thm:int-star} is satisfied.}
and from the induction hypothesis for the other case,
and (B) follows from Lemma \ref{lem:ub9enb4w0s6a} (i).

\item The case $n = 0$: Then since $[a_n, a_{n-1}, \ldots, a_0] \seq J  \cong (a_0 \seq \st) \seq J$, we have
\begin{align*}
G^R \cong J - \langle a_0\rangle_{J} \eqlab{A}{\not\leq} 0,
\end{align*}
where
(A) follows from Lemma \ref{lem:ub9enb4w0s6a} (ii).
\end{itemize}

\subsubsection{Right plays on $-\langle a_n, a_{n-1}, \ldots, a_0 \rangle_{J}$}

A Right's move on $-\langle a_n, a_{n-1}, \ldots, a_0 \rangle_{J}$ results in
\begin{equation*}
G^R \cong \left( [a_n, a_{n-1}, \ldots, a_0] \seq J \right) - \langle a_n, a_{n-1}, \ldots, a_0 \rangle_{J}^L
\end{equation*}
for some Left option $\langle a_n, a_{n-1}, \ldots, a_0 \rangle_{J}^L$ of $\langle a_n, a_{n-1}, \ldots, a_0 \rangle_{J}$.
We show that $G^R$ has a Left option $G^{RL}$ such that $G^{RL} \geq 0$ for the following two cases separately:
the case where $a_n = a_{n-1} = \cdots = a_{p+1} = 0, a_p \geq 1$ for some $0 \leq p \leq n$ and
the case $a_n = a_{n-1} = \cdots = a_0 = 0$.

\begin{itemize}
\item The case where $a_n = a_{n-1} = \cdots = a_{p+1} = 0, a_p \geq 1$ for some $0 \leq p \leq n$:
The game $G^R$ has a Left option
\begin{align*}
G^{RL} 
&\defeq{\cong} \left( [a_p-1, a_{p-1}, a_{p-2}, \ldots, a_0] \seq J \right) - \langle a_n, a_{n-1}, \ldots, a_0 \rangle_{J}^L\\
&\eqlab{A}{\geq} \left( [a_p-1, a_{p-1}, a_{p-2}, \ldots, a_0] \seq J \right) - \langle a_p-1, a_{p-1}, a_{p-2}, \ldots, a_0 \rangle_{J}\\
&\eqlab{B}{=} \langle a_p-1, a_{p-1}, a_{p-2}, \ldots, a_0\rangle_{J} - \langle a_p-1, a_{p-1}, a_{p-2}, \ldots, a_0 \rangle_{J}\\
&= 0,
\end{align*}
where 
(A) follows from Lemma \ref{lem:3rp1dfrhr0nc} (i),
and (B) follows from (\ref{eq:rgour9uavjr2}) for the case $(p, a_p, J) = (0, 1, 0)$\footnote{In the case $(p, a_p, J) = (0, 1, 0)$, we cannot apply the induction hypothesis to $[a_p-1, a_{p-1}, a_{p-2}, \ldots, a_0] \seq J$ because none of the conditions (a)--(c) of Theorem \ref{thm:int-star} is satisfied.}
and the induction hypothesis for the other case.

\item The case $a_n = a_{n-1} = \cdots = a_0 = 0$:
Then since $[a_n, a_{n-1}, \ldots, a_0] \seq J  \cong (-1) \seq (-1) \seq \cdots \seq (-1) \seq \st \seq J$,
the game $G^R$ has a Left option
\begin{align*}
G^{RL} \defeq{\cong} J - \langle a_n, a_{n-1}, \ldots, a_0 \rangle_{J}^L
\eqlab{A}\geq J - J = 0,
\end{align*}
where
(A) follows from Lemma \ref{lem:3rp1dfrhr0nc} (ii).
\end{itemize}

%%%%%%%%%%%%%%%%%%%%%%%%%%%%%%%%%%
\subsection{Proof of $G \leq 0$}
\label{subsec:proof-neg}
%%%%%%%%%%%%%%%%%%%%%%%%%%%%%%%%%%

Let $d \defeq{=} \deg(\langle a_n, a_{n-1}, \ldots, a_0 \rangle_{J})$, and define
$(q, p_1, p_2, \ldots, p_d)$ so that
\begin{equation}
\label{eq:d34flbi8m2te}
\langle a_n, a_{n-1}, \ldots, a_0 \rangle_{J} \cong q \st + \sum_{i = 1}^{d} p_i \down^i.
\end{equation}
To prove $G \leq 0$, it suffices to prove the following Proposition \ref{prop:proof-neg}.
\begin{proposition}
\label{prop:proof-neg}
Let $(q', p'_1, \ldots, p'_d)$ be an arbitrary sequence of non-negative integers such that
\begin{eqnarray}
&&H' \defeq{\cong} q' \st + \sum_{i = 1}^{d} p'_i \down^i \geq \langle a_n, a_{n-1}, \ldots, a_0 \rangle_{J}, \label{eq:4zvhmrup5k1l}\\
&&p_i \geq p'_i \quad\text{for any}\,\,i = 1, 2, \ldots, d, \label{eq:lpwhcjc3w1re}\\
&&p'_1 \geq p'_2 \geq \cdots \geq p'_{d}. \label{eq:sl195b7g7f7t}
\end{eqnarray}
Then we have
$G' \defeq{\cong} \left( [a_n, a_{n-1}, \ldots, a_0] \seq J \right) - H' \leq 0$.
\end{proposition}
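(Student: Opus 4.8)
The plan is to prove Proposition \ref{prop:proof-neg} by induction on $n + \sum_{i=0}^n a_i$, reusing the same induction that drives the proof of Theorem \ref{thm:int-star}; in fact Proposition \ref{prop:proof-neg} with $(q', p'_1, \ldots, p'_d) = (q, p_1, \ldots, p_d)$ is exactly the statement $G \le 0$ we want, so it suffices to carry the stronger parametrized inequality through the recursion. To show $G' \le 0$ I would show that Left, moving first in $G' \cong ([a_n, \ldots, a_0] \seq J) - H'$, loses; equivalently, that every Left option $G'^L$ of $G'$ satisfies $G'^L \not\ge 0$, i.e. admits a Right response to something $\le 0$. Left's first move is either on the component $[a_n, \ldots, a_0]\seq J$ or on $-H'$, so there are two families of Left options to handle.

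First I would handle Left's move on $[a_n, \ldots, a_0]\seq J$. Unwinding Definition \ref{def:block} and the sequential-compound rule (\ref{eq:r3xyoug24857}), a Left option of $[a_n,\ldots,a_0]\seq J$ is either $[a_n-1, a_{n-1}, \ldots, a_0]\seq J$ (when $a_n \ge 1$), or — when $a_n = \cdots = a_{p+1} = 0$ and $a_p \ge 1$ — something got from the block of $-1$'s which, using $1 \seq x \cong x+1$ and the Number Translation / Number Avoidance machinery, should reduce to an expression governed by $\langle a_p - 1, a_{p-1}, \ldots, a_0\rangle_J$ plus a controlled down-correction as in Lemma \ref{lem:e1xr1ufi5spj}; or, when all $a_i = 0$, a move into $J^{\mathcal L}$. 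In each case the induction hypothesis (applied to a strictly smaller index, with an appropriately modified $(q', p'_i)$ that still satisfies (\ref{eq:4zvhmrup5k1l})--(\ref{eq:sl195b7g7f7t})) gives that the resulting position is $\le 0$, so it furnishes the required Right refutation; the edge cases $(n,J)=(1,\st)$ and $(p,a_p,J)=(0,1,0)$ are dispatched directly by (\ref{eq:rgour9uavjr2})--(\ref{eq:wsptsdy7tlwx}) as in Subsection \ref{subsec:proof-pos}.

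Next I would handle Left's move on $-H'$, i.e. a move to $([a_n,\ldots,a_0]\seq J) - (H')^R$ where $(H')^R$ is a Right option of $H' = q'\st + \sum p'_i \down^i$. Here Lemma \ref{lem:uptimal-option}(ii) is the workhorse: because of the monotonicity (\ref{eq:sl195b7g7f7t}), every Right option of $H'$ is either $H' + \st$ or is $\ge H' - \down^{d'}$ where $d' = \deg(H')$; combined with (\ref{eq:4zvhmrup5k1l}) and Lemma \ref{lem:seq-neg} (which gives $[a_n,\ldots,a_0]\seq J < 0$, so in particular this sequential compound is dicotic-infinitesimal and comparisons against $H'$-type games are governed purely by the uptimal coefficients), I can bound the resulting position and exhibit a Right move making it $\le 0$ — this is the place where the hypothesis $p_i \ge p'_i$ is used to keep the corrected tuple admissible for the induction. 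I expect the main obstacle to be the bookkeeping in this second family: verifying that after Left's move on $-H'$ the "corrected" tuple still satisfies all three constraints (\ref{eq:4zvhmrup5k1l})--(\ref{eq:sl195b7g7f7t}) simultaneously (the monotonicity (\ref{eq:sl195b7g7f7t}) is the delicate one, since a Right move can decrease a single coefficient and threaten to break the descending chain), and that the degree $d$ is preserved or the discrepancy is absorbed by a $\down^{d}$-term exactly as in Lemma \ref{lem:e1xr1ufi5spj}. The star-parity cases ($q'$ even vs. odd) will need to be tracked separately but should be routine given Proposition \ref{prop:star}.
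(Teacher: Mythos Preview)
Your overall shape---show Left loses moving first in $G'$, split by which component Left touches, and use Lemma \ref{lem:uptimal-option}(ii) for the $-H'$ side---matches the paper. But your induction variable is wrong, and this is not cosmetic. You propose to induct on $n + \sum_i a_i$; the paper instead proves Proposition \ref{prop:proof-neg} by induction on $\sum_{i=1}^d p'_i$ (with the ambient Theorem \ref{thm:int-star} induction on $n+\sum a_i$ still available in the background). The reason is the second family. When Left moves on $-H'$ and one lands in the case $H'^R = H' + \st$, or in $H'^R \ge H' - \down^{d'}$ with $p_d \ge 2$, the paper's Right reply plays \emph{again on the $-H'$ component}, reaching $([a_n,\ldots,a_0]\seq J) - H''$ with the \emph{same} sequential compound. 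Here $n+\sum a_i$ has not decreased at all, so your induction hypothesis is unavailable; what has decreased is $\sum_i p''_i < \sum_i p'_i$, and that is exactly what the paper inducts on (Lemmas \ref{lem:sirsnip1kywb} and \ref{lem:qdnup6wlyl8k} verify that the new tuple still satisfies (\ref{eq:4zvhmrup5k1l})--(\ref{eq:sl195b7g7f7t})). Only in the subcase $p_d = 1$ does Right respond on the sequential compound, and there the outer induction on $n+\sum a_i$ together with Lemma \ref{lem:e1xr1ufi5spj} is what closes the argument.

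Your handling of the first family is also off. After Left's move on the sequential compound to $([a_p-1,a_{p-1},\ldots,a_0]\seq J) - H'$, the paper does not look for a Right reply or pass to a modified $(q', p'_i)$: it uses (\ref{eq:4zvhmrup5k1l}) to replace $-H'$ by $-\langle a_n,\ldots,a_0\rangle_J$, invokes the outer Theorem \ref{thm:int-star} hypothesis to rewrite $[a_p-1,\ldots,a_0]\seq J$ as $\langle a_p-1,\ldots,a_0\rangle_J$, and then computes directly (Lemma \ref{lem:q7g997f5n2b0}) that the difference is $\parallel 0$, hence $\not\ge 0$. No recursion into Proposition \ref{prop:proof-neg} is needed here. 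Finally, the parity of $q'$ never needs to be tracked separately.
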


Applying Proposition \ref{prop:proof-neg} with $(q', p'_1, \ldots, p'_d) \defeq{=} (q, p_1, p_2, \ldots, p_d)$, we obtain the desired result:
\begin{eqnarray*}
G &\cong& \left( [a_n, a_{n-1}, \ldots, a_0] \seq J \right) - \langle a_n, a_{n-1}, \ldots, a_0\rangle_{J} \\
&\cong& \left( [a_n, a_{n-1}, \ldots, a_0] \seq J \right) - \left(q \st + \sum_{i = 1}^{d} p_i \down^i\right) \\
&\cong& \left( [a_n, a_{n-1}, \ldots, a_0] \seq J \right) - \left(q' \st + \sum_{i = 1}^{d} p'_i \down^i\right) \\
&\cong& \left( [a_n, a_{n-1}, \ldots, a_0] \seq J \right) - H' \\
&\eqlab{A}\leq& 0,
\end{eqnarray*}
where
(A) follows from Proposition \ref{prop:proof-neg} because
it is easily seen that $(q', p'_1, \ldots, p'_d) = (q, p_1, p_2, \ldots, p_d)$ satisfies (\ref{eq:4zvhmrup5k1l}) and (\ref{eq:lpwhcjc3w1re}),
and the assumption (\ref{eq:sl195b7g7f7t}) is seen from (\ref{eq:3dj9mrwnisxe}) and (\ref{eq:xhzxoe0bbtgq}).

We prove Proposition \ref{prop:proof-neg} by induction on $\sum_{i = 1}^d p'_i$.
We fix a given $(q', p'_1, p'_2, \ldots, p'_d)$ satisfying (\ref{eq:4zvhmrup5k1l})--(\ref{eq:sl195b7g7f7t}).
Because $G' \leq 0$ is shown by Lemma \ref{lem:seq-neg} in the case $H' \cong 0$, we assume $H' \not\cong 0$ and 
show that Right wins any Left option $G'^L$ of $G'$ playing first for the following two cases separately:
the case where Left plays on $[a_n, a_{n-1}, \ldots, a_0] \seq J$ next and the case where Left plays on $-H'$ next.

\subsubsection{Left plays on $[a_n, a_{n-1}, \ldots, a_0] \seq J $}

We show that the Left option $G'^L$ corresponding to the Left's move on $[a_n, a_{n-1}, \ldots, a_0] \seq J$ satisfies $G'^L \not\geq 0$
dividing into the following two cases: the case where $a_n = a_{n-1} = \cdots = a_{p+1} = 0, a_p \geq 1$ for some $0 \leq p \leq n$ and the case $a_n = a_{n-1} = \cdots = a_0 = 0$.

\begin{itemize}
\item The case where $a_n = a_{n-1} = \cdots = a_{p+1} = 0, a_p \geq 1$ for some $0 \leq p \leq n$:
We have
\begin{eqnarray*}
G'^L
&\cong& \left( [a_p-1, a_{p-1}, a_{p-2}, \ldots, a_0] \seq J \right) - H'\\
&\eqlab{A}\leq& \left( [a_p-1, a_{p-1}, a_{p-2}, \ldots, a_0] \seq J \right) - \langle a_n, a_{n-1}, \ldots, a_0 \rangle_{J}\\
&\eqlab{B}=& \langle a_p-1, a_{p-1}, a_{p-2}, \ldots, a_0 \rangle_{J} - \langle a_n, a_{n-1}, \ldots, a_0 \rangle_{J}\\
&\eqlab{C}=& \langle a_p-1, a_{p-1}, a_{p-2}, \ldots, a_0 \rangle_{J} - \langle \underbrace{0, \ldots, 0}_{n-p}, a_p, a_{p-1}, \ldots, a_0 \rangle_{J}\\[-2em]
&\eqlab{D}{\not\geq}& 0,
\end{eqnarray*}
where
(A) follows from (\ref{eq:4zvhmrup5k1l}),
(B) follows  from (\ref{eq:rgour9uavjr2}) for the case $(p, a_p, J) = (0, 1, 0)$ and the induction hypothesis for the other case,
(C) follows from $a_n = a_{n-1} = \cdots = a_{p+1} = 0$,
and (D) follows from Lemma \ref{lem:q7g997f5n2b0} (i).
This shows $G'^L \not\geq 0$ as desired.

\item The case $a_n = a_{n-1} = \cdots = a_0 = 0$:
Then since $[a_n, a_{n-1}, \ldots, a_0] \seq J  \cong (-1) \seq (-1) \seq \cdots \seq (-1) \seq \st \seq J$, we have
\begin{eqnarray*}
G'^L
\cong J - H'
\eqlab{A}\leq J - \langle a_n, a_{n-1}, \ldots, a_0 \rangle_{J}
\cong J - \langle \underbrace{0, \ldots, 0}_{n+1} \rangle_{J}
\eqlab{B}{\not\geq} 0,
\end{eqnarray*}
where
(A) follows from (\ref{eq:4zvhmrup5k1l}),
and (B) follows from Lemma \ref{lem:q7g997f5n2b0} (ii).
This shows $G'^L \not\geq 0$ as desired.
\end{itemize}

\subsubsection{Left plays on $-H'$}

A Left's move on $-H'$ of $G'$ results in
\begin{equation}
\label{eq:uqbmtxjcsxfd}
G'^L \cong \left( [a_n, a_{n-1}, \ldots, a_0] \seq J \right) - H'^R
\end{equation}
for some $H'^R \in H'^{\mathcal{R}}$.

If $H' \cong \st$, then $H'^R \cong 0$ and thus
\begin{equation*}
G'^L \eqlab{A}\cong \left( [a_n, a_{n-1}, \ldots, a_0] \seq J \right) - H'^R\cong \left( [a_n, a_{n-1}, \ldots, a_0] \seq J \right) \eqlab{B}< 0,
\end{equation*}
where (A) follows from (\ref{eq:uqbmtxjcsxfd}),
and (B) follows from Lemma \ref{lem:seq-neg}.
We consider the case $H' \not\cong \st$, in which $\deg(H') \geq 1$.
Then since $H'^R \not\cong 0$ and $H'^R \cong q'' \st + \sum_{i = 1}^d p''_i \down^i$ for some sequence $(q'', p''_1, p''_2, \ldots, p''_d)$ of non-negative integers,
we have $H'^R \parallel 0$ or $H'^R < 0$ by Propositions \ref{prop:uptimal-comp} and \ref{prop:uptimal-star}.
If $H'^R \parallel 0$, then $G'^L \not\geq 0$ holds as desired because
\begin{align*}
G'^L \eqlab{A}\cong \left( [a_n, a_{n-1}, \ldots, a_0] \seq J \right) - H'^R
\eqlab{B}< - H'^R
\parallel 0,
\end{align*}
where 
(A) follows from (\ref{eq:uqbmtxjcsxfd}),
and (B) follows from Lemma \ref{lem:seq-neg}.
Thus, we may also assume $H'^R < 0$ so that $\deg(H'^R) \geq 1$.

Since $\deg(H') \geq 1$, we have $H'^R = H' + \st$ or $H'^R \geq H' - \down^{d'}$ by Lemma \ref{lem:uptimal-option} (ii), where $d' \defeq{=} \deg(H')$.
We consider the following two cases separately: the case $H'^R = H' + \st$ and the case $H'^R \geq H' - \down^{d'}$.

\begin{itemize}
\item The case $H'^R = H' + \st$: It follows $G'^L \not\geq 0$ as desired because
\begin{equation*}
G'' \defeq\cong \left( [a_n, a_{n-1}, \ldots, a_0] \seq J \right) - \left(H' + \st\right)
\end{equation*}
satisfies
\begin{equation*}
G'' = \left( [a_n, a_{n-1}, \ldots, a_0] \seq J \right) - H'^R \cong G'^L,
\end{equation*}
and has a Right option $G''^{R}$ such that
\begin{eqnarray}
G''^{R}
&\eqlab{A}=& \left( [a_n, a_{n-1}, \ldots, a_0] \seq J \right) - \left( \left(H' - \sum_{i = 1}^{d'}\down^i + \st \right) + \st \right) \nonumber\\
&=& \left( [a_n, a_{n-1}, \ldots, a_0] \seq J \right) - \left( H' - \sum_{i = 1}^{d'}\down^i \right) \nonumber\\
&\cong& \left( [a_n, a_{n-1}, \ldots, a_0] \seq J \right) - \left( q' \st + \sum_{i = 1}^{d'} p'_i \down^i - \sum_{i = 1}^{d'}\down^i \right) \nonumber\\
&=& \left( [a_n, a_{n-1}, \ldots, a_0] \seq J \right) - \left( q' \st + \sum_{i = 1}^{d'} (p'_i-1) \down^i\right) \nonumber\\
&\eqlab{B}\cong& \left( [a_n, a_{n-1}, \ldots, a_0] \seq J \right) - \left(q'' \st + \sum_{i = 1}^{d} p''_i \down^i \right) \label{eq:josfyvqn40ik}\\
&\eqlab{C}\leq& 0, \nonumber
\end{eqnarray}

where
(A) follows from (\ref{eq:9f1ueuce84aw}),
(B) follows by defining the sequence $(q'', p''_1, p''_2, \ldots, p''_d)$ of non-negative integers so that the identity holds,
and (C) follows by applying the induction hypothesis because $(q'', p''_1, p''_2, \ldots, p''_d)$ satisfies (\ref{eq:4zvhmrup5k1l})--(\ref{eq:sl195b7g7f7t}), the proof of which is
deferred to Appendix \ref{subsec:proof-sirsnip1kywb} as the proof of the following lemma.

\begin{lemma}
\label{lem:sirsnip1kywb}
The sequence $(q'', p''_1, p''_2, \ldots, p''_d)$ defined by (\ref{eq:josfyvqn40ik}) satisfies (\ref{eq:4zvhmrup5k1l})--(\ref{eq:sl195b7g7f7t}).
\end{lemma}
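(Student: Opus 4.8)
The plan is to read off the sequence $(q'', p''_1, \ldots, p''_d)$ that (\ref{eq:josfyvqn40ik}) forces and then verify (\ref{eq:4zvhmrup5k1l})--(\ref{eq:sl195b7g7f7t}) for it directly. Put $d' \defeq{=} \deg(H')$; in the case at hand $H' \not\cong \st$ and $H' \not\cong 0$, so $d' \geq 1$, and trivially $d' \leq d$ since $H'$ carries no coefficient beyond $\down^d$. From the monotonicity (\ref{eq:sl195b7g7f7t}) for $H'$ together with the definition of degree we obtain $p'_1 \geq \cdots \geq p'_{d'} \geq 1$ and $p'_i = 0$ for $d' < i \leq d$. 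Hence $q'\st + \sum_{i=1}^{d'}(p'_i-1)\down^i$ has only non-negative integer coefficients, and comparing it with the right-hand side of (\ref{eq:josfyvqn40ik}) — which merely pads it with zero coefficients for $d' < i \leq d$ — I would take $q'' \defeq{=} q'$, $p''_i \defeq{=} p'_i - 1$ for $1 \leq i \leq d'$, and $p''_i \defeq{=} 0$ for $d' < i \leq d$; each entry is a non-negative integer precisely because $p'_i \geq 1$ for all $i \leq d'$.

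Then I would check the three inequalities in turn. For (\ref{eq:lpwhcjc3w1re}): if $i \leq d'$ then $p''_i = p'_i - 1 < p'_i \leq p_i$ by (\ref{eq:lpwhcjc3w1re}) applied to $H'$; if $i > d'$ then $p''_i = 0 \leq p_i$. For (\ref{eq:sl195b7g7f7t}): for $i < d'$ the inequality $p''_i \geq p''_{i+1}$ is (\ref{eq:sl195b7g7f7t}) for $H'$ with $1$ subtracted from both sides; at $i = d'$ we have $p''_{d'} = p'_{d'} - 1 \geq 0 = p''_{d'+1}$; and for $i > d'$ both sides vanish. For (\ref{eq:4zvhmrup5k1l}): write $H'' \defeq{\cong} q''\st + \sum_{i=1}^d p''_i\down^i$; then $H''$ has the same game value as $q'\st + \sum_{i=1}^{d'}(p'_i-1)\down^i$, which in turn equals $H' + \sum_{i=1}^{d'}\up^i$ since $\down^i = -\up^i$; because each $\up^i > 0$ (Proposition \ref{prop:uptimal-comp}) the added summand is positive, so $H'' > H' \geq \langle a_n, a_{n-1}, \ldots, a_0\rangle_{J}$ by the hypothesis (\ref{eq:4zvhmrup5k1l}) for $H'$ and the transitivity of $\leq$ (Proposition \ref{prop:order}); in particular $H'' \geq \langle a_n, a_{n-1}, \ldots, a_0\rangle_{J}$.

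I do not expect a genuine obstacle here: all the game-value manipulations used ($\down^i = -\up^i$, dropping $0 \cdot \down^i$ summands, and the fact that adding a strictly positive summand preserves $>$) are immediate from the results of Section \ref{sec:preliminaries}. The only point needing a moment's care is the boundary index $i = d'$ in the monotonicity check, together with the observation that $p'_i \geq 1$ for every $i \leq d'$ — so that $p''_i = p'_i - 1$ is genuinely non-negative — which is exactly where the case hypothesis $H' \not\cong \st$ (forcing $d' \geq 1$, hence $p'_{d'} \geq 1$) and the monotonicity (\ref{eq:sl195b7g7f7t}) of $H'$ come into play.
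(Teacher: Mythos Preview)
Your proposal is correct and follows essentially the same approach as the paper: you identify $(q'', p''_1, \ldots, p''_d)$ as $q'' = q'$, $p''_i = p'_i - 1$ for $i \leq d'$, $p''_i = 0$ for $i > d'$, then verify (\ref{eq:4zvhmrup5k1l}) via $H'' = H' + \sum_{i=1}^{d'}\up^i > H'$, and (\ref{eq:lpwhcjc3w1re})--(\ref{eq:sl195b7g7f7t}) by direct comparison, exactly as the paper does in Appendix \ref{subsec:proof-sirsnip1kywb}. Your explicit remark that $p'_i = 0$ for $i > d'$ (needed to see that the padding in (\ref{eq:josfyvqn40ik}) really gives $p''_i = 0$ there) is a point the paper leaves implicit.
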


\item The case $H'^R \geq H' - \down^{d'}$: 
It suffices to show that
\begin{equation}
\label{eq:ae0kzz634oy7}
G'' \defeq{\cong} \left( [0, a_{n-1}, \ldots, a_0] \seq J \right) - \left( H' - \down^{d'} \right) \not\geq 0
\end{equation}
by showing that $G''$ has a Right option $G''^R$ such that $G''^R \leq 0$
because the desired result $G'^L \not\geq 0$ is implied as
\begin{equation*}
G'^L \eqlab{A}\cong \left( [0, a_{n-1}, \ldots, a_0] \seq J \right) - H'^R
\leq \left( [0, a_{n-1}, \ldots, a_0] \seq J \right) - \left( H' - \down^{d'} \right)
\eqlab{B}\cong G'' \not\geq 0,
\end{equation*}
where
(A) follows from (\ref{eq:uqbmtxjcsxfd}),
and (B) follows from (\ref{eq:ae0kzz634oy7}).
We show (\ref{eq:ae0kzz634oy7}) dividing into the following two cases: the case $p_d = 1$ and the case $p_d \geq 2$.

\begin{itemize}
\item The case $p_d = 1$: This case is possible only if
\begin{equation}
\label{eq:9wf8kgfgeiy3}
a_n = 0
\end{equation}
by (\ref{eq:xhzxoe0bbtgq}).
We see $n \geq 1$ for the following three cases separately: the case $J \cong 0, a_0 \geq 1$, the case $J \cong \st, n \geq 1, a_0 = 0$, and the case $J < 0$.
\begin{itemize}
\item The case $J \cong 0, a_0 \geq 1$: By (\ref{eq:9wf8kgfgeiy3}) and $a_0 \geq 1$.
\item The case $J \cong \st, n \geq 1, a_0 = 0$: Directly from the assumption $n \geq 1$.
\item The case $J < 0$: If we assume $n = 0$, then
\begin{equation*}
q \st + \sum_{i = 1}^{d} p_i \down^i
\cong \langle a_n, a_{n-1}, \ldots, a_0 \rangle_J 
\cong \langle a_0 \rangle_J
\eqlab{A}\cong \langle 0 \rangle_J
\eqlab{B}\cong J + \st + \sum_{i = 1}^k \down^i
\eqlab{C}\cong (c+1)\st + \sum_{i = 1}^k (b_i+1) \down^i,
\end{equation*}
where 
(A) follows from (\ref{eq:9wf8kgfgeiy3}),
(B) follows from the third case of (\ref{eq:xhzxoe0bbtgq}),
and (C) follows from (\ref{eq:dnqwaieisfhc}).
Comparing both side, we obtain
$d = \deg(\langle a_n, a_{n-1}, \ldots, a_0 \rangle_J )= k$ and $p_d = b_k + 1 \geq 1+1 \geq 2$,
which conflict with $p_d = 1$.
\end{itemize}

Since $n \geq 1$ as shown above, $G''$ has a Right option
\begin{eqnarray*}
\left( [a_{n-1}, \ldots, a_0] \seq J \right) - \left( H' - \down^{d'} \right)
&\eqlab{A}{=}& \langle a_{n-1}, \ldots, a_0\rangle_{J} - H' + \down^{d'}\\
&\eqlab{B}=& \left(\langle 0, a_{n-1}, \ldots, a_0\rangle_{J} - \down^{d} \right) - H' + \down^{d'}\\
&\eqlab{C}=& \langle a_n, a_{n-1}, \ldots, a_0\rangle_{J} - H' - \down^{d} + \down^{d'}\\
&\eqlab{D}{\leq}& H' - H' + \down^{d} - \down^{d'}\\
&=& \down^{d} - \down^{d'}\\
&\eqlab{E}{\leq}& 0,
\end{eqnarray*}
where
(A) follows from (\ref{eq:wsptsdy7tlwx}) for the case $(n, J) = (1, \st)$ and the induction hypothesis for the other case,
(B) follows from Lemma \ref{lem:e1xr1ufi5spj},
(C) follows from (\ref{eq:9wf8kgfgeiy3}),
(D) follows from (\ref{eq:4zvhmrup5k1l}),
and (E) follows from Proposition \ref{prop:uptimal-comp} and $d' \leq d$.
This shows $G'' \not\geq 0$ as desired.

\item The case $p_d \geq 2$:
By $\deg(H') \geq 1$, the game $G''$
has a Right option
\begin{eqnarray}
G''^{R} &=& \left( [a_n, a_{n-1}, \ldots, a_0] \seq J \right) - \left(H' - \down^{d'} - \sum_{i = 1}^{d''} \down^i + \st\right) \nonumber\\
&\eqlab{A}\cong& \left( [a_n, a_{n-1}, \ldots, a_0] \seq J \right) - \left(q'' \st + \sum_{i = 1}^{d} p''_i \down^i \right) \label{eq:bk22tdopkqc2}\\
&\eqlab{B}\leq& 0, \nonumber
\end{eqnarray}
where 
$d'' \defeq{=} d'$ if $p_{d'} \geq 2$, and $d'' \defeq{=} d'-1$ if $p_{d'} = 1$,
(A) follows by defining the sequence $(q'', p''_1, p''_2, \ldots, p''_d)$ of non-negative integers so that the identity holds,
and (B) follows by applying the induction hypothesis because $(q'', p''_1, p''_2, \ldots, p''_d)$ satisfies (\ref{eq:4zvhmrup5k1l})--(\ref{eq:sl195b7g7f7t}),
the proof of which is deferred to Appendix \ref{subsec:proof-qdnup6wlyl8k} as the proof of the following lemma.

\begin{lemma}
\label{lem:qdnup6wlyl8k}
The sequence $(q'', p''_1, p''_2, \ldots, p''_d)$ defined by (\ref{eq:bk22tdopkqc2}) satisfies (\ref{eq:4zvhmrup5k1l})--(\ref{eq:sl195b7g7f7t}).
\end{lemma}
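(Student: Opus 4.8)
The plan is to read explicit formulas for $q''$ and $p''_1,\ldots,p''_d$ off the defining identity (\ref{eq:bk22tdopkqc2}) and then verify (\ref{eq:4zvhmrup5k1l})--(\ref{eq:sl195b7g7f7t}) in turn. By (\ref{eq:bk22tdopkqc2}) the sequence $(q'',p''_1,\ldots,p''_d)$ is the one with $q''\st+\sum_{i=1}^{d}p''_i\down^i=H'-\down^{d'}-\sum_{i=1}^{d''}\down^i+\st$, where $H'\cong q'\st+\sum_{i=1}^{d}p'_i\down^i$, $d'=\deg(H')\ge 1$, and $p'_i=0$ for $d'<i\le d$. Since disjunctive sums are linear in the multiplicities of the $\down^i$ and $2\st=0$, equating coefficients via Proposition \ref{prop:uptimal-unique} forces $q''\equiv q'+1\pmod 2$, $p''_i=p'_i=0$ for $i>d'$, and, for $i\le d'$, that $p'_i$ be lowered by $1$ for each of the conditions $i\le d''$ and $i=d'$. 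Concretely: in the branch $d''=d'$ one gets $p''_i=p'_i-1$ for $i<d'$, $p''_{d'}=p'_{d'}-2$, and $p''_i=0$ for $i>d'$; in the branch $d''=d'-1$ one gets $p''_i=p'_i-1$ for $i\le d'$ and $p''_i=0$ for $i>d'$.

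Granting these formulas, (\ref{eq:lpwhcjc3w1re}) is immediate since $p''_i\le p'_i\le p_i$ for all $i$, by the hypothesis (\ref{eq:lpwhcjc3w1re}) for $(q',p')$. For the non-negativity of the $p''_i$ and for the monotonicity (\ref{eq:sl195b7g7f7t}) I would use $p'_{d'}\ge 1$ (because $d'=\deg(H')$) together with $p'_1\ge p'_2\ge\cdots\ge p'_{d'}$ (the hypothesis (\ref{eq:sl195b7g7f7t}) for $(q',p')$). The case distinction defining $d''$ is arranged exactly so that the coefficient at the top index $d'$ does not go negative: in the branch $d''=d'$ one uses $p'_{d'}\ge 2$, so $p''_{d'}=p'_{d'}-2\ge 0$, and in the branch $d''=d'-1$ it drops only to $p'_{d'}-1\ge 0$; for $i<d'$ it drops by exactly $1$, to $p'_i-1\ge p'_{d'}-1\ge 0$. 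Monotonicity of $(p''_i)_{i=1}^d$ is then a short staircase check: subtracting $1$ uniformly from $p'_1,\ldots,p'_{d'}$ keeps the chain weakly decreasing, the extra unit dropped at $i=d'$ in the first branch is absorbed since $p''_{d'-1}=p'_{d'-1}-1\ge p'_{d'}-1\ge p'_{d'}-2=p''_{d'}$, and the trailing zeros $p''_i=0$ $(i>d')$ continue the chain.

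The substance of the lemma is (\ref{eq:4zvhmrup5k1l}), i.e.\ that $H'':=q''\st+\sum_{i=1}^{d}p''_i\down^i$ satisfies $H''\ge\langle a_n,a_{n-1},\ldots,a_0\rangle_{J}$. Rewriting with $-\down^i=\up^i$ and $2\st=0$: in the branch $d''=d'$ one has $H''=H'+\bigl(\st+\up^{d'}+\sum_{i=1}^{d'}\up^i\bigr)$, and the bracketed game is $>0$ by Proposition \ref{prop:uptimal-star} with $k=d'$ (it yields $\sum_{i=1}^{d'}\up^i+\up^{d'}>\st$, hence $\sum_{i=1}^{d'}\up^i+\up^{d'}+\st>0$); combined with $H'\ge\langle a_n,\ldots,a_0\rangle_{J}$ this gives $H''>\langle a_n,\ldots,a_0\rangle_{J}$. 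The delicate branch is $d''=d'-1$: there $H''=H'+\bigl(\st+\sum_{i=1}^{d'}\up^i\bigr)$ and the bracketed game is only confused with $0$ (Proposition \ref{prop:uptimal-star}), so this shortcut fails and I would compare $H''$ with $\langle a_n,\ldots,a_0\rangle_{J}$ coefficient by coefficient. In this branch $\down^{d'}$ has multiplicity $1$ in $H'$, so its $\down^{d'}$ term cancels and $\deg(H'')<d$. Setting $e_i=p_i-p''_i\ge 0$, one has $e_i\ge 1$ for $i\le\deg(H'')$, and, because the present case is $p_d\ge 2$ and $(p_i)_{i=1}^d$ is weakly decreasing by (\ref{eq:3dj9mrwnisxe})--(\ref{eq:xhzxoe0bbtgq}), $e_i=p_i\ge 2$ for $\deg(H'')<i\le d$. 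If $q''\equiv q\pmod 2$ then $H''-\langle a_n,\ldots,a_0\rangle_{J}=\sum_i e_i\up^i\ge 0$ and we are done; if $q''\not\equiv q$ then $H''-\langle a_n,\ldots,a_0\rangle_{J}=\st+\sum_i e_i\up^i$, and since $e_i\ge 1$ for $i\le\deg(H'')$ and $e_{\deg(H'')+1}\ge 2$, Proposition \ref{prop:uptimal-star} with $k=\deg(H'')+1$ gives $\sum_i e_i\up^i>\st$, hence $\st+\sum_i e_i\up^i>0$. This parity bookkeeping in the $d''=d'-1$ branch — in particular the point where one must invoke $p_d\ge 2$ to push $\sum_i e_i\up^i$ strictly past $\st$ — is the step I expect to be the main obstacle; the rest is routine arithmetic with the multiplicities.
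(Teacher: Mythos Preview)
Your proof is correct and follows essentially the same route as the paper's: you extract the explicit formulas for $(q'',p''_1,\ldots,p''_d)$ in the two branches $d''=d'$ (i.e.\ $p'_{d'}\ge 2$) and $d''=d'-1$ (i.e.\ $p'_{d'}=1$), dispatch (\ref{eq:lpwhcjc3w1re}) and (\ref{eq:sl195b7g7f7t}) by the obvious staircase checks, and for (\ref{eq:4zvhmrup5k1l}) in the easy branch use $H''=H'+(\st+\sum_{i=1}^{d'}\up^i+\up^{d'})>H'\ge\langle a_n,\ldots,a_0\rangle_J$ via Proposition~\ref{prop:uptimal-star}, while in the delicate branch you invoke $p_d\ge 2$ together with the monotonicity $p_1\ge\cdots\ge p_d$ to force $\sum_i(p_i-p''_i)\up^i$ past $\st$. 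The paper does exactly this, the only cosmetic difference being that in the $p'_{d'}=1$ branch it bounds $\langle a_n,\ldots,a_0\rangle_J-H''$ from above by $(q{+}q'{+}1)\st+2\down^{d'}+\sum_{i=1}^{d'-1}\down^i<0$ in one chain of inequalities rather than splitting on the parity of $q-q''$; both arguments hinge on the same application of Proposition~\ref{prop:uptimal-star} and the same use of $p_d\ge 2$.
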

\end{itemize}
\end{itemize}

%%%%%%%%%%%%%%%%%%%%%%%%%%%%%%%
\section{Proofs of Lemmas}
%%%%%%%%%%%%%%%%%%%%%%%%%%%%%%%

%%%%%%%%%%%%%%%%%%%%%%%%%%%%%%%
\subsection{Proof of Lemma \ref{lem:uptimal-option}}
\label{subsec:proof-uptimal-option}
%%%%%%%%%%%%%%%%%%%%%%%%%%%%%%%

\begin{proof}[Proof of Lemma \ref{lem:uptimal-option}]
(Proof of (i))
Choose $G^L \in G^{\mathcal{L}}$ arbitrarily.
The Left option $G^L$ is reached from $G$ by a move on either $\st$ or $\down^{d'}$ for some $d' \leq d$.
In the former case, we have
\begin{equation*}
G^L
\cong (c-1) \cdot \st + \sum_{i = 1}^k b_i \cdot \down^i
= G + \st
\eqlab{A}\leq G - \sum_{i = 1}^d \down^i + \st,
\end{equation*}
where
(A) follows from Proposition \ref{prop:uptimal-comp} (i).
In the latter case, we have
\begin{equation*}
G^L
\eqlab{A}= G - \sum_{i = 1}^{d'} \down^i + \st
\eqlab{B}\leq G - \sum_{i = 1}^{d} \down^i + \st,
\end{equation*}
where (A) follows from (\ref{eq:9f1ueuce84aw}),
and (B) follows from $d' \leq d$ and Proposition \ref{prop:uptimal-comp} (i).

(Proof of (ii))
Choose $G^R \in G^{\mathcal{R}}$ arbitrarily.
The Right option $G^R$ is reached from $G$ by a move on either $\st$ or $\down^{d'}$ for some $d' \leq d$.
In the former case, we have $G^R = G + \st$.
In the latter case, we have
\begin{equation*}
G^R
\eqlab{A}= G - \down^{d'}
\eqlab{B}\geq  G - \down^{d},
\end{equation*}
where (A) follows from (\ref{eq:9f1ueuce84aw}),
and (B) follows from $d' \leq d$ and Proposition \ref{prop:uptimal-comp} (ii).
\end{proof}

%%%%%%%%%%%%%%%%%%%%%%%%%%%%%%%
\subsection{Proof of Lemma \ref{lem:seq}}
\label{subsec:proof-seq}
%%%%%%%%%%%%%%%%%%%%%%%%%%%%%%%

\begin{proof}[Proof of Lemma \ref{lem:seq}]
(Proof of (i))
We first show $G \seq 0 \cong G$ by induction on $\birth(G)$.
For the base case $G \cong 0$, we have
\begin{equation*}
G \seq 0 \cong 0 \seq 0 \eqlab{A}\cong 0 \cong G,
\end{equation*}
where (A) follows from (\ref{eq:r3xyoug24857}).
For the induction step for $G \not\cong 0$, we have
\begin{eqnarray*}
(G \seq 0)^{\mathcal{L}}
&\eqlab{A}{=}& \begin{cases}
G^{\mathcal{L}} \seq 0 &\,\,\text{if}\,\, G^{\mathcal{L}} \neq \emptyset,\\
0^{\mathcal{L}} &\,\,\text{if}\,\, G^{\mathcal{L}} = \emptyset
\end{cases} \\
&\eqlab{B}{=}& \begin{cases}
G^{\mathcal{L}} &\,\,\text{if}\,\, G^{\mathcal{L}} \neq \emptyset,\\
0^{\mathcal{L}} &\,\,\text{if}\,\, G^{\mathcal{L}} = \emptyset
\end{cases} \\
&=& \begin{cases}
G^{\mathcal{L}} &\,\,\text{if}\,\, G^{\mathcal{L}} \neq \emptyset,\\
\emptyset &\,\,\text{if}\,\, G^{\mathcal{L}} = \emptyset
\end{cases} \\
&=& G^{\mathcal{L}},
\end{eqnarray*}
where
(A) follows from (\ref{eq:r3xyoug24857}),
and (B) follows from the induction hypothesis.
Similarly, we obtain
$(G \seq 0)^{\mathcal{R}} = G^{\mathcal{R}}$.
This concludes $G \seq 0 \cong G$.

We next show $0 \seq G \cong G$.
We have
\begin{eqnarray*}
(0 \seq G)^{\mathcal{L}}
\eqlab{A}{=}
\begin{cases}
0^{\mathcal{L}} \seq G &\,\,\text{if}\,\, 0^{\mathcal{L}} \neq \emptyset,\\
G^{\mathcal{L}} &\,\,\text{if}\,\, 0^{\mathcal{L}} = \emptyset
\end{cases} 
\quad\eqlab{B}{=} G^{\mathcal{L}},
\end{eqnarray*}
where
(A) follows from (\ref{eq:r3xyoug24857}),
and (B) follows since $0^{\mathcal{L}} = \emptyset$.
Similarly, we obtain
$(0 \seq G)^{\mathcal{R}} = G^{\mathcal{R}}$.
This concludes $0 \seq G \cong G$.

(Proof of (ii))
We prove by induction on $\birth(G)$.
For the base case $G \cong 0$, we have
\begin{equation*}
(G \seq H) \seq J
\cong (0 \seq H) \seq J
\eqlab{A}\cong H \seq J
\eqlab{B} \cong 0 \seq (H \seq J)
\cong G \seq (H \seq J),
\end{equation*}
where (A) follows from (i) of this lemma,
and (B) follows from (i) of this lemma.
For the induction step for $G \not\cong 0$,
we have
\begin{eqnarray*}
((G \seq H)\seq J)^{\mathcal{L}}
&\eqlab{A}{=}& \begin{cases}
(G \seq H)^{\mathcal{L}} \seq J &\,\,\text{if}\,\, (G \seq H)^{\mathcal{L}} \neq \emptyset,\\
J^{\mathcal{L}} &\,\,\text{if}\,\, (G \seq H)^{\mathcal{L}} = \emptyset
\end{cases} \\
&\eqlab{B}{=}& \begin{cases}
(G^{\mathcal{L}} \seq H) \seq J &\,\,\text{if}\,\, (G \seq H)^{\mathcal{L}} \neq \emptyset, G^{\mathcal{L}} \neq \emptyset,\\
H^{\mathcal{L}} \seq J &\,\,\text{if}\,\, (G \seq H)^{\mathcal{L}} \neq \emptyset, G^{\mathcal{L}} = \emptyset,\\
J^{\mathcal{L}} &\,\,\text{if}\,\, (G \seq H)^{\mathcal{L}} = \emptyset
\end{cases} \\
&\eqlab{C}{=}& \begin{cases}
(G^{\mathcal{L}} \seq H) \seq J &\,\,\text{if}\,\, G^{\mathcal{L}} \neq \emptyset,\\
H^{\mathcal{L}} \seq J &\,\,\text{if}\,\, G^{\mathcal{L}} = \emptyset, H^{\mathcal{L}} \neq \emptyset,\\
J^{\mathcal{L}} &\,\,\text{if}\,\, G^{\mathcal{L}} = \emptyset, H^{\mathcal{L}} = \emptyset
\end{cases} \\
&\eqlab{D}{=}& \begin{cases}
G^{\mathcal{L}} \seq (H \seq J) &\,\,\text{if}\,\, G^{\mathcal{L}} \neq \emptyset,\\
H^{\mathcal{L}} \seq J &\,\,\text{if}\,\, G^{\mathcal{L}} = \emptyset, H^{\mathcal{L}} \neq \emptyset,\\
J^{\mathcal{L}} &\,\,\text{if}\,\, G^{\mathcal{L}} = \emptyset, H^{\mathcal{L}} = \emptyset
\end{cases} \\
&\eqlab{E}{=}& \begin{cases}
G^{\mathcal{L}} \seq (H \seq J) &\,\,\text{if}\,\, G^{\mathcal{L}} \neq \emptyset,\\
(H \seq J)^{\mathcal{L}} &\,\,\text{if}\,\, G^{\mathcal{L}} = \emptyset\\
\end{cases}\\
&\eqlab{F}{=}& (G \seq (H\seq J))^{\mathcal{L}},
\end{eqnarray*}
where
(A) follows from (\ref{eq:r3xyoug24857}),
(B) follows from (\ref{eq:r3xyoug24857}),
(C) follows since $(G \seq H)^{\mathcal{L}} \neq \emptyset$ is implied by $G^{\mathcal{L}} \neq \emptyset$,
and $(G \seq H)^{\mathcal{L}} = \emptyset$ is equivalent to $G^{\mathcal{L}} = H^{\mathcal{L}} = \emptyset$ by (\ref{eq:r3xyoug24857}),
(D) follows from the induction hypothesis,
(E) follows from (\ref{eq:r3xyoug24857}),
and (F) follows from (\ref{eq:r3xyoug24857}).
Similarly, we obtain
$((G \seq H)\seq J)^{\mathcal{R}} = (G \seq (H\seq J))^{\mathcal{R}}$.

(Proof of (iii))
We prove by induction on $\birth(G)$.
For the base case $G \cong 0$, we have
\begin{equation}
-(G \seq H)
\cong -(0 \seq H)
\eqlab{A}\cong -H
\eqlab{B}\cong 0 \seq (-H)
\eqlab{C}\cong (-0) \seq (-H)
\cong (-G) \seq (-H),
\end{equation}
where
(A) follows from (i) of this lemma,
(B) follows from (i) of this lemma,
and (C) follows from (\ref{eq:sppm6ebdbfgf}).
For the induction step for $G \not \cong 0$,  we have
\begin{eqnarray*}
(-(G \seq H))^{\mathcal{L}}
&\eqlab{A}{=}& -((G \seq H)^{\mathcal{R}}) \\
&\eqlab{B}{=}& \begin{cases}
-(G^{\mathcal{R}} \seq H) &\,\,\text{if}\,\,G^{\mathcal{R}} \neq \emptyset,\\
-(H^{\mathcal{R}}) &\,\,\text{if}\,\, G^{\mathcal{R}} = \emptyset\\
\end{cases} \\
&=& \begin{cases}
\left\{ -(G^{R} \seq H) : G^R \in G^{\mathcal{R}} \right\} &\,\,\text{if}\,\,G^{\mathcal{R}} \neq \emptyset,\\
-(H^{\mathcal{R}}) &\,\,\text{if}\,\, G^{\mathcal{R}} = \emptyset\\
\end{cases} \\
&\eqlab{C}=& \begin{cases}
\left\{ (-G^{R}) \seq (-H) : G^R \in G^{\mathcal{R}} \right\} &\,\,\text{if}\,\,G^{\mathcal{R}} \neq \emptyset,\\
-(H^{\mathcal{R}}) &\,\,\text{if}\,\, G^{\mathcal{R}} = \emptyset\\
\end{cases} \\
&{=}& \begin{cases}
(-(G^{\mathcal{R}})) \seq (-H) &\,\,\text{if}\,\,G^{\mathcal{R}} \neq \emptyset,\\
-(H^{\mathcal{R}})  &\,\,\text{if}\,\, G^{\mathcal{R}} = \emptyset\\
\end{cases} \\
&\eqlab{D}{=}& \begin{cases}
(-(G^{\mathcal{R}})) \seq (-H) &\,\,\text{if}\,\,G^{\mathcal{R}} \neq \emptyset,\\
(-H)^{\mathcal{L}} &\,\,\text{if}\,\, G^{\mathcal{R}} = \emptyset\\
\end{cases} \\
&=& \begin{cases}
(-(G^{\mathcal{R}})) \seq (-H) &\,\,\text{if}\,\,{-}(G^{\mathcal{R}}) \neq \emptyset,\\
(-H)^{\mathcal{L}} &\,\,\text{if}\,\, {-}(G^{\mathcal{R}}) = \emptyset\\
\end{cases} \\
&\eqlab{E}{=}& \begin{cases}
(-G)^{\mathcal{L}} \seq (-H) &\,\,\text{if}\,\,(-G)^{\mathcal{L}} \neq \emptyset,\\
(-H)^{\mathcal{L}} &\,\,\text{if}\,\, (-G)^{\mathcal{L}} = \emptyset\\
\end{cases} \\
&\eqlab{F}{=}&((-G) \seq (-H))^{\mathcal{L}},
\end{eqnarray*}
where
(A) follows from (\ref{eq:sppm6ebdbfgf}),
(B) follows from (\ref{eq:r3xyoug24857}),
(C) follows from the induction hypothesis,
(D) follows from (\ref{eq:sppm6ebdbfgf}),
(E) follows from (\ref{eq:sppm6ebdbfgf}),
and (F) follows from (\ref{eq:r3xyoug24857}).
Similarly, we obtain
$(-(G \seq H))^{\mathcal{R}} = ((-G) \seq (-H))^{\mathcal{R}}$.
\end{proof}

%%%%%%%%%%%%%%%%%%%%%%%%%%%%%%%
\subsection{Proof of Lemma \ref{lem:seq-neg}}
\label{subsec:proof-seq-neg}
%%%%%%%%%%%%%%%%%%%%%%%%%%%%%%%

 \begin{proof}[Proof of Lemma \ref{lem:seq-neg}]
  We consider the following three cases separately: the case $J \cong 0, a_0 \geq 1$, the case $J \cong \st, n \geq 1, a_0 = 0$, and the case $J < 0$.
 \begin{itemize}
\item The case $J \cong 0, a_0 \geq 1$: We have
\begin{eqnarray*}
 [a_n, a_{n-1}, \ldots, a_0] \seq J 
 &\cong&  [a_n, a_{n-1}, \ldots, a_0] \\
 &\eqlab{A}\cong& a_n \seq (-1) \seq a_{n-1} \seq (-1) \seq \cdots \seq (-1) \seq a_0 \seq \st \\
 &\eqlab{B}\cong& a_n \seq (-1) \seq a_{n-1} \seq (-1) \seq \cdots \seq (-1) \seq \llbracket a_0-1 \rrbracket \seq 1 \seq \st \\
 &\eqlab{C}<& 0,
\end{eqnarray*}
where
(A) follows from Definition \ref{def:block},
(B) follows from Theorem \ref{thm:int},
and (C) follows from $o(1 \seq \st) = \mathscr{R}$ and Lemma \ref{lem:seq-outcome}.

\item The case $J \cong \st, n \geq 1, a_0 = 0$: We have
\begin{eqnarray*}
 [a_n, a_{n-1}, \ldots, a_0] \seq J 
 &\cong&  [a_n, a_{n-1}, \ldots, a_1, 0] \seq \st\\
 &\eqlab{A}\cong& a_n \seq (-1) \seq a_{n-1} \seq (-1) \seq \cdots \seq (-1) \seq 0 \seq \st \seq \st \\
 &\cong& a_n \seq (-1) \seq a_{n-1} \seq (-1) \seq \cdots \seq (-1) \seq \st \seq \st \\
 &\eqlab{B}<& 0,
\end{eqnarray*}
where
(A) follows from Definition \ref{def:block},
and (B) follows from $o((-1) \seq \st \seq \st) = \mathscr{R}$ and Lemma \ref{lem:seq-outcome}.

\item The case $J < 0$: We obtain $o([a_n, a_{n-1}, \ldots, a_0] \seq J) = \mathscr{R}$ by $o(J) = \mathscr{R}$ and Lemma \ref{lem:seq-outcome}.
\end{itemize}
\end{proof}

%%%%%%%%%%%%%%%%%%%%%%%%%%%%%%%
\subsection{Proof of Lemma \ref{lem:ub9enb4w0s6a}}
\label{subsec:proof-ub9enb4w0s6a}
%%%%%%%%%%%%%%%%%%%%%%%%%%%%%%%

\begin{proof}[Proof of Lemma \ref{lem:ub9enb4w0s6a}]
(Proof of (i))
We consider the following three cases separately: the case $J \cong 0, a_0 \geq 1$, the case $J \cong \st, n \geq 1, a_0 = 0$, and the case $J < 0$.
\begin{itemize}
\item The case $J \cong 0, a_0 \geq 1$: We have
\begin{align*}
\lefteqn{\langle a_{n-1}, \ldots, a_0\rangle_{J} - \langle a_n, a_{n-1}, \ldots, a_0 \rangle_{J}}\\
&\eqlab{A}{=}  \left(\left(1 + \sum_{i = 0}^{n-1} a_i \right) \st + \sum_{i = 0}^{n-1} \left(1 + \sum_{j = i}^{n-1} a_j \right) \down^{i+1} - \down\right)
- \left(\left(1 + \sum_{i = 0}^n a_i \right) \st + \sum_{i = 0}^n \left(1 + \sum_{j = i}^n a_j \right) \down^{i+1} - \down\right)\\
&= a_n \st + \sum_{i = 0}^{n} a_n \up^{i+1} + \up^{n+1} \\
&\eqlab{B}{\not\leq} 0,
\end{align*}
where
(A) follows from the first case of (\ref{eq:xhzxoe0bbtgq}),
and (B) follows from Propositions \ref{prop:uptimal-comp} and \ref{prop:uptimal-star}.

\item The case $J \cong \st, n \geq 1, a_0 = 0$: We have
\begin{align*}
\lefteqn{\langle a_{n-1}, \ldots, a_0\rangle_{J} - \langle a_n, a_{n-1}, \ldots, a_0 \rangle_{J}}\\
&\eqlab{A}{=}  \left(\left(\sum_{i = 1}^{n-1} a_i \right) \st + \sum_{i = 1}^{n-1} \left(1 + \sum_{j = i}^{n-1} a_j \right) \down^{i}\right)
- \left(\left(\sum_{i = 1}^{n} a_i \right) \st + \sum_{i = 1}^{n} \left(1 + \sum_{j = i}^{n} a_j \right) \down^{i}\right)\\
&= a_n \st + \sum_{i = 1}^{n} a_n \up^{i} +  \up^{n} \\
&\eqlab{B}{\not\leq} 0,
\end{align*}
where
(A) follows from the second case of (\ref{eq:xhzxoe0bbtgq}),
and (B) follows from Propositions \ref{prop:uptimal-comp} and \ref{prop:uptimal-star}.

\item The case $J < 0$: We have
\begin{align*}
\lefteqn{\langle a_{n-1}, \ldots, a_0\rangle_{J} - \langle a_n, a_{n-1}, \ldots, a_0 \rangle_{J}}\\
&\eqlab{A}{=} \left(J +  \left(1+\sum_{i = 0}^{n-1} a_i \right) \st + \sum_{i = 1}^k \left(1+ \sum_{j = 0}^{n-1} a_j \right) \down^i + \sum_{i = 1}^{n-1} \left(1 + \sum_{j = i}^{n-1} a_j \right) \down^{k+i} \right)\\
&\quad - \left( J + \left(1+\sum_{i = 0}^n a_i \right) \st + \sum_{i = 1}^k \left(1+ \sum_{j = 0}^n a_j \right)  \down^i + \sum_{i = 1}^n \left(1 + \sum_{j = i}^n a_j \right) \down^{k+i} \right)\\
&= a_n \st + \sum_{i = 1}^{n+k} a_n \up ^i + \up^{n+k}\\
&\eqlab{B}{\not\leq} 0,
\end{align*}
where
(A) follows from the third case of (\ref{eq:xhzxoe0bbtgq}),
and (B) follows from Propositions \ref{prop:uptimal-comp} and \ref{prop:uptimal-star}.
\end{itemize}

(Proof of (ii))
We consider the following two cases separately: the case $J \cong 0, a_0 \geq 1$ and the case $J < 0$.
Note that the case $J \cong \st, n \geq 1, a_0 = 0$ is excluded by the assumption $n = 0$.
\begin{itemize}
\item The case $J \cong 0, a_0 \geq 1$: We have
\begin{align*}
J - \langle a_0 \rangle_{J}
\eqlab{A}{=}  0 - \left(\left(1 + a_0 \right) \st + \left(1 + a_0 \right) \down - \down\right)
= \left(1 + a_0 \right) \st + a_0 \up
\eqlab{B}{\not\leq} 0,
\end{align*}
where
(A) follows from the first case of (\ref{eq:xhzxoe0bbtgq}),
and (B) follows from Propositions \ref{prop:uptimal-comp} and \ref{prop:uptimal-star}.

\item The case $J < 0$: We have
\begin{align*}
J - \langle a_0\rangle_{J}
\eqlab{A}{=} J - \left( J + \left(1+ a_0\right) \st + \sum_{i = 1}^k \left(1+ a_0 \right)  \down^i\right)
=  \left(1+a_0 \right)\st + \sum_{i = 1}^k \left(1+ a_0 \right) \up^i
\eqlab{B}{\not\leq} 0,
\end{align*}
where
(A) follows from the third case of (\ref{eq:xhzxoe0bbtgq}),
and (B) follows from Propositions \ref{prop:uptimal-comp} and \ref{prop:uptimal-star}.
\end{itemize}
\end{proof}

%%%%%%%%%%%%%%%%%%%%%%%%%%%%%%%
\subsection{Proof of Lemma \ref{lem:3rp1dfrhr0nc}}
%%%%%%%%%%%%%%%%%%%%%%%%%%%%%%%

 \begin{proof}[Proof of Lemma \ref{lem:3rp1dfrhr0nc}]
 (Proof of (i))
 Choose a Left option $\langle a_n, a_{n-1}, \ldots, a_0 \rangle_{J}^L$ of $\langle a_n, a_{n-1}, \ldots, a_0 \rangle_{J}$ arbitrarily.
 We consider the following three cases separately: the case $J \cong 0, a_0 \geq 1$, the case $J \cong \st, n \geq 1, a_0 = 0$, and the case $J < 0$.
\begin{itemize}
\item The case $J \cong 0, a_0 \geq 1$: We have
\begin{align*}
\langle a_n, a_{n-1}, \ldots, a_0 \rangle_{J}^L
&\eqlab{A}\leq \langle a_n, a_{n-1}, \ldots, a_0 \rangle_{J} - \sum_{i = 1}^{\deg(\langle a_n, a_{n-1}, \ldots, a_0 \rangle_{J})} \down^i + \st\\
&\eqlab{B}= \left(1 + \sum_{i = 0}^n a_i \right) \st + \sum_{i = 0}^n \left(1 + \sum_{j = i}^n a_j \right) \down^{i+1} - \down - \sum_{i = 1}^{n+1} \down^i + \st \\
&= \left(\sum_{i = 0}^n a_i\right) \st + \sum_{i = 0}^n \left(\sum_{j = i}^n a_j \right) \down^{i+1} - \down\\
&\eqlab{C}=\left(\sum_{i = 0}^p a_i\right) \st + \sum_{i = 0}^n \left(\sum_{j = i}^p a_j \right) \down^{i+1} - \down\\
&\eqlab{D}=\left(\sum_{i = 0}^p a_i\right) \st + \sum_{i = 0}^p \left(\sum_{j = i}^p a_j \right) \down^{i+1} - \down\\
&=\left(1 + \sum_{i = 0}^{p-1} a_i + (a_p-1) \right) \st + \sum_{i = 0}^p \left(1 + \sum_{j = i}^{p-1} a_j  + (a_p-1) \right) \down^{i+1} - \down\\
&\eqlab{E}= \langle a_p-1, a_{p-1}, \ldots, a_0 \rangle_{J},
\end{align*}
where
(A) follows from Lemma \ref{lem:uptimal-option} (i) and (\ref{lem:seq-deg}),
(B) follows from the first case of (\ref{eq:xhzxoe0bbtgq}),
(C) follows from $a_n = a_{n-1} = \cdots = a_{p+1} = 0$,
(D) follows since $\sum_{j = i}^p a_j = 0$ for $i > p$,
and (E) follows from the first case of (\ref{eq:xhzxoe0bbtgq}).

\item The case $J \cong \st, n \geq 1$, $a_0 = 0$: We have
\begin{align*}
\langle a_n, a_{n-1}, \ldots, a_0 \rangle_{J}^L
&\eqlab{A}\leq \langle a_n, a_{n-1}, \ldots, a_0 \rangle_{J} - \sum_{i = 1}^{\deg(\langle a_n, a_{n-1}, \ldots, a_0 \rangle_{J})} \down^i + \st\\
&\eqlab{B}= \left(\sum_{i = 1}^n a_i \right) \st + \sum_{i = 1}^n \left(1 + \sum_{j = i}^n a_j \right) \down^i - \sum_{i = 1}^{n} \down^i + \st\\
&= \left(\sum_{i = 1}^n a_i - 1\right) \st + \sum_{i = 1}^n \left(\sum_{j = i}^n a_j \right) \down^i\\
&\eqlab{C}= \left(\sum_{i = 1}^p a_i - 1\right) \st + \sum_{i = 1}^n \left(\sum_{j = i}^p a_j \right) \down^i\\
&\eqlab{D}= \left(\sum_{i = 1}^p a_i - 1\right) \st + \sum_{i = 1}^p \left(\sum_{j = i}^p a_j \right) \down^i\\
&= \left(\sum_{i = 1}^{p-1} a_i + (a_p- 1)\right) \st + \sum_{i = 1}^p \left(\sum_{j = i}^{p-1} a_j + (a_p-1) \right) \down^i\\
&\eqlab{E}= \langle a_p-1, a_{p-1}, \ldots, a_0 \rangle_{J},
\end{align*}
where
(A) follows from Lemma \ref{lem:uptimal-option} (i) and (\ref{lem:seq-deg}),
(B) follows from the second case of (\ref{eq:xhzxoe0bbtgq}),
(C) follows from $a_n = a_{n-1} = \cdots = a_{p+1} = 0$,
(D) follows since $\sum_{j = i}^p a_j = 0$ for $i > p$,
and (E) follows from the second case of (\ref{eq:xhzxoe0bbtgq}).

\item The case $J < 0$: We have
\begin{align*}
\langle a_n, a_{n-1}, \ldots, a_0 \rangle_{J}^L
&\eqlab{A}\leq \langle a_n, a_{n-1}, \ldots, a_0 \rangle_{J} - \sum_{i = 1}^{\deg(\langle a_n, a_{n-1}, \ldots, a_0 \rangle_{J})} \down^i + \st\\
&\eqlab{B}=  J + \left(1+\sum_{i = 0}^n a_i \right) \st + \sum_{i = 1}^k \left(1+ \sum_{j = 0}^n a_j \right)  \down^i + \sum_{i = 1}^n \left(1 + \sum_{j = i}^n a_j \right) \down^{k+i} - \sum_{i = 1}^{n+k} \down^i + \st \\
&=  J + \left(\sum_{i = 0}^n a_i \right) \st + \sum_{i = 1}^k \left(\sum_{j = 0}^n a_j \right)  \down^i + \sum_{i = 1}^n \left(\sum_{j = i}^n a_j \right) \down^{k+i}\\
&\eqlab{C}=  J + \left(\sum_{i = 0}^p a_i \right) \st + \sum_{i = 1}^k \left(\sum_{j = 0}^p a_j \right)  \down^i + \sum_{i = 1}^n \left(\sum_{j = i}^p a_j \right) \down^{k+i}\\
&\eqlab{D}=  J + \left(\sum_{i = 0}^p a_i \right) \st + \sum_{i = 1}^k \left(\sum_{j = 0}^p a_j \right)  \down^i + \sum_{i = 1}^p \left(\sum_{j = i}^p a_j \right) \down^{k+i}\\
&\eqlab{E}{=} \langle a_p-1, a_{p-1}, \ldots, a_0 \rangle_{J},
\end{align*}
where
(A) follows from Lemma \ref{lem:uptimal-option} (i) and (\ref{lem:seq-deg}),
(B) follows from the third case of (\ref{eq:xhzxoe0bbtgq}),
(C) follows from $a_n = a_{n-1} = \cdots = a_{p+1} = 0$,
(D) follows since $\sum_{j = i}^p a_j = 0$ for $i > p$,
and (E) follows from the third case of (\ref{eq:xhzxoe0bbtgq}).
\end{itemize}

(Proof of (ii))
 Choose a Left option $\langle a_n, a_{n-1}, \ldots, a_0 \rangle_{J}^L$ of $\langle a_n, a_{n-1}, \ldots, a_0 \rangle_{J}$ arbitrarily.
  We consider the following two cases separately: the case $J \cong \st, n \geq 1, a_0 = 0$ and the case $J < 0$.
  Note that the case $J \cong 0, a_0 \geq 1$ is excluded by the assumption $a_n = a_{n-1} = \cdots = a_0 = 0$.
\begin{itemize}

\item The case $J \cong \st$, $n \geq 1$, $a_0 = 0$: We have
\begin{align*}
\langle a_n, a_{n-1}, \ldots, a_0 \rangle_{J}^L
&\eqlab{A}\leq \langle a_n, a_{n-1}, \ldots, a_0 \rangle_{J} - \sum_{i = 1}^{\deg(\langle a_n, a_{n-1}, \ldots, a_0 \rangle_{J})} \down^i + \st\\
&\eqlab{B}= \left(\sum_{i = 1}^n a_i \right) \st + \sum_{i = 1}^n \left(1 + \sum_{j = i}^n a_j \right) \down^i - \sum_{i = 1}^{n} \down^i + \st\\
&= \left(\sum_{i = 1}^n a_i + 1\right) \st + \sum_{i = 1}^n \left(\sum_{j = i}^n a_j \right) \down^i\\
&\eqlab{C}= \st\\
&= J,
\end{align*}
where
(A) follows from Lemma \ref{lem:uptimal-option} (i) and (\ref{lem:seq-deg}),
(B) follows from the second case of (\ref{eq:xhzxoe0bbtgq}),
and (C) follows from $a_n = a_{n-1} = \cdots = a_0 = 0$.

\item The case $J < 0$: We have
\begin{align*}
\langle a_n, a_{n-1}, \ldots, a_0 \rangle_{J}^L
&\eqlab{A}\leq \langle a_n, a_{n-1}, \ldots, a_0 \rangle_{J} - \sum_{i = 1}^{\deg(\langle a_n, a_{n-1}, \ldots, a_0 \rangle_{J})} \down^i + \st\\
&\eqlab{B}= J + \left(1+\sum_{i = 0}^n a_i \right) \st + \sum_{i = 1}^k \left(1+ \sum_{j = 0}^n a_j \right)  \down^i + \sum_{i = 1}^n \left(1 + \sum_{j = i}^n a_j \right) \down^{k+i}  - \sum_{i = 1}^{n+k} \down^i + \st\\
&\eqlab{C}= J + \st + \sum_{i = 1}^k \down^i + \sum_{i = 1}^n \down^{k+i}  - \sum_{i = 1}^{n+k} \down^i + \st\\
&= J,
\end{align*}
where
(A) follows from Lemma \ref{lem:uptimal-option} (i) and (\ref{lem:seq-deg}),
(B) follows from the third case of (\ref{eq:xhzxoe0bbtgq}),
and (C) follows from $a_n = a_{n-1} = \cdots = a_0 = 0$.
\end{itemize}
\end{proof}

%%%%%%%%%%%%%%%%%%%%%%%%%%%%%%%
\subsection{Proof of Lemma \ref{lem:q7g997f5n2b0}}
%%%%%%%%%%%%%%%%%%%%%%%%%%%%%%%

\begin{proof}[Proof of Lemma \ref{lem:q7g997f5n2b0}]
(Proof of (i))
  We consider the following three cases separately: the case $J \cong 0, a_0 \geq 1$, the case $J \cong \st, n \geq 1, a_0 = 0$, and the case $J < 0$.
\begin{itemize}
\item The case $J \cong 0, a_0 \geq 1$: We have
\begin{align*}
\lefteqn{\langle a_p-1, a_{p-1}, a_{p-2}, \ldots, a_0 \rangle_{J} - \langle \underbrace{0, \ldots, 0}_{n-p}, a_p, a_{p-1}, \ldots, a_0 \rangle_{J}}\\
&\eqlab{A}= \left(\left(\sum_{i = 0}^p a_i \right) \st + \sum_{i = 0}^p \left(\sum_{j = i}^p a_j \right) \down^{i+1} - \down\right)
 - \left(\left(1 + \sum_{i = 0}^p a_i \right) \st + \sum_{i = 0}^n \left(1 + \sum_{j = i}^p a_j \right) \down^{i+1} - \down\right)\\
&= \st + \sum_{i = 0}^p \up^{i+1} + \sum_{i = p+1}^n \left(1 + \sum_{j = i}^p a_j \right) \up^{i+1}\\
&\eqlab{B}= \st + \sum_{i = 0}^p \up^{i+1} + \sum_{i = p+1}^n \up^{i+1}\\
&= \st + \sum_{i = 1}^{n+1} \up^i\\
&\eqlab{C}\parallel 0,
\end{align*}
where
(A) follows from the first case of (\ref{eq:xhzxoe0bbtgq}),
(B) follows since $\sum_{j = i}^p a_j = 0$ for $i > p$,
and (C) follows from Proposition \ref{prop:uptimal-star}.

\item The case $J \cong \st, n \geq 1$, $a_0 = 0$: We have
\begin{align*}
\lefteqn{\langle a_p-1, a_{p-1}, a_{p-2}, \ldots, a_0 \rangle_{J} - \langle \underbrace{0, \ldots, 0}_{n-p}, a_p, a_{p-1}, \ldots, a_0 \rangle_{J}}\\
&\eqlab{A}= \left(\left(\sum_{i = 1}^p a_i - 1 \right) \st + \sum_{i = 1}^p \left(\sum_{j = i}^p a_j \right) \down^i \right)
 - \left(\left(\sum_{i = 1}^p a_i \right) \st + \sum_{i = 1}^n \left(1 + \sum_{j = i}^p a_j \right) \down^i \right)\\
&= \st + \sum_{i = 1}^p \up^i + \sum_{i = p+1}^n \left(1 + \sum_{j = i}^p a_j \right) \up^i\\
&\eqlab{B}= \st + \sum_{i = 1}^p \up^{i} + \sum_{i = p+1}^n \up^i\\
&= \st + \sum_{i = 1}^n \up^i\\
&\eqlab{C}\parallel 0,
\end{align*}
where
(A) follows from the second case of (\ref{eq:xhzxoe0bbtgq}),
(B) follows since $\sum_{j = i}^p a_j = 0$ for $i > p$,
and (C) follows from Proposition \ref{prop:uptimal-star}.

\item The case $J < 0$: We have
\begin{align*}
\lefteqn{\langle a_p-1, a_{p-1}, a_{p-2}, \ldots, a_0 \rangle_{J} - \langle \underbrace{0, \ldots, 0}_{n-p}, a_p, a_{p-1}, \ldots, a_0 \rangle_{J}}\\
&\eqlab{A}= \left(J + \left(\sum_{i = 0}^p a_i \right) \st + \sum_{i = 1}^k \left(\sum_{j = 0}^p a_j \right) \down^i + \sum_{i = 1}^p \left(\sum_{j = i}^p a_j \right) \down^{k+i}\right)\\
&\quad -\left(J + \left(1+\sum_{i = 0}^p a_i \right) \st + \sum_{i = 1}^k  \left(1+ \sum_{j = 0}^p a_j \right)  \down^i + \sum_{i = 1}^n \left(1 + \sum_{j = i}^p a_j \right) \down^{k+i}\right)\\
&= \st + \sum_{i = 1}^k \up^{i} + \sum_{i = 1}^p \up^{k+i} +  \sum_{i = p+1}^n \left(1 + \sum_{j = i}^p a_j \right) \up^{k+i}\\
&\eqlab{B}= \st + \sum_{i = 1}^k \up^{i} + \sum_{i = 1}^p \up^{k+i} +  \sum_{i = p+1}^n \up^{k+i}\\
&= \st +  \sum_{i = 1}^{k+n} \up^i\\
&\eqlab{C}\parallel 0,
\end{align*}
where
(A) follows from the third case of (\ref{eq:xhzxoe0bbtgq}),
(B) follows since $\sum_{j = i}^p a_j = 0$ for $i > p$,
and (C) follows from Proposition \ref{prop:uptimal-star}.
\end{itemize}

(Proof of (ii))
  We consider the following two cases separately: the case $J \cong \st, n \geq 1, a_0 = 0$, and the case $J < 0$.
  Note that the case $J \cong 0, a_0 \geq 1$ is excluded by the assumption $a_n = a_{n-1} = \cdots = a_0 = 0$.
  
\begin{itemize}
\item The case $J \cong \st, n \geq 1, a_0 = 0$: We have
\begin{align*}
J - \langle \underbrace{0, \ldots, 0}_{n+1} \rangle_{J}
\eqlab{A}= J + \sum_{i = 1}^n \up^i
= \st + \sum_{i = 1}^n \up^i
\eqlab{B}\parallel 0,
\end{align*}
where
(A) follows from the second case of (\ref{eq:xhzxoe0bbtgq}),
and (B) follows from Proposition \ref{prop:uptimal-star}.

\item The case $J < 0$: We have
\begin{align*}
J - \langle \underbrace{0, \ldots, 0}_{n+1} \rangle_{J}
\eqlab{A}= \st + \sum_{i = 1}^k \up^{i} + \sum_{i = 1}^n \up^{k+i}
= \st +  \sum_{i = 1}^{k+n} \up^i
\eqlab{B}\parallel 0,
\end{align*}
where
(A) follows from the third case of (\ref{eq:xhzxoe0bbtgq}),
and (B) follows from Proposition \ref{prop:uptimal-star}.
\end{itemize}
\end{proof}

%%%%%%%%%%%%%%%%%%%%%%%%%%%%%%%
\subsection{Proof of Lemma \ref{lem:e1xr1ufi5spj}}
\label{subsec:proof-e1xr1ufi5spj}
%%%%%%%%%%%%%%%%%%%%%%%%%%%%%%%

\begin{proof}[Proof of Lemma \ref{lem:e1xr1ufi5spj}]
(Proof of (i))
We consider the following three cases separately: the case $J \cong 0$, the case $J \cong \st$, and the case $J < 0$.
\begin{itemize}
\item The case $J \cong 0$: We have
\begin{align*}
\langle 0, a_{n-1}. \ldots, a_0 \rangle_{J}
&\eqlab{A}= \left(1 + \sum_{i = 0}^{n-1} a_i \right) \st + \sum_{i = 0}^n \left(1 + \sum_{j = i}^{n-1} a_j \right) \down^{i+1} - \down\\
&= \left(1 + \sum_{i = 0}^{n-1} a_i \right) \st + \sum_{i = 0}^{n-1} \left(1 + \sum_{j = i}^{n-1} a_j \right) \down^{i+1} - \down + \down^{n+1}\\
&\eqlab{B}= \langle a_{n-1}, \ldots, a_0 \rangle_{J} + \down^{n+1}\\
&= \langle a_{n-1}, \ldots, a_0 \rangle_{J} + \down^{d},
\end{align*}
where
(A) follows from the first case of (\ref{eq:xhzxoe0bbtgq}),
and (B) follows from the first case of (\ref{eq:xhzxoe0bbtgq}).

\item The case $J \cong \st$: We have
\begin{align*}
\langle 0, a_{n-1}, \ldots, a_0 \rangle_{J}
&\eqlab{A}= \left(\sum_{i = 1}^{n-1} a_i \right) \st + \sum_{i = 1}^{n} \left(1 + \sum_{j = i}^{n-1} a_j \right) \down^{i}\\
&= \left(\sum_{i = 1}^{n-1} a_i \right) \st + \sum_{i = 1}^{n-1} \left(1 + \sum_{j = i}^{n-1} a_j \right) \down^{i} + \down^n\\
&\eqlab{B}= \langle a_{n-1}, \ldots, a_0 \rangle_{J} + \down^{n}\\
&= \langle a_{n-1}, \ldots, a_0 \rangle_{J} + \down^{d},
\end{align*}
where
(A) follows from the second case of (\ref{eq:xhzxoe0bbtgq}),
and (B) follows from the second case of (\ref{eq:xhzxoe0bbtgq}).

\item The case $J < 0$: We have
\begin{align*}
\lefteqn{\langle 0, a_{n-1}, \ldots, a_0\rangle_{J}}\\
&\eqlab{A}{=} J + \left(1+\sum_{i = 0}^{n-1} a_i \right) \st + \sum_{i = 1}^k \left(1+ \sum_{j = 0}^{n-1} a_j \right)  \down^i + \sum_{i = 1}^n \left(1 + \sum_{j = i}^{n-1} a_j \right) \down^{k+i}\\
&= J + \left(1+\sum_{i = 0}^{n-1} a_i \right) \st + \sum_{i = 1}^k \left(1+ \sum_{j = 0}^{n-1} a_j \right)  \down^i + \sum_{i = 1}^{n-1} \left(1 + \sum_{j = i}^{n-1} a_j \right) \down^{k+i} + \left(1 + \sum_{j = n}^{n-1} a_j \right) \down^{k+n}\\
&\eqlab{B}= J + \left(1+\sum_{i = 0}^{n-1} a_i \right) \st + \sum_{i = 1}^k \left(1+ \sum_{j = 0}^{n-1} a_j \right)  \down^i + \sum_{i = 1}^{n-1} \left(1 + \sum_{j = i}^{n-1} a_j \right) \down^{k+i} + \down^{k+n}\\
&\eqlab{C}= \langle a_{n-1}, \ldots, a_0 \rangle_{J} + \down^{k+n}\\
&= \langle a_{n-1}, \ldots, a_0 \rangle_{J} + \down^{d},
\end{align*}
where
(A) follows from the third case of (\ref{eq:xhzxoe0bbtgq}),
(B) follows from $\sum_{j = n}^{n-1} a_j = 0$ since $n > (n-1)$,
and (C) follows from the third case of (\ref{eq:xhzxoe0bbtgq}).
\end{itemize}
\end{proof}

%%%%%%%%%%%%%%%%%%%%%%%%%%%%%%%
\subsection{Proof of Lemma \ref{lem:sirsnip1kywb}}
\label{subsec:proof-sirsnip1kywb}
%%%%%%%%%%%%%%%%%%%%%%%%%%%%%%%

\begin{proof}[Proof of Lemma \ref{lem:sirsnip1kywb}]
The sequence $(q'', p''_1, p''_2, \ldots, p''_d)$ is
\begin{eqnarray}
q'' &=& q', \nonumber\\
p''_i &=&  \begin{cases}
p'_i-1 &\,\,\text{if}\,\,1 \leq i \leq d',\\
0 &\,\,\text{if}\,\,d'+1 \leq i \leq d
\end{cases}
\quad\,\,\text{for}\,\, i = 1, 2, \ldots, d.
\label{eq:tyoty5gmj38s}
\end{eqnarray}

The condition (\ref{eq:4zvhmrup5k1l}) holds because
\begin{eqnarray*}
q'' \st + \sum_{i = 1}^{d} p''_i \down^i 
\eqlab{A}\cong q' \st + \sum_{i = 1}^{d} (p'_i-1) \down^i 
= q' \st + \sum_{i = 1}^{d} p'_i \down^i  - \sum_{i = 1}^{d'} \down^i
> q' \st + \sum_{i = 1}^{d} p'_i \down^i
\eqlab{B}\geq \langle a_n, a_{n-1}, \ldots, a_0\rangle_{J},
\end{eqnarray*}
where
(A) follows from (\ref{eq:tyoty5gmj38s}),
and (B) follows from (\ref{eq:4zvhmrup5k1l}).

The condition (\ref{eq:lpwhcjc3w1re}) holds because for any $i = 1 \ldots, d$, we have
\begin{eqnarray}
\label{eq:sutnlc773ymhf}
p_i \eqlab{A}\geq p'_i
\geq \begin{cases}
p'_i-1 &\,\,\text{if}\,\,1 \leq i \leq d',\\
0 &\,\,\text{if}\,\,d'+1 \leq i \leq d
\end{cases}
\eqlab{B}= p''_i ,
\end{eqnarray}
where (A) follows from (\ref{eq:lpwhcjc3w1re}),
and (B) follows from (\ref{eq:tyoty5gmj38s}).

The condition (\ref{eq:sl195b7g7f7t}) is confirmed as follows.
We have
\begin{equation}
p'_1 \eqlab{A}\geq p'_2 \eqlab{A}\geq \cdots \eqlab{A}\geq p'_{d'} \eqlab{B}\geq 1,
\end{equation}
where
(A)s follow from (\ref{eq:sl195b7g7f7t}),
and (B) follows from $d' = \deg(H')$.
This implies
\begin{eqnarray}
p'_1-1 \geq p'_2-1 \geq \cdots \geq p'_{d'}-1 \geq 0
&\eqlab{A}\implies& p''_1 \geq p''_2 \geq \cdots \geq p''_{d'} \geq  0 = p''_{d'+1} = \cdots = p''_d\nonumber\\
&\implies& p''_1 \geq p''_2 \geq \cdots \geq p''_{d} \label{eq:ahhsnkb31j0g}
\end{eqnarray}
as desired,
where (A) follows from (\ref{eq:tyoty5gmj38s}).
\end{proof}

%%%%%%%%%%%%%%%%%%%%%%%%%%%%%%%
\subsection{Proof of Lemma \ref{lem:qdnup6wlyl8k}}
\label{subsec:proof-qdnup6wlyl8k}
%%%%%%%%%%%%%%%%%%%%%%%%%%%%%%%

\begin{proof}[Proof of Lemma \ref{lem:qdnup6wlyl8k}]
We divide into the following two cases separately: the case $p'_{d'} \geq 2$ and the case $p'_{d'} = 1$.
\begin{itemize}
\item The case $p'_{d'} \geq 2$: Then the sequence $(q'', p''_1, p''_2, \ldots, p''_d)$ is
\begin{eqnarray}
q'' &=& q', \nonumber\\
p''_i &=&  \begin{cases}
p'_i-1 &\,\,\text{if}\,\,1 \leq i \leq d'-1,\\
p'_i-2 &\,\,\text{if}\,\,i = d',\\
0 &\,\,\text{if}\,\,d'+1 \leq i \leq d
\end{cases}
\quad\,\,\text{for}\,\, i = 1, 2, \ldots, d.
\label{eq:jp6kwljig7cg}
\end{eqnarray}

The condition (\ref{eq:4zvhmrup5k1l}) is confirmed as
\begin{eqnarray*}
q'' \st + \sum_{i = 1}^{d} p''_i \down^i
&\eqlab{A}\cong& q' \st + \sum_{i = 1}^{d'-1} (p'_i-1) \down^i + (p'-2) \down^{d'}\\
&=& q' \st + \sum_{i = 1}^{d'} p'_i \down^i - \left(\sum_{i = 1}^{d'-1} \down^i + 2 \down^{d'}\right)\\
&>& q' \st + \sum_{i = 1}^{d'} p'_i \down^i\\
&\eqlab{B}\geq& \langle a_n, a_{n-1}, \ldots, a_0 \rangle_{J},
\end{eqnarray*}
where 
(A) follows from (\ref{eq:jp6kwljig7cg}),
and (B) follows from (\ref{eq:4zvhmrup5k1l}).

The condition (\ref{eq:lpwhcjc3w1re}) holds because for any $i = 1, 2, \ldots, d$, we have
\begin{eqnarray*}
p_i \eqlab{A}\geq p'_i
\geq \begin{cases}
p'_i-1 &\,\,\text{if}\,\,1 \leq i \leq d'-1,\\
p'_i-2 &\,\,\text{if}\,\,i = d',\\
0 &\,\,\text{if}\,\,d'+1 \leq i \leq d
\end{cases}
\eqlab{B}= p''_i,
\end{eqnarray*}
where
(A) follows from (\ref{eq:lpwhcjc3w1re}),
and (B) follows from (\ref{eq:jp6kwljig7cg}).

The condition (\ref{eq:sl195b7g7f7t}) is confirmed as follows.
We have
\begin{equation}
p'_1 \eqlab{A}\geq p'_2 \eqlab{A} \geq \cdots \eqlab{A}\geq p'_{d'} \geq 2,
\end{equation}
where (A)s follow from (\ref{eq:sl195b7g7f7t}).
This implies
\begin{eqnarray*}
\lefteqn{p'_1-1 \geq p'_2-1 \geq \cdots \geq p'_{d'-1}-1 \geq p'_{d'}-1 \geq 1}\\
&\implies& p'_1-1 \geq p'_2-1 \geq \cdots \geq p'_{d'-1}-1 \geq p'_{d'}-2 \geq 0 \\
&\eqlab{A}\implies& p''_1 \geq p''_2 \geq \cdots \geq p''_{d'} \geq  0 = p''_{d'+1} = \cdots = p''_d\\
&\implies& p''_1 \geq p''_2 \geq \cdots \geq p''_{d}
\end{eqnarray*}
as desired,
where (A) follows from (\ref{eq:jp6kwljig7cg}).

\item  The case $p'_{d'} = 1$: Then the sequence $(q'', p''_1, p''_2, \ldots, p''_d)$ is
\begin{eqnarray}
q'' &=& q',\nonumber\\
p''_i &=&  \begin{cases}
p'_i-1 &\,\,\text{if}\,\,1 \leq i \leq d',\\
0 &\,\,\text{if}\,\,d'+1 \leq i \leq d
\end{cases}
\quad\,\,\text{for}\,\, i = 1, 2, \ldots, d. \label{eq:uv7sfqz8xddv}
\end{eqnarray}

The condition (\ref{eq:4zvhmrup5k1l}) is confirmed as
\begin{eqnarray*}
\lefteqn{\langle a_n, a_{n-1}, \ldots, a_0 \rangle_{J} - \left(q'' \st + \sum_{i = 1}^{d} p''_i \down^i \right)}\\
&\cong&  \left(q \st + \sum_{i = 1}^{d} p_i \down^i \right) - \left(q'' \st + \sum_{i = 1}^{d} p''_i \down^i \right)\\
&\eqlab{A}\cong&  \left(q \st + \sum_{i = 1}^{d} p_i \down^i \right) - \left(q' \st + \sum_{i = 1}^{d'} (p'_i-1) \down^i \right)\\
&=&  \left(q \st + \sum_{i = 1}^{d} p_i \down^i \right) - \left(q' \st + \sum_{i = 1}^{d'} p'_i \down^i \right)  + \down^{d'} + \sum_{i = 1}^{d'-1} \down^i + \st\\
&\eqlab{B}{\leq}&  \left(q \st + \sum_{i = 1}^{d'} p_i \down^i \right) - \left(q' \st + \sum_{i = 1}^{d'} p'_i \down^i \right)  + \down^{d'} + \sum_{i = 1}^{d'-1} \down^i + \st\\
&=& (q+q'+1) \st + \sum_{i = 1}^{d'-1}(p_i - p'_i) \down^i + (p_{d'} - p'_{d'} + 1) \down^{d'} +  \sum_{i = 1}^{d'-1} \down^i\\
&\eqlab{C}{\leq}& (q+q'+1) \st + p_{d'} \down^{d'} +  \sum_{i = 1}^{d'-1} \down^i\\
&\eqlab{D}\leq& (q+q'+1) \st + p_d \down^{d'} +  \sum_{i = 1}^{d'-1} \down^i\\
&\eqlab{E}{\leq}& (q+q'+1) \st + 2 \down^{d'} +  \sum_{i = 1}^{d'-1} \down^i\\
&\eqlab{F}{<}& 0,
\end{eqnarray*}
where
(A) follows from (\ref{eq:uv7sfqz8xddv}),
(B) follows from $d' \leq d$,
(C) follows from $p_i \geq p'_i$ and $p'_{d'} = 1$,
(D) follows since $p_{d'} \geq p_d$ by $d' \leq d$ and (\ref{eq:lpwhcjc3w1re}),
(E) follows from the assumption $p_d \geq 2$,
and (F) follows from Proposition \ref{prop:uptimal-star}.

The conditions (\ref{eq:lpwhcjc3w1re}) and  (\ref{eq:sl195b7g7f7t}) are confirmed as (\ref{eq:sutnlc773ymhf}) and (\ref{eq:ahhsnkb31j0g}), respectively.
\end{itemize}
\end{proof}

\bibliographystyle{plain}% bib style
\bibliography{arxiv}% your bib database

\end{document}